\title{Weak pattern convergence for SLOPE and its robust versions}
\author[1]{Ivan Hejný}
\author[1]{Jonas Wallin}
\author[1,2]{Małgorzata Bogdan}
\affil[1]{Department of Statistics, Lund University}
\affil[2]{Institute of Mathematics, University of Wroclaw}
\date{}
\titleformat*{\section}{\bfseries}
\titleformat*{\subsection}{\small\bfseries}
\newcommand\independent{\protect\mathpalette{\protect\independenT}{\perp}}
\def\independenT#1#2{\mathrel{\rlap{$#1#2$}\mkern2mu{#1#2}}}
\theoremstyle{plain}
\newtheorem{theorem}{Theorem}[section]
\newtheorem{proposition}[theorem]{Proposition}
\newtheorem{lemma}[theorem]{Lemma}
\newtheorem{corollary}[theorem]{Corollary}
\theoremstyle{definition}
\newtheorem{definition}[theorem]{Definition}
\newtheorem{example}[theorem]{Example}
\newtheorem{remark}[theorem]{Remark}
\begin{document}

\maketitle

\begin{abstract}

The Sorted L-One Estimator (SLOPE) is a popular regularization method in regression, which induces clustering of the estimated coefficients. That is, the estimator can have coefficients of identical magnitude. In this paper, we derive an asymptotic distribution of SLOPE for the ordinary least squares, Huber, and Quantile loss functions, and use it to study the clustering behavior in the limit. This requires a stronger type of convergence since clustering properties do not follow merely from the classical weak convergence. For this aim, we utilize the Hausdorff distance, which provides a suitable notion of convergence for the penalty subdifferentials and a bridge toward weak convergence of the clustering pattern. We establish asymptotic control of the false discovery rate for the asymptotic orthogonal design of the regressor. We also show how to extend the framework to a broader class of regularizers other than SLOPE. 

\end{abstract}

\section{Introduction}

Consider the linear model $y = X\beta^0 + \varepsilon\;\;,$
where  $X \in  \mathbb{R}^{n\times p}$ is the design matrix, $\beta^0 \in \mathbb{R}^p$ is the vector of regression coefficients and $\varepsilon \in \mathbb{R}^n$ is the random noise vector with independent identically distributed entries $\varepsilon_1,\ldots,\varepsilon_n$.

In case when $n>p$ and the columns of $X$ are linearly independent, the vector of regression coefficients $\beta^0$ can be estimated using the classical Ordinary Least Squares (OLS) approach. According to the Gauss-Markov Theorem, OLS estimator allows to obtain the minimal mean-squared error among all linear unbiased estimators. However, it is well known that this mean-squared error can be further improved by considering regularized biased estimators, like ridge regression \cite{ridge} or LASSO \cite{tibshirani1996regression}, which usually have a substantially smaller variance than OLS. This gain in estimation accuracy can be very large when the columns of $X$ are strongly correlated and/or when $p$ approaches $n$. Moreover, regularized estimators can be applied also when $p>n$. 

Ridge regression and LASSO are designed for different data-generating regimes. While ridge regression performs the best when the vector $\beta^0$ contains many nonzero elements (i.e. it is dense), LASSO can be much better when $\beta^0$ is sparse. This gain results from the non-linear LASSO shrinkage, which allows to obtain sparse estimators of $\beta^0$. The focus of this article, Sorted L-One Estimator (SLOPE, \cite{bogdan2015slope}), is a relatively new extension of LASSO, which allows for further dimensionality reduction and improvement of estimation properties.

Given a  vector of penalties $\boldsymbol{\lambda}=(\lambda_1,\ldots,\lambda_p)$ with $\lambda_1\geq \dots \geq \lambda_p \geq 0$, the SLOPE estimator of $\beta^0$ is given by the minimization problem
\begin{equation}
\label{eq:SLOPE}
    \boldsymbol{\hat{\beta}} = \underset{\beta\in\mathbb{R}^p}{\operatorname{argmin }}  \dfrac{1}{2}\Vert y-X\beta\Vert^2_{2} + J_{\boldsymbol{\lambda}}(\beta),
\end{equation}

where $J_{\boldsymbol{\lambda}}(\beta)$ is the SLOPE norm defined as 
$$J_{\boldsymbol{\lambda}}(\beta)=\sum_{i=1}^p \lambda_i \vert \beta\vert_{(i)}\;\;,$$
and $\vert \beta\vert_{(1)}\geq \vert \beta\vert_{(2)}\geq \vert \beta\vert_{(p)}$  are the absolute values of the coordinates of $\beta$ ordered in decreasing order of their magnitude. (see Bogdan et al. \cite{bogdan2015slope}).

SLOPE has some very appealing statistical properties. In the seminal works \cite{candes, tsybakov}, it is proved that SLOPE with a specific predefined sequence of tuning parameters $\lambda$ adapts to the unknown sparsity of the vector of regression coefficients $\beta^0$ and
obtains exact minimax estimation and prediction rates. Similar sequences of the tuning parameters yield the False Discovery Rate (FDR) control when the design matrix is orthogonal \cite{SLOPE1, bogdan2015slope}, or the asymptotic FDR control when the explanatory variables are independent \cite{Kosth,kos2020asymptotic}. The minimaxity results have been proved also for SLOPE extensions to the binary and multi-class classification \cite{felix, felix2, felix3}, identification of group of predictors \cite{grpSLOPE} or for the simultaneous  outlier detection and robust regression \cite{outliers} using the mean-shift model of \cite{meanshift1, meanshift2,ipod}. SLOPE was also investigated with the asymptotic theory of Approximate Message Passing Algorithms \cite{bu2019algorithmic,Maleki,Bu2}. While in \cite{Maleki} it is shown that in terms of estimation properties SLOPE does not systematically outperform LASSO 
or ridge regression,
the results in \cite{Bu2} illustrate that in terms of feature selection SLOPE has qualitatively different properties from LASSO. Specifically, SLOPE can achieve the full power of signal recovery  for the moderately sparse vectors of regression coefficients, when the power of LASSO is restricted. Also, in \cite{Bu2} it is shown that for any problem instance, there exists a choice of the tuning parameters such that SLOPE outperforms LASSO, both with respect to model selection and estimation properties. 

These superior model selection properties of SLOPE result from its ability to cluster similar values of estimated regression coefficients. Due to this property, SLOPE can efficiently exploit the situation when some parameters are equal to each other. This allows to reduce the number of estimated parameters and  enhances the speed and accuracy of model fitting. A prominent illustration of advantages resulting from such a  model reduction is provided by the Convolutional Neural Networks, where the ``parameter sharing'' has allowed to ``dramatically lower the number of unique
model parameters and to significantly increase network sizes without requiring a
corresponding increase in training data'' \cite{DLbook}. In the classical statistics setup identification of the parameter sharing patterns is very beneficial, e.g., in the context of clustering similar levels of multi-valued categorical predictors \cite{DMR, SCOPE, Pokarowski}. Also, clustering properties of SLOPE were crucial for attaining good performance in portfolio optimization and tracking of financial indices  \cite{portfolio, index}.

The ability of identification of the parameters tying patterns by SLOPE has been discussed in several articles \cite{Oscar, OWL2, index, schneider2022geometry, orthogonal}.  Specifically, in \cite{schneider2022geometry} the SLOPE clustering properties are related to the properties of the subdifferential of the SLOPE norm and the SLOPE pattern of the vector $u \in \mathbb{R}^p$ is defined by providing the characteristics which are shared by all vectors in $\mathbb{R}^p$ for which this subdifferential attains the same value as for $u$. These characteristics can be concisely described in the following definition:
\begin{definition}
A SLOPE pattern is the integer valued function $\mathbf{patt}:\mathbb{R}^p\rightarrow\mathbb{Z}^p$ given by
\begin{equation*}
     \mathbf{patt}(u)_i:=rank(\vert u_i\vert)sgn(u_i),
\end{equation*}
    where $rank(\vert u_i\vert)$ is the rank of the distinct values of $|u|$; (see \cite{orthogonal,bogdan2022pattern}). 
\end{definition}

Thus, the SLOPE pattern of the vector $u$ contains the information on the sign of its elements and the ranking of their absolute values, including the information about clusters of coordinates with the same absolute values.

In a recent article \cite{bogdan2022pattern}  a sufficient and necessary condition for the pattern recovery by SLOPE is provided.  This condition specifies the patterns which can be recovered by SLOPE for a given design matrix $X$ and when the magnitude of regression coefficients is large enough. The result was further applied to characterize the asymptotic probability of the pattern recovery in the classical asymptotic setup, when the number of features $p$ is fixed and the number of examples $n$ tends to infinity. 

In the present paper we substantially expand knowledge on the asymptotic properties of SLOPE and its ability for the pattern recovery and FDR control. Firstly, we extend the asymptotic results of Knight and Fu \cite{fu2000asymptotics} on Lasso type estimators and derive the limiting distribution of $\sqrt{n}(\boldsymbol{\hat{\beta}}_n - \beta^0)$, as a solution to a regularized random least squares problem. We also observe that, crucially, for a general regularizer, weak convergence of $\sqrt{n}(\boldsymbol{\hat{\beta}}_n - \beta^0 )$ does not guarantee weak convergence of $\mathbf{patt}(\sqrt{n}(\boldsymbol{\hat{\beta}}_n - \beta^0 ))$, as demonstrated in Example \ref{counterexample pattern convergence} in the Appendix. However, we establish that for the SLOPE estimator this is in fact so. This is achieved by utilizing the Hausdorff distance \cite{rockafellar2009variational}, which provides a suitable notion of convergence for subdifferentials and a bridge toward weak pattern convergence.

Further, building on the ideas of Pollard \cite{pollard_1985}, we investigate the limiting distribution of SLOPE, where we replace the $L_2$ norm in \eqref{eq:SLOPE} with a more general loss $\sum_{i=1}^n g(y_i, X_i,\beta)$. We formulate general conditions on $g$ under which we can derive the limiting distribution and establish that these conditions are satisfied for two important loss functions, namely the (robust) Huber loss \cite{Huber1964} and the quantile loss \cite{koenker1978regression}. With this result, we are also able to asymptotically  control the FDR for the two aforementioned losses under 
independent regressors 
by fitting the correct nuisance parameters.

\subsection{Overview}

In section 2., we give formulas for the limiting distribution of $\boldsymbol{\hat{\beta}}_n$, when $\boldsymbol{\lambda}^n\rightarrow \boldsymbol{\lambda}\geq 0$ (Theorem \ref{consistency theorem}) and for the limiting distribution of $\sqrt{n}(\boldsymbol{\hat{\beta}}_n - \beta^0 )$, when $\boldsymbol{\lambda}^n/\sqrt{n}\rightarrow \boldsymbol{\lambda}\geq 0$ (Theorem \ref{sqrt-asymptotic}). In section 3., we study the interplay between the subdifferential and pattern of SLOPE and utilize the Hausdorff distance in Proposition \ref{key subdifferential proposition} on the convergence of SLOPE penalty subdifferentials. Section 4. forms the backbone of the paper. In Theorem \ref{theorem pattern}, we prove weak pattern convergence of $\sqrt{n}(\boldsymbol{\hat{\beta}}_n - \beta^0 )$ and use this in Corollary \ref{recovery of the true signal} to provide an alternative proof for the probability of true pattern recovery (Theorem 4.1.i)\cite{bogdan2022pattern}). 
Weak pattern convergence leads directly to the asymptotic FDR control under independence in Example \ref{example FDR control}. We end the section with a discussion \ref{General penalty and pattern}, on how the weak pattern convergence extends to a broader family of regularizers other than SLOPE. In section 5., we prove Theorem \ref{Pollards asymptotics with penalty}, the main result of the paper, which extends the results from Section 4 to other loss functions, including the Huber loss and the Quantile loss. 

\section{Asymptotic distribution of SLOPE for the standard loss }

 Consider the linear model $y = X\beta^0 + \varepsilon$, with $X=(X_1,..,X_n)^{T}$, where $X_1, X_2, \dots$ are i.i.d. centered random vectors in $\mathbb{R}^p$ with covariance matrix $C$. Further assume $\varepsilon_1, \varepsilon_2, \dots$ are i.i.d. centered random variables with variance $\sigma^2$ and $X\independent\varepsilon$.  In this section, we want to give exact formulas for the low-dimensional asymptotics with fixed $p$ and $n\rightarrow\infty$, of the SLOPE estimator for the quadratic loss:
 \begin{align}\label{SLOPE-estimator_n}
     \boldsymbol{\hat{\beta}}_n &= \underset{\beta\in\mathbb{R}^p}{\operatorname{argmin }}  \dfrac{1}{2}\Vert y-X\beta\Vert^2_{2} + J_{\boldsymbol{\lambda}^n}(\beta)\\
     &=\underset{\beta\in\mathbb{R}^p}{\operatorname{argmin }}\dfrac{1}{2}(\beta-\beta^0)^{T}X^{T}X(\beta-\beta^0)+(\beta-\beta^0)^{T}X^{T}\varepsilon+\Vert\varepsilon\Vert_{2}^2/2 + J_{\boldsymbol{\lambda}^n}(\beta)\label{expanded slope estimator}
 \end{align}
 Note that by the LLN and CLT:
 \begin{equation}\label{W_n C_n convergence}
     C_n:=\dfrac{1}{n}X^{T}X\overset{a.s.}{\longrightarrow} C\hspace{0.4cm}\text{ and }\hspace{0.4cm} W_n:=\dfrac{1}{\sqrt{n}}X^{T}\varepsilon\overset{d}{\longrightarrow} W\sim\mathcal{N}(0,\sigma^2C),
 \end{equation}
 and $\Vert
 \varepsilon\Vert^2_2/n\overset{a.s.}{\longrightarrow}\sigma^2$.  The following statements and proofs are direct extensions of the results in \cite{fu2000asymptotics}. For completeness we include the proofs.
 
\begin{theorem}\label{consistency theorem}
If $\boldsymbol{\lambda}^n/n\rightarrow\boldsymbol{\lambda}\geq0$ and $C\succ0$, then $\boldsymbol{\hat{\beta}}_n\overset{p}{\longrightarrow}\textup{argmin}_{\beta} Z(\beta)$, where
\begin{equation*}
    Z(\beta)=\dfrac{1}{2}(\beta-\beta^0)^{T}C(\beta-\beta^0)+J_{\boldsymbol{\lambda}}(\beta)
\end{equation*}
In particular, in the case $\boldsymbol{\lambda}=0$, we have consistency $\boldsymbol{\hat{\beta}}_n\overset{p}{\longrightarrow}\beta^0$.
\end{theorem}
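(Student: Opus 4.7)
The plan is to follow the standard convex-$M$-estimator argument of Knight and Fu, dividing the objective by $n$ so that the scaling matches the stated limit. Concretely, I would define
\begin{equation*}
    Z_n(\beta) := \tfrac{1}{2}(\beta-\beta^0)^{T}C_n(\beta-\beta^0) + \tfrac{1}{\sqrt{n}}(\beta-\beta^0)^{T}W_n + \tfrac{1}{n}J_{\boldsymbol{\lambda}^n}(\beta),
\end{equation*}
which, up to the $\beta$-independent constant $\|\varepsilon\|_2^2/(2n)$, equals $\frac{1}{n}$ times the SLOPE objective in \eqref{expanded slope estimator}. Hence $\boldsymbol{\hat{\beta}}_n = \operatorname{argmin}_\beta Z_n(\beta)$.

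Next I would establish pointwise convergence $Z_n(\beta)\overset{p}{\to}Z(\beta)$ for each fixed $\beta\in\mathbb{R}^p$. The quadratic term converges a.s.\ by \eqref{W_n C_n convergence}; the cross term is $O_p(1/\sqrt{n})$ since $W_n$ is tight by the CLT; and the penalty term converges deterministically because $J_{\boldsymbol{\lambda}^n}(\beta)/n = J_{\boldsymbol{\lambda}^n/n}(\beta)\to J_{\boldsymbol{\lambda}}(\beta)$ by continuity of $J$ in the weights (it is just a nonnegative linear combination of the ordered absolute values of $\beta$). The constant $\|\varepsilon\|_2^2/(2n)\to\sigma^2/2$ a.s.\ drops out when taking argmins.

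The main (but routine) obstacle is upgrading pointwise convergence to something strong enough to yield convergence of argmins. For this I would invoke the convexity lemma due to Geyer and Pollard (also appearing in Knight--Fu \cite{fu2000asymptotics}): if a sequence of convex random functions converges in probability pointwise on a dense subset of $\mathbb{R}^p$ to a finite convex function, the convergence is in fact uniform on every compact set. Both $Z_n$ (a sum of the convex quadratic form, a linear term, and the convex SLOPE norm) and $Z$ are convex, so this applies.

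Finally, uniqueness of the limiting argmin must be checked to conclude $\boldsymbol{\hat{\beta}}_n\overset{p}{\to}\operatorname{argmin}_\beta Z(\beta)$. Because $C\succ 0$, the function $Z$ is strictly convex and coercive, so it has a unique minimizer, and standard argmin--argmax continuity (e.g.\ coupling uniform convergence on compacts with coercivity to rule out escape to infinity) yields the claimed convergence in probability. The special case $\boldsymbol{\lambda}=0$ reduces $Z$ to a strictly convex quadratic minimized exactly at $\beta^0$, giving consistency.
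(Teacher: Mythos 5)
Your proposal is correct and follows essentially the same route as the paper: rescale the SLOPE objective by $1/n$, establish pointwise convergence in probability (with the penalty term $J_{\boldsymbol{\lambda}^n}(\beta)/n\to J_{\boldsymbol{\lambda}}(\beta)$ and the noise cross term vanishing), invoke the convexity lemma to upgrade to uniform convergence on compacts, and use uniqueness of the minimizer of $Z$ from $C\succ0$ to conclude convergence of argmins. The only cosmetic differences are that the paper keeps the constant $\Vert\varepsilon\Vert_2^2/(2n)$ inside $Z_n$ (so the limit is $Z+\sigma^2/2$) and delegates the argmin-convergence step to the cited references, whereas you spell out the coercivity/tightness details explicitly.
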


\begin{proof}
Consider
\begin{equation*}
   Z_n(\beta):=\dfrac{1}{2n}\Vert y-X\beta\Vert^2_{2}+\dfrac{1}{n}J_{\boldsymbol{\lambda}^n}(\beta)
\end{equation*}
Note that $\boldsymbol{\hat{\beta}}_n = {\operatorname{argmin }} Z_n(\beta)$. From (\ref{expanded slope estimator}) and (\ref{W_n C_n convergence}), we get that for every $\beta\in\mathbb{R}^p$:
\begin{equation*}
    Z_n(\beta)\overset{p}{\longrightarrow} Z(\beta)+\sigma^2/2.
\end{equation*}
 The pointwise convergence together with convexity of $Z_n$'s implies uniform convergence on compact subsets; i.e., $\text{sup}_{\beta\in K}\vert Z_n(\beta)-Z(\beta)-\sigma^2/2\vert\overset{p}{\longrightarrow}0$ for every compact $K$. Moreover, positive definiteness of $C$ guarantees that the minimizer of $Z(\beta)$ is unique. It follows that $\boldsymbol{\hat{\beta}}_n = {\operatorname{argmin }} Z_n(\beta)\overset{p}{\longrightarrow}{\operatorname{argmin }}Z(\beta)$. We refer the reader to \cite{andersen1982cox} and \cite{pollard1991asymptotics} for the general convergence results. 
\end{proof}
Let $J'_{\boldsymbol{\lambda}}({\beta^0};u)$ denote the directional derivative of the SLOPE norm at $\beta^0$ in direction $u$. 
\begin{theorem}\label{sqrt-asymptotic}
If $\boldsymbol{\lambda}^n/\sqrt{n}\rightarrow\boldsymbol{\lambda}\geq0$ and $C\succ0$, then $\boldsymbol{\hat{u}}_n:= \sqrt{n}(\boldsymbol{\hat{\beta}}_n-\beta^0)\overset{d}{\longrightarrow}\boldsymbol{\hat{u}}$, where
\begin{align}
    \boldsymbol{\hat{u}}&:=\textup{argmin}_{u} V(u),\nonumber\\
    V(u) &= \dfrac{1}{2}u^{T}Cu-u^{T}W+ J'_{\boldsymbol{\lambda}}({\beta^0};u)\label{V(u)},
\end{align}
with $W\sim\mathcal{N}(0,\sigma^2C)$. Moreover, the directional derivative is given by
\begin{equation}
    J'_{\boldsymbol{\lambda}}({\beta^0};u)=\sum\limits_{i=1}^p\lambda_{\pi(i)}\left[u_i sgn(\beta^0_i)\mathbb{I}[\beta^0_i\neq0]+\vert u_i\vert\mathbb{I}[\beta^0_i=0]\right]\label{directional SLOPE derivative},
\end{equation}

 where $\pi$ is a permutation which sorts the vector $\vert(\beta^0+u/\sqrt{n})\vert$ for all sufficiently large $n$; i.e. $\exists M>0$ s.t. $\forall n\geq M$ $ \vert (\beta^0+u/\sqrt{n})\vert_{\pi^{-1}(1)}\geq...\geq\vert (\beta^0+u/\sqrt{n})\vert_{\pi^{-1}(p)}$. 
\end{theorem}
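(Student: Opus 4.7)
The plan is to follow the classical Knight--Fu strategy, rescaling around $\beta^0$ to obtain a sequence of convex random objective functions whose argmins are exactly $\boldsymbol{\hat{u}}_n$, and then applying the convex argmin convergence theorem of Geyer/Pollard. Define
\begin{equation*}
V_n(u):=\tfrac{1}{2}u^{T}C_n u - u^{T}W_n + J_{\boldsymbol{\lambda}^n}(\beta^0 + u/\sqrt{n}) - J_{\boldsymbol{\lambda}^n}(\beta^0).
\end{equation*}
By subtracting the constant $Q_n(\beta^0)$ from the objective in \eqref{expanded slope estimator} and rescaling $\beta = \beta^0 + u/\sqrt{n}$, one sees directly that $\boldsymbol{\hat{u}}_n = \operatorname{argmin}_u V_n(u)$. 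Each $V_n$ is convex (sum of quadratic, linear and a convex norm-term), and the target $V$ is strictly convex because $C\succ 0$ and $J'_{\boldsymbol{\lambda}}(\beta^0;\cdot)$ is convex, so $V$ admits a unique minimizer and is coercive. Hence it suffices to show $V_n(u)\overset{d}{\longrightarrow}V(u)$ pointwise (and jointly in finitely many $u$), and the convexity lemma then yields $\operatorname{argmin}V_n \overset{d}{\longrightarrow}\operatorname{argmin}V$.

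The quadratic and linear parts are immediate from \eqref{W_n C_n convergence}: $\tfrac{1}{2}u^{T}C_n u \to \tfrac{1}{2}u^{T}Cu$ almost surely and $u^{T}W_n \overset{d}{\longrightarrow}u^{T}W$, which combine to a joint weak convergence by Slutsky. The real work is to establish the formula \eqref{directional SLOPE derivative} for the penalty increment. Here I would first argue that for each fixed $u$ the ordering of $|\beta^0+u/\sqrt{n}|$ stabilizes: any strict inequality $|\beta^0_i|>|\beta^0_j|$ is preserved for all large $n$, and any remaining tie $|\beta^0_i|=|\beta^0_j|$ is eventually broken (or persists for all large $n$) by the perturbation $u/\sqrt{n}$. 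This yields a permutation $\pi=\pi(u)$ that sorts $|\beta^0+u/\sqrt{n}|$ for all $n\geq M(u)$.

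With this $\pi$ in hand, expand
\begin{equation*}
J_{\boldsymbol{\lambda}^n}(\beta^0 + u/\sqrt{n}) - J_{\boldsymbol{\lambda}^n}(\beta^0)
= \sum_{i=1}^p \lambda^n_{\pi(i)}\bigl(|\beta^0_i + u_i/\sqrt{n}| - |\beta^0_i|\bigr),
\end{equation*}
splitting the sum by whether $\beta^0_i=0$ or not: on the nonzero coordinates the absolute value is differentiable and the increment equals $\operatorname{sgn}(\beta^0_i)u_i/\sqrt{n}$ for $n$ large, while on the zero coordinates it equals $|u_i|/\sqrt{n}$. Factoring out $1/\sqrt{n}$ and using $\boldsymbol{\lambda}^n/\sqrt{n}\to\boldsymbol{\lambda}$ produces the right-hand side of \eqref{directional SLOPE derivative}. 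One should also note that the formula is invariant under any valid choice of $\pi$ on a persistent tie, because if $|u_i|=|u_j|$ the contributions of coordinates $i,j$ sum to $(\lambda_{\pi(i)}+\lambda_{\pi(j)})|u_i|$ regardless of how the two ranks are assigned, so ambiguity of $\pi$ does not affect $V(u)$.

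The main obstacle I anticipate is this bookkeeping of the permutation $\pi$ together with the fact that the convergence $W_n\overset{d}{\longrightarrow}W$ is only in distribution, not in probability, so the argmin step cannot be done pathwise. This is handled cleanly by the convexity-based argmin theorem (the argument in Geyer 1994 / Knight--Fu): pointwise weak convergence of a sequence of convex random functions to a convex limit with a unique minimizer implies weak convergence of the minimizers. Plugging in the pointwise limit established above and invoking uniqueness of $\operatorname{argmin}V$ gives $\boldsymbol{\hat{u}}_n\overset{d}{\longrightarrow}\boldsymbol{\hat{u}}$, completing the proof.
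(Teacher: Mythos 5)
Your proposal is correct and follows essentially the same route as the paper: rescale to the convex objectives $V_n$, verify the stochastic part via \eqref{W_n C_n convergence} and Slutsky, establish the penalty-increment limit \eqref{directional SLOPE derivative} through the eventually-stabilizing sorting permutation $\pi$ (noting, as you implicitly do, that this $\pi$ also sorts $\vert\beta^0\vert$), and conclude by the convexity argmin theorem with uniqueness of the minimizer of $V$.
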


\begin{proof}
Consider
\begin{equation}\label{V_n(u)}
     V_n(u):=\dfrac{1}{2n}u^{T}X^{T}Xu-u^{T}\dfrac{1}{\sqrt{n}} X^{T}\varepsilon+J_{\boldsymbol{\lambda}^n}(\beta^0+u/\sqrt{n})-J_{\boldsymbol{\lambda}^n}(\beta^0).
\end{equation}
By substituting $u=\sqrt{n}(\beta-\beta^0)$ it follows that $\text{argmin} V_n(u)=\sqrt{n}(\boldsymbol{\hat{\beta}}_n-\beta^0)$. The stochastic part of $V_n(u)$ converges by (\ref{W_n C_n convergence}) and Slutsky:
\begin{equation}\label{C and W notation}
\dfrac{1}{2n}u^{T}X^{T}Xu-u^{T}\dfrac{1}{\sqrt{n}} X^{T}\varepsilon\overset{d}{\longrightarrow}\dfrac{1}{2}u^{T}Cu-u^{T}W.
\end{equation}
Moreover, for a fixed $u\in\mathbb{R}^p$ there exists a permutation $\pi$, which sorts $\vert \beta^0+u/\sqrt{n}\vert$ for all sufficiently large $n$, i.e. $\exists M>0: \forall n\geq M: \vert (\beta^0+u/\sqrt{n})\vert_{\pi^{-1}(1)}\geq...\geq\vert (\beta^0+u/\sqrt{n})\vert_{\pi^{-1}(p)}$. At the same time we have $\vert \beta^0\vert_{\pi^{-1}(1)}\geq...\geq\vert \beta^0\vert_{\pi^{-1}(p)}$. Consequently
\begin{align}\label{J_lambda convergence}
    J_{\boldsymbol{\lambda}^n}(\beta^0+u/\sqrt{n})-J_{\boldsymbol{\lambda}^n}(\beta^0)
    &= \sum\limits_{j=1}^p\boldsymbol{\lambda}^n_j\left[\vert \beta^0+u/\sqrt{n}\vert_{\pi^{-1}(j)}-\vert \beta^0\vert_{\pi^{-1}(j)}\right]\nonumber\\
    & = \sum\limits_{i=1}^p\boldsymbol{\lambda}^n_{\pi(i)}\left[\vert \beta^0_i+u_i/\sqrt{n}\vert-\vert \beta^0_i\vert\right]\nonumber\\
    &=\sum\limits_{i=1}^p\boldsymbol{\lambda}^n_{\pi(i)}\left[(u_i/\sqrt{n})sgn(\beta^0_i) \mathbb{I}[\beta^0_i\neq0]+(\vert u_i\vert/\sqrt{n})\mathbb{I}[\beta^0_i=0]\right]\nonumber\\
    & \overset{}{\longrightarrow} \sum\limits_{i=1}^p\boldsymbol{\lambda}_{\pi(i)}\left[u_i sgn(\beta^0_i)\mathbb{I}[\beta^0_i\neq0]+\vert u_i\vert\mathbb{I}[\beta^0_i=0]\right],
\end{align}
which shows that $V_n(u)\overset{d}{\longrightarrow}V(u)$. Again, by convexity and uniqueness of the minimizer of $V(u)$, we obtain $\sqrt{n}(\boldsymbol{\hat{\beta}}_n -\beta^0)= {\operatorname{argmin }} V_n(u)\overset{d}{\longrightarrow}{\operatorname{argmin }}V(u)$.
\end{proof}

\begin{remark}
    Observe that the sum appearing in (\ref{V(u)}), is exactly the directional derivative of $J_{\boldsymbol{\lambda}}$ at the point $\beta^0$ in direction $u$, hence the formulation $J'_{\boldsymbol{\lambda}}(\beta^0,u)$. As such, $u\mapsto J'_{\boldsymbol{\lambda}}(\beta^0;u)$ is itself convex and satisfies  $\partial J'_{\boldsymbol{\lambda}}(\beta^0;u)\subset \partial J_{\boldsymbol{\lambda}}(\beta^0)$ for all $u\in\mathbb{R}^p$. Note further that the result in Theorem \ref{sqrt-asymptotic} is valid not only for the SLOPE penalty, but more generally by replacing the sequence of regularizers $J_{\boldsymbol{\lambda}^n}$ with $n^{1/2}f$ for any convex penalty $f$. Finally, we remark that we can use proximal methods to solve the optimization problem (\ref{V(u)}). The proximal operator of the directional SLOPE derivative is described in the Appendix \ref{proximal operator}. We also refer the reader to \cite{larsson2022coordinate}, where the directional derivative of the SLOPE penalty is used for a coordinate descent algorithm for SLOPE.
    \end{remark}

Furthermore, the estimator $\boldsymbol{\hat{u}}$ minimizes $V(u)$ and satisfies the optimality condition 
\begin{equation}\label{main optimality condition}
    0\in Cu-W +\partial J'_{\boldsymbol{\lambda}}({\beta^0};u),
\end{equation}
which yields
\begin{align*}
    u & \in C^{-1}(W - \partial J'_{\boldsymbol{\lambda}}({\beta^0};u))\subset C^{-1}(W - \partial J_{\boldsymbol{\lambda}}(\beta^0)).
\end{align*}
It follows that $\boldsymbol{\hat{u}} = C^{-1}(W - v)$, for some $v=v_W\in\partial J_{\boldsymbol{\lambda}}(\beta^0)$. Although the exact value of $v=v_W$ depends on $W$, we can gain some insight into the structure of $v$ merely from the fact that $v\in\partial J_{\boldsymbol{\lambda}}(\beta^0)$. As we shall see, the subdifferential $\partial J_{\boldsymbol{\lambda}}(\beta^0)$ respects the signs of $\beta^0$ and we have $sgn(v_i)=sgn(\beta^0_i)$, $\forall i$ such that $\beta^0_i\neq 0$. Consequently, sampling $C\boldsymbol{\hat{u}}$ amounts to sampling $W\sim\mathcal{N}(0,\sigma^2C)$ and then pushing each component in the opposite direction of $sgn(\beta^0_i)$. The total magnitude of the push $v$ satisfies $\sum_{i=1}^p\vert v_i\vert\leq\sum_{i=1}^p \boldsymbol{\lambda}_i$. (In fact, the total magnitude of $v$ on the non-zero clusters of $\beta^0_i$ is given by a corresponding sum of $\boldsymbol{\lambda}_i$'s ). We can think of the push by $v$ as introducing a bias, which pushes $W$ in the opposite direction of $sgn(\beta^0)$. In the special case of LASSO estimator, $\boldsymbol{\lambda}_1=\dots=\boldsymbol{\lambda}_p=c>0$ and on the non-zero entries of $\beta^0$, the bias is given by $\vert v_i \vert =c$. In case $\boldsymbol{\lambda}=0$, the subdifferential vanishes, hence $v=0$ and $\boldsymbol{\hat{u}}\sim\mathcal{N}(0,\sigma^2C^{-1})$. 

\section{Subdifferential and Pattern}
In this section we investigate a ``duality'' between the SLOPE pattern and its subdifferential. 
The subdifferential of the SLOPE norm has already been explored in \cite{bu2019algorithmic,bu2020algorithmic,larsson2020strong,tardivel2020simple}. For clarity of exposition, we provide a concise derivation of the subdifferential formula from \cite{bu2019algorithmic} in terms of symmetries of the pattern (\ref{subdifferential as convex hull}). 
In subsection \ref{Hausdorff distance and directional SLOPE derivative}, we make use of the Hausdorff distance, a suitable notion for the convergence of SLOPE subdifferentials.

Note that pattern induces an equivalence relation on $\mathbb{R}^p$; $u\sim v\iff \mathbf{patt}(u) = \mathbf{patt}(v)$. The information contained in a pattern $\mathfrak{p}=\mathbf{patt}(u) \in \mathfrak{P}$ is threefold: First, it captures the information about the partition of $\{1,..,p\}$ into clusters $\mathcal{I}(\mathfrak{p})=\mathcal{I}(u)=\{I_0,I_1,..,I_m\}$. Second, it preserves the ranking of the respective clusters, with $I_0$ corresponding to the 0-cluster and $I_m$ corresponding to the cluster of the greatest magnitude. And third, it preserves the signs of $u_i$. The full information of a pattern $\mathfrak{p}$ is captured by the ordered partition $\left(I_0, I_1^+, I_1^{-},...,I_m^{+}, I_m^{-}\right)$, where $I_j^{+}=\{i\in I_j: u_i>0\}$ and $I_j^{-}=\{i\in I_j: u_i<0\}$. 

\begin{remark} 
A crucial property of pattern is its close connection to the subdifferential, which was extensively studied in \cite{schneider2022geometry}.
In fact,
\begin{equation}\label{pattern-subdifferential}
    \mathbf{patt}(u)=\mathbf{patt}(v)\implies \partial J_{\boldsymbol{\lambda}}(u)=\partial J_{\boldsymbol{\lambda}}(v)\hspace{0,3cm} \forall u,v\in\mathbb{R}^p.
\end{equation}
Moreover, if $\lambda_1>..>\lambda_p>0$, the reverse implication also holds:
\begin{equation*}
    \mathbf{patt}(u)=\mathbf{patt}(v)\iff \partial J_{\boldsymbol{\lambda}}(u)=\partial J_{\boldsymbol{\lambda}}(v)\hspace{0,3cm} \forall u,v\in\mathbb{R}^p;
\end{equation*}
see \cite{bogdan2022pattern, schneider2022geometry}. 
\end{remark}

In particular, (\ref{pattern-subdifferential}) asserts that $\partial J_{\boldsymbol{\lambda}}(u)$ is constant on $\mathbf{patt}^{-1}(\mathfrak{p})$, which allows for a well-defined notion of a subdifferential at a pattern.

\begin{definition}
Let $\mathfrak{p}\in\mathfrak{P}$ be any pattern. We define the SLOPE-subdifferential at $\mathfrak{p}$ as $\partial J_{\boldsymbol{\lambda}}(\mathfrak{p}):=\partial J_{\boldsymbol{\lambda}}(u)$, where $u\in\mathbb{R}^p$ with $\mathbf{patt}(u)=\mathfrak{p}$.
\end{definition}
We introduce some helpful notation to illuminate how the subdifferential arises naturally from the symmetries of the pattern. Abbreviate the diagonal matrices with entries $+1$ or $-1$ with $\{\pm 1\}^p$ and for $\mathfrak{p}\in\mathfrak{P}$ with $\mathcal{I}(\mathfrak{p})=\{I_0,I_1,..,I_m\}$, define the sign matrix $S_{\mathfrak{p}}\in\{\pm 1\}^p$ by $(S_{\mathfrak{p}})_{ii}:=1$ for $i\in I_{0}$ and $(S_{\mathfrak{p}})_{ii}:=sgn(\mathfrak{p}_i)$ else. The chain rule yields:
\begin{align*}
\text{i})\hspace{0.3cm} & \partial  J_{\boldsymbol{\lambda}}(S\mathfrak{p})=S\partial J_{\boldsymbol{\lambda}}(\mathfrak{p})  \hspace{0.2cm} \forall S\in\{\pm 1\}^p
\\
\text{ii})\hspace{0.3cm} & \partial J_{\boldsymbol{\lambda}}(\vert\mathfrak{p}\vert)= S_{\mathfrak{p}}\partial  J_{\boldsymbol{\lambda}}(\mathfrak{p})
\end{align*}

Further, denote by $\mathcal{S}_p$ the set of permutation matrices on $\mathbb{R}^p$ and $\mathcal{S}_I$ the subset of all $p\times p$ matrices, where the $I\times I$ submatrix is a permutation matrix and all other entries are zero. Also, consider the direct product $G=\{\pm 1\}^p\cdot\mathcal{S}_p$. In particular, let $\mathcal{S}(\mathfrak{p})$ denote the group of symmetries of $\vert\mathfrak{p}\vert$ in $G$:
\begin{align*}
    \mathcal{S}(\mathfrak{p})&:=\left\{\Sigma\in G: \Sigma\vert\mathfrak{p}\vert=\vert\mathfrak{p}\vert\right\}\\
    &=\mathcal{S}^{+/-}_{I_0}\oplus\mathcal{S}_{I_1}\oplus...\oplus\mathcal{S}_{I_m}.
\end{align*}
These are the matrices, which act as permutations on the non-zero clusters $I_1,..,I_m$; and allow additional swapping of signs on $I_0$.

Moreover, for a given $v\in\mathbb{R}^p$, consider a permutation matrix $\Pi^T_v$, which sorts $\vert v\vert$, meaning that $(\Pi^{T}_v\vert v\vert)_1\geq..\geq(\Pi^{T}_v\vert v\vert)_p$. In particular, we have
 $J_{\boldsymbol{\lambda}}(v)=\langle \boldsymbol{\lambda}, \Pi^T_v \vert v \vert \rangle =\langle \Pi_v \boldsymbol{\lambda},  \vert v \vert \rangle$.  Analogously, for any pattern $\mathfrak{p}\in\mathfrak{P}$ we define the set
\begin{equation*}
    \Pi(\mathfrak{p}):=\left\{ \Pi\in\mathcal{S}_p: (\Pi^{T}\vert\mathfrak{p}\vert)_1\geq..\geq(\Pi^{T}\vert\mathfrak{p}\vert)_p\right\},
\end{equation*}
and shall write $\Pi_{\mathfrak{p}}$ for an arbitrary, but fixed element in $\Pi(\mathfrak{p})$.

\begin{example}
\def\re{\color{red}}
\def\bl{\color{blue}}
\def\gr{\color{green}}
\def\black{\color{black}}
Let $\mathfrak{p}=(\textcolor{green}{0}, \textcolor{blue}{2}, \textcolor{blue}{-2}, \textcolor{red}{1}, \textcolor{blue}{2}, \textcolor{red}{1} )^T$, $\mathcal{I}(\mathfrak{p})=\{\textcolor{green}{\{1\}},\textcolor{red}{\{4,6\}},\textcolor{blue}{\{2,3,5\}}\}$. 
\\
\[ \Sigma = 
\left(\begin{matrix}
\gr-1 & 0 & 0 & 0 & 0 & 0 \\
0 & \bl0 & \bl1 & 0 & \bl0 & 0 \\
0 & \bl0 & \bl0 & 0 & \bl1 & 0\\
0 & 0 & 0 & \re0 & 0 & \re1\\
0 & \bl1 & \bl0 & 0 & \bl0 & 0\\
0 & 0 & 0 & \re1 & 0 & \re0
\end{matrix} \right)\in\mathcal{S}(\mathfrak{p})=\textcolor{green}{\mathcal{S}^{+/-}_{I_0}}\oplus\textcolor{red}{{S}_{I_1}}\oplus\textcolor{blue}{\mathcal{S}_{I_2}}
\qquad \Pi^T_{\mathfrak{p}} = \left(\begin{matrix}
0 & \bl1 & 0 & 0 & 0 & 0 \\
0 & 0 & \bl1 & 0 & 0 & 0 \\
0 & 0 & 0 & 0 & \bl1 & 0\\
0 & 0 & 0 & \re1 & 0 & 0\\
0 & 0 & 0 & 0 & 0 & \re1\\
\gr1 & 0 & 0 & 0 & 0 & 0
\end{matrix} \right)\in\Pi(\mathfrak{p})
\]
$\Sigma\mathfrak{p}=(\textcolor{green}{0}, \textcolor{blue}{-2}, \textcolor{blue}{2}, \textcolor{red}{1}, \textcolor{blue}{2}, \textcolor{red}{1} )^T$\hspace{6cm}$\Pi^T_{\mathfrak{p}}\mathfrak{p}=( \textcolor{blue}{2}, \textcolor{blue}{-2},\textcolor{blue}{2},  \textcolor{red}{1}, \textcolor{red}{1},  \textcolor{green}{0})^T$
\end{example}
Now consider a fixed $u\in\mathbb{R}^p$, $\mathfrak{p}=\mathbf{patt}(u)$,  and $\Pi_{\mathfrak{p}}\in \Pi({\mathfrak{p})}$. We can write 
\begin{align*}
    J_{\boldsymbol{\lambda}}(u)&=\max_{\Pi\in\mathcal{S}_{p}}\left\langle \Pi\boldsymbol{\lambda}, \vert u \vert\right\rangle
    \\
    &=\left\langle \Pi_{\mathfrak{p}}\boldsymbol{\lambda},  \vert u\vert\right\rangle.
\end{align*}
More importantly, for a sufficiently small\footnote{ We want $\varepsilon$ to be smaller than one half of the smallest gap between the clusters.} $\varepsilon>0$, we have for all $v\in B_{\varepsilon}(u)$:
\begin{align}\label{local representation}
    J_{\boldsymbol{\lambda}}(v)&=\max\left\{\left\langle \Sigma \Pi_{\mathfrak{p}}\boldsymbol{\lambda},  \vert v\vert\right\rangle: \Sigma\in\mathcal{S}(\mathfrak{p})\right\}\nonumber
    \\
    &=\max\left\{\left\langle S_{\mathfrak{p}}\Sigma \Pi_{\mathfrak{p}}\boldsymbol{\lambda},   v\right\rangle: \Sigma\in\mathcal{S}(\mathfrak{p})\right\} \overset{not.}{=:} J_{\boldsymbol{\lambda}}(\mathfrak{p},v) .
\end{align}
In general, for $f=\max\{f_1,..,f_N\}$, where $f_i$ are smooth functions, the subdifferential of $f$ at a point $x$ is given by the convex hull:
\begin{equation}\label{subdiff for the max function}
\partial f(x)=con\{\nabla f_i(x):i\in I(x)\},
\end{equation}
where $I(x):=\{i:f_i(x)=f(x)\}$; (see \cite{rockafellar2009variational} Exercise 8.31). Combining (\ref{local representation}) and (\ref{subdiff for the max function}), one obtains an explicit representation of the subdifferential of SLOPE as a convex hull:
\begin{align}\label{subdifferential as convex hull}
 \partial J_{\boldsymbol{\lambda}}(\mathfrak{p})&=con\left\{S_{\mathfrak{p}}\Sigma\Pi_{\mathfrak{p}}\boldsymbol{\lambda}:\Sigma\in\mathcal{S}(\mathfrak{p})\right\}
 \\[5pt]
 &=S_{\mathfrak{p}}\hspace{0.1cm}con\left\{\mathcal{S}(\mathfrak{p})\right\}\Pi_{\mathfrak{p}}\boldsymbol{\lambda}.\nonumber
\end{align}
Notice how the subdifferential is fully captured by the group of symmetries $\mathcal{S}(\mathfrak{p})$ of the pattern. We can also view the subdifferential as a direct sum of convex hulls on each of the clusters (see \cite{bu2019algorithmic}):
\begin{align}
    \partial J_{\boldsymbol{\lambda}}(\mathfrak{p})&=\left\{v\in\mathbb{R}^p\middle\vert 
        \begin{array}{l}
        v_{I_j}\in S_{\mathfrak{p}}con\{\mathcal{S}_{I_j}\}\Pi_{\mathfrak{p}}\boldsymbol{\lambda},\hspace{0,2cm} j=1,..,m\\[5pt]
         v_{I_0}\in S_{\mathfrak{p}}con\{\mathcal{S}^{+/-}_{I_0}\}\Pi_{\mathfrak{p}}\boldsymbol{\lambda}
         \end{array}
     \right\}\nonumber
     \\[5pt]
     &=con\{S_{\mathfrak{p}}\mathcal{S}^{+/-}_{I_0}\Pi_{\mathfrak{p}}\boldsymbol{\lambda}\}\oplus con\{S_{\mathfrak{p}}\mathcal{S}_{I_1}\Pi_{\mathfrak{p}}\boldsymbol{\lambda}\}\oplus..\oplus con\{ S_{\mathfrak{p}}\mathcal{S}_{I_m}\Pi_{\mathfrak{p}}\boldsymbol{\lambda}\}.\label{subdifferential as convex hull in clusters}
\end{align}
In other words, the subdifferential is obtained by assigning $\boldsymbol{\lambda}'s$ to each cluster according to $\Pi_{\mathfrak{p}}$ and then taking the convex hull of all possible permutations within each cluster, whilst keeping track of the signs. The convex hull $con\mathcal{S}_{I_j}$ is given by the Birkhoff polytopes on $I_j$ (also known as the doubly stochastic matrices), which are the $I_j\times I_j$ matrices, where each row and column sums to one. We refer the reader to \cite{bu2019algorithmic} for further details about the representation of the SLOPE subdifferential in terms of Birkhoff polytopes and to \cite{schneider2022geometry} and \cite{larsson2020strong} for different derivations of the SLOPE subdifferential. 

\subsection{Hausdorff distance and directional SLOPE derivative}\label{Hausdorff distance and directional SLOPE derivative}
We shall now consider the directional derivative in (\ref{directional SLOPE derivative}):
\begin{equation*}
    J'_{\boldsymbol{\lambda}}({\beta^0};u)=\sum\limits_{i=1}^p\lambda_{\pi(i)}\left[u_i sgn(\beta^0_i)\mathbb{I}[\beta^0_i\neq0]+\vert u_i\vert\mathbb{I}[\beta^0_i=0]\right].
\end{equation*}

Recall from (\ref{J_lambda convergence}) that for every $u$; $J_{\boldsymbol{\lambda}^n}(\beta^0+u/\sqrt{n})-J_{\boldsymbol{\lambda}^n}(\beta^0)\longrightarrow J'_{\boldsymbol{\lambda}}({\beta^0};u)$ as $\boldsymbol{\lambda}^n/\sqrt{n}\rightarrow\boldsymbol{\lambda}$. We will argue, that the convergence is carried over to the convergence of the subdifferentials 
$\partial_u(J_{\boldsymbol{\lambda}^n}(\beta^0+u/\sqrt{n}))\overset{d_H}{\longrightarrow}\partial J'_{\boldsymbol{\lambda}}({\beta^0};u)$ in the Hausdorff metric.

To that end let $B^{\delta}:=\{x\in\mathbb{R}^d: d(x,B)\leq\delta\}=B+\overline{B_{\delta}(0)}$, where $\overline{B_{\delta}(x)}$ is the closed $\delta$-ball around $x$, and denote $B^{-\delta}:=\{x\in\mathbb{R}^d: \overline{B_{\delta}(x)}\subset B\}$. For non-empty sets $A,B\subset\mathbb{R}^p$, the Hausdorff distance (also Pompeiu-Hausdorff distance) is defined as
\begin{equation*}
    d_H(A,B):=inf\{\delta\geq 0\vert A\subset B^{\delta}, B\subset A^{\delta}\};
\end{equation*}
see \cite{rockafellar2009variational}. The Hausdorff distance defines a pseudo-metric, and yields a metric on the space of all closed, bounded, non-empty subsets of $\mathbb{R}^p$. For a sequence of sets $A_n$, we write $A_n \overset{d_H}{\longrightarrow} A$, if $d_H(A_n,A)\rightarrow 0$ as $n\rightarrow\infty$. We collect several basic properties of the Hausdorff distance. First, if $A_n \overset{d_H}{\longrightarrow} A, A_n'\overset{d_H}{\longrightarrow} A'$, then also $A_n+A_n'\overset{d_H}{\longrightarrow} A+A'$. It follows that if $B_n\overset{d_H}{\longrightarrow}B$, then $B_n^{\delta}\overset{d_H}{\longrightarrow}B^{\delta}$ and $B_n^{-\delta}\overset{d_H}{\longrightarrow}B^{-\delta}$, because $d_H(B_n^{-\delta},B^{-\delta})\leq d_H(B_n,B)$. Finally, one can check that if $x_n^i\rightarrow x^i$ as $n\rightarrow\infty$ are $N$ convergent sequences of points in $\mathbb{R}^p$, then 
\begin{equation}\label{corner convergence}
    con\{x^1_n,\dots,x^N_n\}\overset{d_H}{\longrightarrow}con\{x^1,\dots,x^N\}.
\end{equation}

In particular, convergence of convex sets in Hausdorff distance is compatible with convergence in distribution of random vectors. 

\begin{lemma}\label{Hausdorff lemma}
    Suppose $B_n\overset{d_H}{\longrightarrow}B$, where $B_n$ and $B$ are convex sets in $\mathbb{R}^p$. If $W_n\overset{d}{\longrightarrow}W$, for some $W$ with a continuous bounded density w.r.t. the Lebesque measure on $\mathbb{R}^p$. Then $\mathbb{P}[W_n\in B_n]\longrightarrow \mathbb{P}[W\in B]$.
\end{lemma}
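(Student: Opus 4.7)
The plan is to sandwich $B_n$ between convex inner and outer $\delta$-approximations of $B$, apply the Portmanteau theorem to each approximation, and then let $\delta \to 0$ using the assumed density of $W$.

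First I would establish the convex sandwich: for every $\delta>0$, eventually $B^{-\delta}\subset B_n\subset B^{\delta}$. The outer inclusion is immediate from the definition of $d_H$. For the inner inclusion, note that if $d_H(B_n,B)<\delta$, then $B\subset B_n^{\delta}$, and since $B_n$ is convex one has $(B_n^{\delta})^{-\delta}=B_n$ by a standard hyperplane-separation argument; therefore $B^{-\delta}\subset(B_n^{\delta})^{-\delta}=B_n$. This yields
\begin{equation*}
\mathbb{P}[W_n\in B^{-\delta}]\leq \mathbb{P}[W_n\in B_n]\leq \mathbb{P}[W_n\in B^{\delta}]
\end{equation*}
for all sufficiently large $n$.

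Next I would push the bounds to the limit in $n$. Since $B^{-\delta}$ and $B^{\delta}$ are convex, their topological boundaries have Lebesgue measure zero, hence by the continuous (and bounded) density of $W$ we have $\mathbb{P}[W\in\partial B^{\pm\delta}]=0$. These are therefore continuity sets of the law of $W$, so the Portmanteau theorem combined with $W_n\xrightarrow{d}W$ gives
\begin{equation*}
\liminf_{n\to\infty}\mathbb{P}[W_n\in B_n]\geq \mathbb{P}[W\in B^{-\delta}]\quad\text{and}\quad \limsup_{n\to\infty}\mathbb{P}[W_n\in B_n]\leq \mathbb{P}[W\in B^{\delta}].
\end{equation*}

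Finally I would let $\delta\downarrow 0$. The families $\{B^{-\delta}\}$ and $\{B^{\delta}\}$ are monotone in $\delta$ with $\bigcup_{\delta>0}B^{-\delta}=\mathrm{int}(B)$ and $\bigcap_{\delta>0}B^{\delta}=\overline{B}$; by dominated convergence applied to the density of $W$, the two bounds both tend to $\mathbb{P}[W\in \overline{B}]=\mathbb{P}[W\in B]$, using once more that $\partial B$ has Lebesgue measure zero (since $B$ is convex) and $W$ has a density. Squeezing gives $\mathbb{P}[W_n\in B_n]\to \mathbb{P}[W\in B]$.

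The only real subtlety is the inner sandwich $B^{-\delta}\subset B_n$, which genuinely needs the convexity hypothesis: without convexity the identity $(B_n^{\delta})^{-\delta}=B_n$ fails, so this is the one place where the proof cannot be done by formal manipulation of $d_H$ alone. Everything else is a routine combination of Portmanteau with the absolute continuity of $W$.
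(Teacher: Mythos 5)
Your proof is correct and follows essentially the same route as the paper: the convex sandwich $B^{-\delta}\subset B_n\subset B^{\delta}$ via the hyperplane-separation identity, then Portmanteau, then shrinking $\delta$. The only cosmetic differences are that you invoke the continuity-set form of Portmanteau (boundaries of convex sets are Lebesgue-null) and pass to the limit $\delta\downarrow 0$ by monotone set convergence, whereas the paper uses the closed/open-set form with $B^{\delta/2}$, $B^{-\delta/2}$ and a quantified $\varepsilon$--$\delta$ approximation of $\mathbb{P}[W\in B]$ by $\mathbb{P}[W\in B^{\pm\delta}]$.
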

\begin{proof}
 Let $\delta>0$ be arbitrary. Since $B_n\overset{d_H}{\longrightarrow}B$, it follows that $B_n\subset B^{\delta}$ eventually. Similarly, for all sufficiently large $n$ we have $B\subset B_n^{\delta}$, thus $B^{-\delta}\subset (B_n^{\delta})^{-\delta}= B_n$, where last equality follows from convexity\footnote{Convexity is necessary: The annuli $B_n=\overline{B_1(0)}\setminus B_{1/n}(0)\overset{d_H}{\longrightarrow} \overline{B_1(0)}$, but $\overline{B_1(0)}^{-\delta}\not\subset B_n$. } and a hyperplane separation argument. As a result, for any $\delta>0$ $ B^{-\delta}\subset B_n\subset B^{\delta}\hspace{0,2cm} \text{eventually}$. Moreover, since $B$ is convex, one can show that for every $\varepsilon>0$ there exists \footnote{In fact, the bounds with tubular sets hold uniformly over all convex sets; i.e., for each $\varepsilon>0$ there even exists a $\delta>0$ such that (\ref{B^delta approx}) holds for every convex set $B$. 
} a $\delta>0$ such that 
\begin{equation}\label{B^delta approx}
    \mathbb{P}[W\in B^{\delta}\hspace{0,1cm}]-\varepsilon\leq\mathbb{P}[W\in B]\leq\mathbb{P}[W\in B^{-\delta}]+\varepsilon.
\end{equation}
 Consequently, for any $\varepsilon>0$ we can choose $\delta>0$ sufficiently small such that:
\begin{align*}
    \limsup_{n\rightarrow\infty}\mathbb{P}[W_n\in B_n]&\leq \limsup_{n\rightarrow\infty} \mathbb{P}[W_n\in B^{\delta/2}]\leq\mathbb{P}[W\in B^{\delta}]\leq\mathbb{P}[W\in B]+\varepsilon
    \\[5pt]
    \liminf_{n\rightarrow\infty}\mathbb{P}[W_n\in B_n]&\geq \liminf_{n\rightarrow\infty} \mathbb{P}[W_n\in B^{-\delta/2}]\geq\mathbb{P}[W\in B^{-\delta}]\geq\mathbb{P}[W\in B]-\varepsilon,
\end{align*}
where we have have used the Portmanteau Lemma and that $\overline{B^{\delta/2}}\subset B^{\delta}$, $(B^{-\delta/2})^{\circ}\supset B^{-\delta}$. This shows the desired convergence $\mathbb{P}[W_n\in B_n]\longrightarrow \mathbb{P}[W\in B]$.
\end{proof}

An immediate consequence of (\ref{subdifferential as convex hull}) and (\ref{corner convergence}) is that $\partial J_{\boldsymbol{\lambda}^n/\sqrt{n}}(\mathfrak{p})\overset{d_H}{\longrightarrow}\partial J_{\boldsymbol{\lambda}}(\mathfrak{p})\hspace{0,2cm}\forall\mathfrak{p}\in\mathfrak{P}$. To get an analogous statement for $J'_{\boldsymbol{\lambda}}(\beta^0;u)$, observe that for any fixed $u\in\mathbb{R}^p$, the pattern $\mathbf{patt}(\beta^0+u/\sqrt{n})$ eventually stabilizes\footnote{This means there exists $M>0$ such that $\forall n\geq M$ $\mathbf{patt}(\beta^0+u/\sqrt{n})=\mathbf{patt}_{\beta^0}(u)$.} as $n$ increases. This simple, but key idea of a limiting pattern lies at the core of the directional SLOPE derivative $J'_{\boldsymbol{\lambda}}({\beta^0};u)$, and deserves its own definition.
\begin{definition}
Let $u\in\mathbb{R}^p$, we define the \textit{limiting pattern of u with respect to $\beta^0$}, denoted by $\mathbf{patt}_{\beta^0}(u)$, as
\begin{equation*}
\mathbf{patt}_{\beta^0}(u):=\lim_{\varepsilon \searrow 0} \mathbf{patt}(\beta^0+\varepsilon u).
\end{equation*}
For a pattern $\mathfrak{p}\in\mathfrak{P}$ we define its limiting pattern by $\mathbf{patt}_{\beta^0}(\mathfrak{p})=\mathbf{patt}_{\beta^0}(u)$, where $u\in\mathbb{R}^p$ is any vector with $\mathbf{patt}(u)=\mathfrak{p}$. 
\end{definition}
Note that $\mathbf{patt}_{\beta^0}(\mathfrak{p})$ is well defined, since if $\mathbf{patt}(u)$=$\mathbf{patt}(u')$ then also $\mathbf{patt}_{\beta^0}(u)$=$\mathbf{patt}_{\beta^0}(u')$.

\begin{example}\label{pattern example}
An example of such a triple $\beta^0,  \mathfrak{p},  \mathbf{patt}_{\beta^0}(\mathfrak{p})$:
\[  \begin{array}{cccccccccccc}
\beta^0&=(0 & 0  & \vert3 & 3 & -3 &\vert 7 & 7 & 7 & 7 & 7&) \\
\mathfrak{p}&=(0 & \vert1 & 1 & 1 & -1 & -1 & -1 & \vert2 & -2 & -2&) \\
\mathbf{patt}_{\beta^0}(\mathfrak{p})&=(0 & \vert1 & \vert2 & 2 & -2 &\vert 4 & 4 &\vert 5 & \vert3 & 3&) 
\end{array} \] 
\end{example}

Recall the notation in (\ref{local representation}): $J_{\boldsymbol{\lambda}}( \mathfrak{p};v)=\max\left\{\left\langle S_{\mathfrak{p}}\Sigma \Pi_{\mathfrak{p}}\boldsymbol{\lambda},   v\right\rangle: \Sigma\in\mathcal{S}(\mathfrak{p})\right\}$ and the key insight that locally around some $u\in\mathbb{R}^p$ with $\mathbf{patt}(u)=\mathfrak{p}$, we can represent $J_{\boldsymbol{\lambda}}(v)=J_{\boldsymbol{\lambda}}(\mathfrak{p},v)$. The following result asserts that the directional derivative $J'_{\boldsymbol{\lambda}}(\beta^0,v)$ has a similar local representation in a neighborhood of $u\in\mathbb{R}^p$: $J'_{\boldsymbol{\lambda}}(\beta^0,v)=J_{\boldsymbol{\lambda}}(\mathfrak{p}_0;v)$, where $\mathfrak{p}_0$ is the limiting pattern of $u$ w.r.t. $\beta^0$. 
\begin{proposition}\label{key subdifferential proposition} Given $u\in\mathbb{R}^p$, $\mathfrak{p}=\mathbf{patt}(u)$,  $\mathfrak{p}_0=\mathbf{patt}_{\beta^0}(\mathfrak{p})$, the following holds:
\begin{align}
i)\hspace{0,2cm}& \exists \varepsilon>0 \hspace{0,2cm}\forall v\in B_{\varepsilon}(u) : J'_{\boldsymbol{\lambda}}(\beta^0;v)=J_{\boldsymbol{\lambda}}(\mathfrak{p}_0;v), \\
ii)\hspace{0,2cm}&\partial J'_{\boldsymbol{\lambda}}({\beta^0}; \mathfrak{p})=\partial J_{\boldsymbol{\lambda}}(\mathfrak{p}_0),\label{subdifferential identity for generalized penalty}\\
iii)\hspace{0,2cm}&
\partial_uJ_{\boldsymbol{\lambda}^n}(\beta^0+u/\sqrt{n})\overset{d_H}{\longrightarrow}\partial J'_{\boldsymbol{\lambda}}({\beta^0}; \mathfrak{p}).\label{Hausdorff subdifferential convergence}
\end{align}
\end{proposition}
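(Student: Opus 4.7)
My plan is to establish (i) as the workhorse and then derive (ii) and (iii) from it combined with the subdifferential representation (\ref{subdifferential as convex hull}) and its continuity in $\boldsymbol{\lambda}$. For (i), I would rewrite (\ref{directional SLOPE derivative}) as $J'_{\boldsymbol{\lambda}}(\beta^0;v)=\sum_i \lambda_{\pi_v(i)}\, t_i(v)$, with $t_i(v):=v_i\, sgn(\beta^0_i)\mathbb{I}[\beta^0_i\neq 0]+|v_i|\mathbb{I}[\beta^0_i= 0]$ and $\pi_v$ a permutation sorting $|\beta^0+v/\sqrt n|$ for large $n$. By the very construction of $\mathfrak{p}_0$, $t_i(u)$ is constant on each cluster $I_k$ of $\mathfrak{p}_0$. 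Choosing $\varepsilon>0$ smaller than half the minimum gap in $t_\cdot(u)$ across distinct clusters of $\mathfrak{p}_0$ and smaller than $\min\{|u_i|:u_i\neq 0\}$, I ensure that for every $v\in B_\varepsilon(u)$ the permutation $\pi_v$ respects the between-cluster order imposed by $\mathfrak{p}_0$ and sorts each $I_k$ by decreasing $t_i(v)$. Applying the rearrangement inequality inside each $I_k$ then gives $\sum_{i\in I_k}\lambda_{\pi_v(i)}t_i(v)=\max_{\sigma_k}\sum_{i\in I_k}\lambda_{\sigma_k(i)}t_i(v)$, while on the zero cluster $I_0$ of $\mathfrak{p}_0$ (where $t_i(v)=|v_i|$) the sign-change freedom of $\mathcal{S}_{I_0}^{+/-}$ absorbs the absolute value via $\max_{\pm}\lambda\cdot(\pm v_i)=\lambda|v_i|$. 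Reassembled, the result is exactly $\max_{\Sigma\in\mathcal{S}(\mathfrak{p}_0)}\langle S_{\mathfrak{p}_0}\Sigma\Pi_{\mathfrak{p}_0}\boldsymbol{\lambda},v\rangle=J_{\boldsymbol{\lambda}}(\mathfrak{p}_0;v)$.

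For (ii), part (i) forces the convex functions $v\mapsto J'_{\boldsymbol{\lambda}}(\beta^0;v)$ and $v\mapsto J_{\boldsymbol{\lambda}}(\mathfrak{p}_0;v)$ to coincide on $B_\varepsilon(u)$, so their subdifferentials at $u$ agree. Since the second is a max of linear functions, its subdifferential at $u$ is the convex hull of those extreme points $S_{\mathfrak{p}_0}\Sigma\Pi_{\mathfrak{p}_0}\boldsymbol{\lambda}$ that attain the max at $u$. I then note that $S_{\mathfrak{p}_0}u$ is constant on each nonzero cluster of $\mathfrak{p}_0$ and vanishes on $I_0$, so it is pointwise invariant under $\mathcal{S}(\mathfrak{p}_0)$; consequently $\langle\Sigma\Pi_{\mathfrak{p}_0}\boldsymbol{\lambda},S_{\mathfrak{p}_0}u\rangle=\langle\Pi_{\mathfrak{p}_0}\boldsymbol{\lambda},S_{\mathfrak{p}_0}u\rangle$ is independent of $\Sigma$, every extreme point is active, and the subdifferential equals the full convex hull $\partial J_{\boldsymbol{\lambda}}(\mathfrak{p}_0)$ described in (\ref{subdifferential as convex hull}). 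Because this depends only on $\mathfrak{p}$, the notation $\partial J'_{\boldsymbol{\lambda}}(\beta^0;\mathfrak{p})$ is justified.

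For (iii), the definition of $\mathfrak{p}_0=\mathbf{patt}_{\beta^0}(u)$ yields $M>0$ with $\mathbf{patt}(\beta^0+u/\sqrt n)=\mathfrak{p}_0$ for all $n\geq M$. Using (\ref{pattern-subdifferential}) and the chain rule I then have $\partial_u J_{\boldsymbol{\lambda}^n}(\beta^0+u/\sqrt n) = \tfrac{1}{\sqrt n}\partial J_{\boldsymbol{\lambda}^n}(\mathfrak{p}_0)=\partial J_{\boldsymbol{\lambda}^n/\sqrt n}(\mathfrak{p}_0)$, where the last identity exploits the linear dependence of the extreme points in (\ref{subdifferential as convex hull}) on $\boldsymbol{\lambda}$. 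Since $\boldsymbol{\lambda}^n/\sqrt n\to\boldsymbol{\lambda}$, each extreme point converges and (\ref{corner convergence}) yields $\partial J_{\boldsymbol{\lambda}^n/\sqrt n}(\mathfrak{p}_0)\overset{d_H}{\longrightarrow}\partial J_{\boldsymbol{\lambda}}(\mathfrak{p}_0)$; invoking (ii) closes the argument. The main obstacle will be (i): I must verify that the refined sort $\pi_v$ induced by $v$ near $u$ is exactly the one selected by the rearrangement inequality inside each cluster of $\mathfrak{p}_0$, and that the sign-change symmetries on the zero cluster of $\mathfrak{p}_0$ correctly reproduce the absolute-value terms in the directional-derivative formula. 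Once that correspondence is made explicit, parts (ii) and (iii) reduce to bookkeeping.
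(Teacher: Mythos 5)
Your proposal is correct, and parts (ii) and (iii) coincide with the paper's argument: the paper likewise deduces (ii) from the local identity in (i) together with (\ref{subdiff for the max function}) and the observation that every $\Sigma\in\mathcal{S}(\mathfrak{p}_0)$ is active at $u$ (your invariance argument $\Sigma^T S_{\mathfrak{p}_0}u=S_{\mathfrak{p}_0}u$ is exactly the reason the paper asserts this), and it proves (iii) by noting that $\mathbf{patt}(\beta^0+u/\sqrt n)$ stabilizes at $\mathfrak{p}_0$, so $\partial_u J_{\boldsymbol{\lambda}^n}(\beta^0+u/\sqrt n)=\partial J_{\boldsymbol{\lambda}^n/\sqrt n}(\mathfrak{p}_0)\overset{d_H}{\longrightarrow}\partial J_{\boldsymbol{\lambda}}(\mathfrak{p}_0)$ via (\ref{subdifferential as convex hull}) and (\ref{corner convergence}). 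Where you genuinely diverge is (i): you verify the identity $J'_{\boldsymbol{\lambda}}(\beta^0;v)=J_{\boldsymbol{\lambda}}(\mathfrak{p}_0;v)$ directly, by decomposing the max over $\mathcal{S}(\mathfrak{p}_0)$ clusterwise and invoking the rearrangement inequality to show the sorting permutation $\pi_v$ realizes the maximizing assignment of the $\boldsymbol{\lambda}$-block on each cluster of $\mathfrak{p}_0$ (with the sign flips handling $I_0$), whereas the paper avoids this combinatorics by computing the limit of the same finite-$n$ difference $J_{\boldsymbol{\lambda}^n}(\beta^0+v/\sqrt n)-J_{\boldsymbol{\lambda}^n}(\beta^0)$ in two ways: once via (\ref{J_lambda convergence}) to get $J'_{\boldsymbol{\lambda}}(\beta^0;v)$, and once via the local representation (\ref{local representation}) at the stabilized pattern $\mathfrak{p}_0$, using that $J_{\boldsymbol{\lambda}^n}(\beta^0)=\langle S_{\mathfrak{p}_0}\Sigma\Pi_{\mathfrak{p}_0}\boldsymbol{\lambda}^n,\beta^0\rangle$ for every $\Sigma\in\mathcal{S}(\mathfrak{p}_0)$ (because $\mathcal{S}(\mathfrak{p}_0)\subset\mathcal{S}(\mathbf{patt}(\beta^0))$), so the constant slides inside the max and the limit is $J_{\boldsymbol{\lambda}}(\mathfrak{p}_0;v)$; uniqueness of limits then gives (i). Your route is more elementary and self-contained (it never touches the finite-$n$ norm for (i)), at the cost of the bookkeeping you anticipate; the paper's route reuses already-established machinery and is shorter. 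One small repair to your $\varepsilon$: take the half-gap condition on $t_\cdot(u)$ only over distinct clusters of $\mathfrak{p}_0$ lying \emph{within the same cluster of} $\beta^0$ (across different clusters of $\beta^0$ the $t$-values may tie, but there the ordering of $|\beta^0+v/\sqrt n|$ is decided by $|\beta^0|$ for large $n$, so such ties are harmless); as literally stated, your minimum gap could be zero and no admissible $\varepsilon$ would exist.
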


\begin{proof}
Take $M>0$ s.t. $\mathbf{patt}(\beta^0+u/\sqrt{n})=\mathfrak{p}_0$ for all $n\geq M$. By (\ref{local representation}), for all $v$ in a sufficiently small neigborhood $B_{\varepsilon}(u)$ around $u$; $J_{\boldsymbol{\lambda}^n}(\beta^0+v/\sqrt{n})=J_{\boldsymbol{\lambda}^n}(\mathfrak{p}_0;\beta^0+v/\sqrt{n} )\hspace{0,2cm} \forall n\geq M$. Moreover, observe that $J_{\boldsymbol{\lambda}^n}(\beta^0)=\left\langle \Sigma \Pi_{\beta^0}\boldsymbol{\lambda}^n,  \vert \beta^0 \vert \right\rangle=\left\langle \Sigma \Pi_{\mathfrak{p}_0}\boldsymbol{\lambda}^n,  S_{\mathfrak{p}_0} \beta^0\right\rangle$ for every $\Sigma\in\mathcal{S}(\mathfrak{p}_0)$, since $\vert \beta^0 \vert =  S_{\mathfrak{p}_0} \beta^0$, $\Pi ^T_{\mathfrak{p}_0}$ orders $\vert \beta^0 \vert $ and $\mathcal{S}(\mathfrak{p}_0) \subset \mathcal{S}(\mathbf{patt}(\beta^0))$. Hence $\forall v\in B_{\varepsilon}(u)$:
\begin{align*}
    J_{\boldsymbol{\lambda}^n}(\beta^0+v/\sqrt{n})-J_{\boldsymbol{\lambda}^n}(\beta^0)&= J_{\boldsymbol{\lambda}^n}(\mathfrak{p}_0;\beta^0+v/\sqrt{n}) -J_{\boldsymbol{\lambda}^n}(\beta^0)
    \\
    &=\max\limits_{\Sigma\in\mathcal{S}(\mathfrak{p}_0)}\left\langle S_{\mathfrak{p}_0}\Sigma \Pi_{\mathfrak{p}_0}\boldsymbol{\lambda}^n,  \beta^0+v/\sqrt{n} \right\rangle - J_{\boldsymbol{\lambda}^n}(\beta^0)
    \\
    &=\max\limits_{\Sigma\in\mathcal{S}(\mathfrak{p}_0)}\left\langle S_{\mathfrak{p}_0}\Sigma \Pi_{\mathfrak{p}_0}\boldsymbol{\lambda}^n/\sqrt{n},  v \right\rangle
    \\
    &\longrightarrow \max\limits_{\Sigma\in\mathcal{S}(\mathfrak{p}_0)}\left\langle S_{\mathfrak{p}_0}\Sigma \Pi_{\mathfrak{p}_0}\boldsymbol{\lambda},  v \right\rangle = J_{\boldsymbol{\lambda}}(\mathfrak{p}_0;v).
\end{align*}
From (\ref{J_lambda convergence}), we also know that $J_{\boldsymbol{\lambda}^n}(\beta^0+v/\sqrt{n})-J_{\boldsymbol{\lambda}^n}(\beta^0)\longrightarrow J'_{\boldsymbol{\lambda}}(\beta^0;v)$, which yields claim $i)$. As a result of $i)$; $\partial J'_{\boldsymbol{\lambda}}(\beta^0;u)=\partial J_{\boldsymbol{\lambda}}(\mathfrak{p}_0;u)=con\left\{S_{\mathfrak{p}_0}\Sigma\Pi_{\mathfrak{p}_0}\boldsymbol{\lambda}:\Sigma\in\mathcal{S}(\mathfrak{p}_0)\right\}=\partial J_{\boldsymbol{\lambda}}(\mathfrak{p}_0)$, where the second equality follows from (\ref{subdiff for the max function}) by observing that the maximum of $\left\langle S_{\mathfrak{p}_0}\Sigma \Pi_{\mathfrak{p}_0}\boldsymbol{\lambda},  u \right\rangle $ is attained for all $\Sigma\in\mathcal{S}(\mathfrak{p}_0)$. This shows $ii)$. Finally, for $n\geq M$; $\partial_uJ_{\boldsymbol{\lambda}^n}(\beta^0+u/\sqrt{n})=\partial J_{\boldsymbol{\lambda}^n/\sqrt{n}}(\mathfrak{p}_0)$, which converges in Hausdorff distance to $\partial J_{\boldsymbol{\lambda}}(\mathfrak{p}_0)$ by (\ref{subdifferential as convex hull}) and (\ref{corner convergence}), finishing the proof.

\end{proof}

\section{Weak pattern convergence}
In this section we show that on top of the weak convergence established in Theorem \ref{sqrt-asymptotic} the sequence of patterns $\mathbf{patt}(\boldsymbol{\hat{u}}_n)$ converges weakly to $\mathbf{patt}(\boldsymbol{\hat{u}})$. We remark that the weak pattern convergence does not follow merely from the weak convergence of $\boldsymbol{\hat{u}}_n$ to $\boldsymbol{\hat{u}}$, since the pattern function $\mathbf{patt}:\mathbb{R}^d\rightarrow\mathbb{Z}^d$ is not continuous. (See Example \ref{counterexample pattern convergence}.)

\begin{theorem}\label{theorem pattern} For any pattern $\mathfrak{p}\in\mathfrak{P}$ we have:
\begin{equation*}
    \mathbb{P}[\mathbf{patt}(\boldsymbol{\hat{u}}_n)=\mathfrak{p}]\xrightarrow[n\rightarrow\infty]{}\mathbb{P}[\mathbf{patt}(\boldsymbol{\hat{u}})=\mathfrak{p}].
\end{equation*}
\end{theorem}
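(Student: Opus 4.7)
The plan is to express both events $\{\mathbf{patt}(\boldsymbol{\hat{u}}_n)=\mathfrak{p}\}$ and $\{\mathbf{patt}(\boldsymbol{\hat{u}})=\mathfrak{p}\}$ in the form $\{W_n\in R_n\}$ and $\{W\in R\}$ for convex sets $R_n,R\subset\mathbb{R}^p$, to show that these sets converge in Hausdorff distance on bounded windows via Proposition~\ref{key subdifferential proposition}, and then to invoke Lemma~\ref{Hausdorff lemma} together with tightness to transfer the convergence to the probabilities.

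Fix $\mathfrak{p}\in\mathfrak{P}$ and set $\mathfrak{p}_0:=\mathbf{patt}_{\beta^0}(\mathfrak{p})$. Since $C\succ 0$, the objective $V$ in~\eqref{V(u)} is strictly convex and its unique minimizer $\boldsymbol{\hat{u}}$ is characterized by the KKT inclusion $W-C\boldsymbol{\hat{u}}\in\partial J'_{\boldsymbol{\lambda}}(\beta^0;\boldsymbol{\hat{u}})$. Proposition~\ref{key subdifferential proposition}(ii) asserts that for every $u\in\mathbf{patt}^{-1}(\mathfrak{p})$ this subdifferential equals the fixed convex polytope $\partial J_{\boldsymbol{\lambda}}(\mathfrak{p}_0)$, so, collecting the allowed $u$'s, the event becomes
\begin{equation*}
    \{\mathbf{patt}(\boldsymbol{\hat{u}})=\mathfrak{p}\}=\{W\in R(\mathfrak{p})\},\qquad R(\mathfrak{p}):=C\cdot\mathbf{patt}^{-1}(\mathfrak{p})+\partial J_{\boldsymbol{\lambda}}(\mathfrak{p}_0).
\end{equation*}
The set $R(\mathfrak{p})$ is convex because $\mathbf{patt}^{-1}(\mathfrak{p})$ is a relatively open convex cone, parametrized by the cluster magnitudes with a strict decreasing ordering. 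The analogous identity for $\boldsymbol{\hat{u}}_n$ requires one extra remark: the finite-$n$ subdifferential $\partial_uJ_{\boldsymbol{\lambda}^n}(\beta^0+u/\sqrt{n})$ reduces to the constant set $\partial J_{\boldsymbol{\lambda}^n/\sqrt{n}}(\mathfrak{p}_0)$ precisely once the pattern $\mathbf{patt}(\beta^0+u/\sqrt{n})$ has stabilized to $\mathfrak{p}_0$, which happens uniformly over $u\in\mathbf{patt}^{-1}(\mathfrak{p})\cap\overline{B_M(0)}$ for all $n$ sufficiently large. On the event $\{\Vert\boldsymbol{\hat{u}}_n\Vert\leq M\}$ we therefore obtain
\begin{equation*}
    \{\mathbf{patt}(\boldsymbol{\hat{u}}_n)=\mathfrak{p}\}\cap\{\Vert\boldsymbol{\hat{u}}_n\Vert\leq M\}=\{W_n\in R_n(\mathfrak{p})\}\cap\{\Vert\boldsymbol{\hat{u}}_n\Vert\leq M\},
\end{equation*}
with $R_n(\mathfrak{p}):=C_n\cdot\mathbf{patt}^{-1}(\mathfrak{p})+\partial J_{\boldsymbol{\lambda}^n/\sqrt{n}}(\mathfrak{p}_0)$, and the tightness of $\{\boldsymbol{\hat{u}}_n\}$ inherited from Theorem~\ref{sqrt-asymptotic} makes the discarded event uniformly small as $M\to\infty$.

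To close the argument, I would combine $C_n\overset{a.s.}{\longrightarrow}C$ with the Hausdorff convergence $\partial J_{\boldsymbol{\lambda}^n/\sqrt{n}}(\mathfrak{p}_0)\overset{d_H}{\longrightarrow}\partial J_{\boldsymbol{\lambda}}(\mathfrak{p}_0)$ supplied by Proposition~\ref{key subdifferential proposition}(iii), together with the compatibility of $d_H$ with Minkowski addition and with application of operator-norm convergent linear maps to bounded sets, to conclude that the truncations $R_n(\mathfrak{p})\cap\overline{B_N(0)}$ converge in Hausdorff distance to $R(\mathfrak{p})\cap\overline{B_N(0)}$ for every $N>0$. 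The truncated sets are convex, $W_n\overset{d}{\longrightarrow}W\sim\mathcal{N}(0,\sigma^2C)$ has a continuous bounded Lebesgue density, and Lemma~\ref{Hausdorff lemma} therefore gives the convergence of probabilities inside each window; letting $M,N\to\infty$ using tightness of both $W_n$ and $\boldsymbol{\hat{u}}_n$ yields $\mathbb{P}[\mathbf{patt}(\boldsymbol{\hat{u}}_n)=\mathfrak{p}]\to\mathbb{P}[\mathbf{patt}(\boldsymbol{\hat{u}})=\mathfrak{p}]$. The main technical obstacle is precisely the unboundedness of the cone $\mathbf{patt}^{-1}(\mathfrak{p})$: Hausdorff convergence of the full sets $R_n$ to $R$ can fail globally (cones whose directions differ by $O(\Vert C_n-C\Vert)$ stay uniformly far apart at large radius), so the transfer from bounded windows to full probabilities must be done carefully via coordinated truncation of the two parameters $M$ and $N$.
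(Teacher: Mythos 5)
Your overall strategy is the same as the paper's: rewrite the pattern events via the KKT inclusions as membership of $W_n$ and $W$ in convex sets, get Hausdorff convergence of the subdifferentials from Proposition~\ref{key subdifferential proposition}, and finish with Lemma~\ref{Hausdorff lemma} plus tightness of $\boldsymbol{\hat{u}}_n$. The gap is in your finite-$n$ reformulation. The claimed identity $\{\mathbf{patt}(\boldsymbol{\hat{u}}_n)=\mathfrak{p}\}\cap\{\Vert\boldsymbol{\hat{u}}_n\Vert\leq M\}=\{W_n\in R_n(\mathfrak{p})\}\cap\{\Vert\boldsymbol{\hat{u}}_n\Vert\leq M\}$, with $R_n(\mathfrak{p})=C_n\,\mathbf{patt}^{-1}(\mathfrak{p})+\partial J_{\boldsymbol{\lambda}^n/\sqrt{n}}(\mathfrak{p}_0)$ built from the \emph{full} cone, is only an inclusion ``$\subset$''. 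The reverse inclusion would require: whenever $W_n-C_nv\in\partial J_{\boldsymbol{\lambda}^n/\sqrt{n}}(\mathfrak{p}_0)$ for some $v$ with $\mathbf{patt}(v)=\mathfrak{p}$, then $v$ solves the optimality condition of $V_n$ and hence equals $\boldsymbol{\hat{u}}_n$. That step needs $\partial J_{\boldsymbol{\lambda}^n/\sqrt{n}}(\mathfrak{p}_0)\subset\partial_uJ_{\boldsymbol{\lambda}^n}(\beta^0+v/\sqrt{n})$, which is valid only where $\mathbf{patt}(\beta^0+v/\sqrt{n})$ has stabilized at $\mathfrak{p}_0$, i.e.\ only for $\Vert v\Vert$ small relative to $\sqrt{n}$. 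For $\Vert v\Vert$ of order $\sqrt{n}$ the pattern of $\beta^0+v/\sqrt{n}$ can reorder or change signs: e.g.\ $\beta^0=(1,2)$, $v=t(1,-1)$ with $t\approx 1.5\sqrt{n}$ gives $\beta^0+v/\sqrt{n}=(2.5,\,0.5)$, whose subdifferential $\{(\lambda^n_1,\lambda^n_2)/\sqrt{n}\}$ does not contain $\{(\lambda^n_2,\lambda^n_1)/\sqrt{n}\}=\partial J_{\boldsymbol{\lambda}^n/\sqrt{n}}(\mathfrak{p}_0)$ when $\lambda_1>\lambda_2$. Such far-out $v$ contribute points to $R_n(\mathfrak{p})$ that carry no information about $\boldsymbol{\hat{u}}_n$, so $\{W_n\in R_n(\mathfrak{p})\}$ may strictly contain the pattern event even on $\{\Vert\boldsymbol{\hat{u}}_n\Vert\leq M\}$, and the $\liminf$ half of your argument is unsupported. (Your identity for the limiting problem with the full cone is fine, since $J'_{\boldsymbol{\lambda}}(\beta^0;\cdot)$ is positively homogeneous and Proposition~\ref{key subdifferential proposition}(ii) applies at every point of the cone.)

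The repair is exactly the paper's construction, and it is simpler than your proposed $W$-space truncation: truncate the cone in the $u$-variable. With $D^{\mathfrak{p}}:=\mathbf{patt}^{-1}(\mathfrak{p})\cap B_M(0)$, $B_n:=C_nD^{\mathfrak{p}}+\partial J_{\boldsymbol{\lambda}^n/\sqrt{n}}(\mathfrak{p}_0)$ and $B:=CD^{\mathfrak{p}}+\partial J'_{\boldsymbol{\lambda}}(\beta^0;\mathfrak{p})$, the pattern stabilization is uniform on the bounded set $D^{\mathfrak{p}}$, so for all large $n$ one has the exact equivalence $\{\boldsymbol{\hat{u}}_n\in D^{\mathfrak{p}}\}=\{W_n\in B_n\}$ in both directions (uniqueness of the minimizer gives the reverse implication); $B_n$ and $B$ are bounded convex, $d_H(B_n,B)\to 0$ a.s.\ by Proposition~\ref{key subdifferential proposition}(iii) together with $C_n\overset{a.s.}{\longrightarrow}C$, and Lemma~\ref{Hausdorff lemma} applies directly. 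A single truncation parameter $M$, supplied by tightness of $\boldsymbol{\hat{u}}_n$, then closes the $\limsup/\liminf$ sandwich. Your second truncation $R_n(\mathfrak{p})\cap\overline{B_N(0)}$ in the $W$-variable is both unnecessary and itself unproved with the tools at hand (Hausdorff convergence of unbounded convex sets intersected with a fixed ball can fail at tangential intersections), and in any case it does not restore the missing reverse inclusion.
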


Before proving the theorem, we make a few preparatory observations.  Using the notation from (\ref{W_n C_n convergence}),  $C_n:=n^{-1}X^{T}X$, $W_n=n^{-1/2} X^{T}\varepsilon$ and $W\sim\mathcal{N}(0,\sigma^2C)$, the optimality conditions arising from (\ref{V(u)}) and (\ref{V_n(u)}) in terms of subdifferentials read
\begin{align*}
       &  W\in C\boldsymbol{\hat{u}}+ \partial J'_{\boldsymbol{\lambda}}(\beta^0;\boldsymbol{\hat{u}}),\\
       &  W_n\in C_n \boldsymbol{\hat{u}}_n+\partial J_{\boldsymbol{\lambda}^n/\sqrt{n}}(\beta^0+\boldsymbol{\hat{u}}_n/\sqrt{n}).
\end{align*}
Given $\mathfrak{p}\in\mathfrak{P}$ and any bounded set $D^{\mathfrak{p}}\subset\mathbf{patt}^{-1}(\mathfrak{p})$, we have for sufficiently large $n$:
\begin{align}\label{u_n set general}
       \boldsymbol{\hat{u}}\in D^{\mathfrak{p}} &\iff W\in CD^{\mathfrak{p}}+ \partial J'_{\boldsymbol{\lambda}}({\beta^0}; \mathfrak{p})=:B,\\
        \boldsymbol{\hat{u}}_n\in D^{\mathfrak{p}} &\iff  W_n\in C_nD^{\mathfrak{p}}+ \partial J_{\boldsymbol{\lambda}^ n/\sqrt{n}}(\mathfrak{p}_0)=:B_n,\nonumber
\end{align}
where $\mathfrak{p}_0=\mathbf{patt}_{\beta^0}(\mathfrak{p})$, and $B_n$ is a random set. 

\begin{proof}(Theorem \ref{theorem pattern})
Let $\varepsilon>0$ be arbitrary. By tightness of $\boldsymbol{\hat{u}}_n$ there exists $M>0$ such that $\mathbb{P}[\boldsymbol{\hat{u}}_n\notin B_M(0)]<\varepsilon$ $\forall n$. For each pattern $\mathfrak{p}\in\mathfrak{P}$ define $D^{\mathfrak{p}}:=\mathbf{patt}^{-1}(\mathfrak{p})\cap B_{M}(0)$.
 Further, let $B=CD^{\mathfrak{p}}+ \partial J'_{\boldsymbol{\lambda}}({\beta^0}; \mathfrak{p})$ and
$B_n=C_nD^{\mathfrak{p}}+ \partial J_{\boldsymbol{\lambda}^ n/\sqrt{n}}(\mathfrak{p}_0)$ with $\mathfrak{p}_0=\mathbf{patt}_{\beta^0}(\mathfrak{p})$, as in (\ref{u_n set general}). We have already established in Proposition \ref{key subdifferential proposition} that $d_H(\partial J_{\boldsymbol{\lambda}^ n/\sqrt{n}}(\mathfrak{p}_0),\partial J'_{\boldsymbol{\lambda}}({\beta^0}; \mathfrak{p}))\longrightarrow 0$. Moreover, since $C_n\overset{a.s.}{\longrightarrow} C$, we have $d_H(C_n D^{\mathfrak{p}},CD^{\mathfrak{p}})\overset{a.s.}{\longrightarrow} 0$.\footnote{ Boundedness of $D^{\mathfrak{p}}$ grants continuity of the map $A\mapsto d_{H}(A D^{\mathfrak{p}}, CD^{\mathfrak{p}})$ on $M_n(\mathbb{R})$ and thus measurability of $\omega\mapsto d_{H}(C_n(\omega) D^{\mathfrak{p}}, CD^{\mathfrak{p}})$.} It follows that $d_H(B_n,B)\overset{a.s.}{\longrightarrow}0$ and Lemma \ref{Hausdorff lemma} yields
\begin{equation*}
\mathbb{P}[\boldsymbol{\hat{u}}_n\in D^{\mathfrak{p}}] = \mathbb{P}[W_n\in B_n] \xrightarrow[n\rightarrow\infty]{} \mathbb{P}[W\in B] =\mathbb{P}[\boldsymbol{\hat{u}}\in D^{\mathfrak{p}}].
\end{equation*}
Hence
\begin{align*}
\limsup_{n\rightarrow\infty}\mathbb{P}[\boldsymbol{\hat{u}}_n\in \mathbf{patt}^{-1}(\mathfrak{p})]&\leq\limsup_{n\rightarrow\infty}\mathbb{P}[\boldsymbol{\hat{u}}_n\in D^{\mathfrak{p}}]+\varepsilon=\mathbb{P}[\boldsymbol{\hat{u}}\in D^{\mathfrak{p}}]+\varepsilon,
\\
\liminf_{n\rightarrow\infty}\mathbb{P}[\boldsymbol{\hat{u}}_n\in \mathbf{patt}^{-1}(\mathfrak{p})]&\geq\liminf_{n\rightarrow\infty}\mathbb{P}[\boldsymbol{\hat{u}}_n\in D^{\mathfrak{p}}]=\mathbb{P}[\boldsymbol{\hat{u}}\in D^{\mathfrak{p}}],
\end{align*}
 and letting $\varepsilon\downarrow 0$ yields the desired pattern  convergence.
\end{proof}

\subsection{Weak pattern recovery}
As a consequence of Theorem \ref{theorem pattern}, we can characterize the asymptotic distribution of the SLOPE pattern $\mathbf{patt}(\boldsymbol{\hat{\beta}}_n)$ in terms of $\boldsymbol{\hat{u}}$.
\begin{corollary}\label{asymptotic sampling corollary}
For any pattern $\mathfrak{p}\in\mathfrak{P}$ we have:
\begin{equation*}
    \mathbb{P}[\mathbf{patt}(\boldsymbol{\hat{\beta}}_n)=\mathfrak{p}]\xrightarrow[n\rightarrow\infty]{}\mathbb{P}[\mathbf{patt}_{\beta^0}(\boldsymbol{\hat{u}})=\mathfrak{p}].
\end{equation*}
\end{corollary}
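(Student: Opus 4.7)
The plan is to reduce the corollary to Theorem \ref{theorem pattern} by using $\mathbf{patt}_{\beta^0}(\boldsymbol{\hat{u}}_n)$ as a bridge between $\mathbf{patt}(\boldsymbol{\hat{\beta}}_n)$ (the target) and $\mathbf{patt}(\boldsymbol{\hat{u}}_n)$ (what Theorem \ref{theorem pattern} delivers). The three facts that do the work are: $\boldsymbol{\hat{\beta}}_n = \beta^0 + \boldsymbol{\hat{u}}_n/\sqrt{n}$, the factorisation of $\mathbf{patt}_{\beta^0}$ through $\mathbf{patt}$, and finiteness of $\mathfrak{P}$ (at most $(2p+1)^p$, since every coordinate of a pattern lies in $\{-p,\dots,p\}$).

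\textit{Step 1: convergence of $\mathbb{P}[\mathbf{patt}_{\beta^0}(\boldsymbol{\hat{u}}_n)=\mathfrak{p}]$.} Introducing the finite set $G_{\mathfrak{p}} := \{\mathfrak{q}\in\mathfrak{P} : \mathbf{patt}_{\beta^0}(\mathfrak{q})=\mathfrak{p}\}$, the preimage partitions as $\{\mathbf{patt}_{\beta^0}(u) = \mathfrak{p}\} = \bigsqcup_{\mathfrak{q}\in G_{\mathfrak{p}}}\mathbf{patt}^{-1}(\mathfrak{q})$, so Theorem \ref{theorem pattern} summed termwise gives
\[
\mathbb{P}[\mathbf{patt}_{\beta^0}(\boldsymbol{\hat{u}}_n) = \mathfrak{p}] \;=\; \sum_{\mathfrak{q}\in G_{\mathfrak{p}}} \mathbb{P}[\mathbf{patt}(\boldsymbol{\hat{u}}_n) = \mathfrak{q}] \;\xrightarrow[n\to\infty]{}\; \mathbb{P}[\mathbf{patt}_{\beta^0}(\boldsymbol{\hat{u}}) = \mathfrak{p}].
\]

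\textit{Step 2: asymptotic pattern agreement.} It remains to show $\mathbb{P}[\mathbf{patt}(\boldsymbol{\hat{\beta}}_n)\neq\mathbf{patt}_{\beta^0}(\boldsymbol{\hat{u}}_n)] \to 0$. The deterministic input is that for each $\mathfrak{q}\in\mathfrak{P}$ and every convex compact $K\subset D^{\mathfrak{q}} := \mathbf{patt}^{-1}(\mathfrak{q})$ there exists $N(K)$ with $\mathbf{patt}(\beta^0+u/\sqrt{n})=\mathbf{patt}_{\beta^0}(\mathfrak{q})$ for all $u\in K$ and $n\geq N(K)$: the pattern-determining gaps---differences between the distinct values of $|\beta^0|$, differences $|u_i-u_j|$ within nonzero clusters of $\beta^0$, and $||u_i|-|u_j||$ within the zero cluster of $\beta^0$---are continuous strictly positive functions on $D^{\mathfrak{q}}$, so they are bounded below on $K$. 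Fix $\eta>0$ and, for each of the finitely many $\mathfrak{q}$, pick a convex compact $K^{\mathfrak{q}}\subset D^{\mathfrak{q}}$ with $\mathbb{P}[\boldsymbol{\hat{u}}\in D^{\mathfrak{q}}\setminus K^{\mathfrak{q}}]<\eta/|\mathfrak{P}|$ via inner regularity inside the affine hull of $D^{\mathfrak{q}}$. Running the argument of the proof of Theorem \ref{theorem pattern} with $K^{\mathfrak{q}}$ in place of $D^{\mathfrak{p}}\cap B_M(0)$---uniqueness of the SLOPE minimizer yields the equivalence $\{\boldsymbol{\hat{u}}_n\in K^{\mathfrak{q}}\}\Leftrightarrow\{W_n\in C_n K^{\mathfrak{q}} + \partial J_{\boldsymbol{\lambda}^n/\sqrt{n}}(\mathbf{patt}_{\beta^0}(\mathfrak{q}))\}$ for $n\geq N(K^{\mathfrak{q}})$, and Lemma \ref{Hausdorff lemma} combined with Proposition \ref{key subdifferential proposition} transfers the convergence---yields $\mathbb{P}[\boldsymbol{\hat{u}}_n\in K^{\mathfrak{q}}]\to\mathbb{P}[\boldsymbol{\hat{u}}\in K^{\mathfrak{q}}]$. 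Summing over $\mathfrak{P}$ gives $\mathbb{P}[\boldsymbol{\hat{u}}_n\notin\bigsqcup_{\mathfrak{q}}K^{\mathfrak{q}}]<2\eta$ for all large $n$, and on $\{\boldsymbol{\hat{u}}_n\in K^{\mathfrak{q}}\}$ with $n\geq\max_{\mathfrak{q}}N(K^{\mathfrak{q}})$ both $\mathbf{patt}(\boldsymbol{\hat{\beta}}_n)$ and $\mathbf{patt}_{\beta^0}(\boldsymbol{\hat{u}}_n)$ equal $\mathbf{patt}_{\beta^0}(\mathfrak{q})$. Letting $\eta\downarrow 0$ and combining with Step 1 via a triangle inequality gives the corollary.

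The main obstacle is the probabilistic half of Step 2. Plain weak convergence of $\boldsymbol{\hat{u}}_n$ does not yield $\mathbb{P}[\boldsymbol{\hat{u}}_n\in K^{\mathfrak{q}}]\to\mathbb{P}[\boldsymbol{\hat{u}}\in K^{\mathfrak{q}}]$ for the non-open compact sets $K^{\mathfrak{q}}$, and when $\mathfrak{q}$ has zero entries the cell $D^{\mathfrak{q}}$ lies in a proper affine subspace, forcing both the inner-regular approximation and the set-wise convergence to be carried out in that subspace---a step that crucially reuses the Hausdorff/subdifferential machinery developed in Section 3 and in the proof of Theorem \ref{theorem pattern}.
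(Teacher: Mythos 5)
Your proposal is correct, and at bottom it follows the same route as the paper: reduce the corollary to Theorem \ref{theorem pattern} together with the fact that $\mathbf{patt}(\beta^0+u/\sqrt{n})$ stabilizes at the limiting pattern $\mathbf{patt}_{\beta^0}(u)$. The difference is one of rigor rather than strategy: the paper's own proof is a two-sentence ``asymptotic equivalence'' chain passing through the hybrid object $\mathbf{patt}(\beta^0+\boldsymbol{\hat{u}}/\sqrt{n})$, and it leaves implicit exactly the point you isolate, namely that the stabilization time depends on $u$ while $\boldsymbol{\hat{u}}_n$ is random and varies with $n$, so the stabilization must hold uniformly on sets carrying most of the mass. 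Your Step 2 supplies this: you pick compact convex subsets $K^{\mathfrak{q}}$ of the (convex, relatively open) pattern cells, bound the pattern-determining gaps below there, and re-run the proof of Theorem \ref{theorem pattern} --- the optimality equivalence as in (\ref{u_n set general}), Proposition \ref{key subdifferential proposition} and Lemma \ref{Hausdorff lemma} --- with $K^{\mathfrak{q}}$ in place of $D^{\mathfrak{p}}\cap B_M(0)$; your Step 1 then factors $\mathbf{patt}_{\beta^0}$ through $\mathbf{patt}$ over the finite fibre $G_{\mathfrak{p}}$, which is legitimate since $\mathbf{patt}_{\beta^0}(\mathfrak{q})$ is well defined. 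So what your write-up buys, relative to the paper, is a complete argument for the step the paper waves through. Two cosmetic repairs: when a nonzero cluster of $\beta^0$ contains both signs, the relevant within-cluster gap is $\vert s_iu_i-s_ju_j\vert$ with $s=\mathrm{sgn}(\beta^0)$ rather than $\vert u_i-u_j\vert$ (whether it vanishes is still a function of $\mathbf{patt}(u)$, so nothing changes); and inner regularity only gives a compact, not necessarily convex, subset of $D^{\mathfrak{q}}$, so take its convex hull, which remains compact and stays inside the convex cell.
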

\begin{proof}
By Theorem \ref{theorem pattern}, sampling a pattern $\mathfrak{p}\in\mathfrak{P}$ according to $\mathfrak{p}\sim\mathbf{patt}(\boldsymbol{\hat{u}}_n)$ is asymptotically equivalent to sampling $\mathfrak{p}\sim\mathbf{patt}(\boldsymbol{\hat{u}})$. Therefore, sampling a pattern $\mathfrak{p}\in\mathfrak{P}$ according to  $\mathfrak{p}\sim\mathbf{patt}(\boldsymbol{\hat{\beta}}_n)=\mathbf{patt}(\beta^0+\boldsymbol{\hat{u}}_n/\sqrt{n})$ is asymptotically equivalent to sampling according to $\mathfrak{p}\sim\mathbf{patt}(\beta^0+\boldsymbol{\hat{u}}/\sqrt{n})$, which is in turn asymptotically equivalent to sampling $\mathfrak{p}\sim\mathbf{patt}_{\beta^0}(\boldsymbol{\hat{u}})$.
\end{proof}

A natural question concerns the limiting probability of pattern recovery of the true signal vector $\beta^0$. More precisely, we're interested in $\mathbb{P}[\mathbf{patt}(\boldsymbol{\hat{\beta}}_n)=\mathbf{patt}(\beta^0)]$ as $n\rightarrow\infty$.  Using properties of $\boldsymbol{\hat{u}}$, we find a new way of proving the result given in Theorem 4.2 i) \cite{bogdan2022pattern}. Let $\mathcal{I}(\beta^0)=\{I_0, I_1, \dots,I_m\}$ and write the basis vectors $\mathbf{1}_{I_1},..,\mathbf{1}_{I_{m}}$ as columns in a ``pattern'' matrix $U_{\beta^0} = S_{\beta^0}(\mathbf{1}_{I_m}\vert\dots\vert\mathbf{1}_{I_{1}})$ and let $\Lambda_0 = S_{\beta^0}\Pi_{\beta^0}\boldsymbol{\lambda}$. (In the notation in \cite{bogdan2022pattern} $U_{\beta^0}^T \Lambda_0 = \widetilde{{\Lambda}_{b_0}}$ denotes the clustered parameter.) Also, by Proposition 2.1. in \cite{bogdan2022pattern}, the dual unit ball coincides with the subdifferential at zero $\{v\in\mathbb{R}^p: J_{\boldsymbol{\lambda}}^*(v)\leq1\}=\partial J_{\boldsymbol{\lambda}}(0)$ and the subdifferential satisfies 
\begin{align}\label{subdifferential and the dual norm}
    \partial J_{\boldsymbol{\lambda}}(\beta^0) &= \left\{v\in \mathbb{R}^p: v\in \partial J_{\boldsymbol{\lambda}}(0) \text{ and } U_{\beta^0}^T (v-\Lambda_0)=0\right\}\nonumber\\
    & = \partial J_{\boldsymbol{\lambda}}(0) \cap ( \Lambda_0 + \langle U_{\beta^0} \rangle^{\perp} ),
\end{align}
where $\langle U_{\beta^0} \rangle$ denotes the column span of $U_{\beta^0}$ and $\langle U_{\beta^0} \rangle^{\perp}$ its orthogonal complement. Equivalently, we have $ \partial J_{\boldsymbol{\lambda}}(\beta^0)-\Lambda_0 = (\partial J_{\boldsymbol{\lambda}}(0) -\Lambda_0) \cap  \langle U_{\beta^0} \rangle^{\perp} $ and in particular $\partial J_{\boldsymbol{\lambda}}(\beta^0)-\Lambda_0\perp \langle U_{\beta^0} \rangle$. Finally, notice the elegant relation between the limiting pattern $\mathbf{patt}_{\beta^0}(\cdot)$ and the pattern matrix $\langle U_{\beta^0} \rangle$:
\begin{equation}\label{limiting pattern and pattern matrix}
    \{u\in\mathbb{R}^p: \mathbf{patt}_{\beta^0}(u)=\mathbf{patt}(\beta^0)\} = \langle U_{\beta^0} \rangle.
\end{equation}
\begin{corollary}[Theorem 4.2 i) in \cite{bogdan2022pattern}] \label{recovery of the true signal}
The limiting probability of pattern recovery satisfies
\begin{align*}
     &\mathbb{P}\big[\mathbf{patt}(\boldsymbol{\hat{\beta}}_n)=\mathbf{patt}(\beta^0)\big]\underset{n\rightarrow\infty}{\longrightarrow}  \mathbb{P} \big[\boldsymbol{\hat{u}} \in \langle U_{\beta^0} \rangle\big] = \mathbb{P} \big[Z\in \partial J_{\boldsymbol{\lambda}}(0) \big],\\
    &Z\sim\mathcal{N}(C^{1/2}P C^{-1/2}\Lambda_0, \sigma^2 C^{1/2}(I-P)C^{1/2}),
\end{align*}
where $P=C^{1/2}U_{\beta^0}(U_{\beta^0}^TCU_{\beta^0})^{-1}U_{\beta^0}^TC^{1/2}$ is the projection matrix onto $C^{1/2}\langle U_{\beta^0}\rangle$. 
\end{corollary}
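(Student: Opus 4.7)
The plan is to first apply Corollary \ref{asymptotic sampling corollary} with the particular choice $\mathfrak{p}=\mathbf{patt}(\beta^0)$ and combine it with the identity (\ref{limiting pattern and pattern matrix}) to obtain
\[
\mathbb{P}\bigl[\mathbf{patt}(\boldsymbol{\hat{\beta}}_n)=\mathbf{patt}(\beta^0)\bigr]\longrightarrow \mathbb{P}\bigl[\mathbf{patt}_{\beta^0}(\boldsymbol{\hat{u}})=\mathbf{patt}(\beta^0)\bigr]=\mathbb{P}\bigl[\boldsymbol{\hat{u}}\in\langle U_{\beta^0}\rangle\bigr].
\]
This settles the first claimed equality; what remains is to recast the subspace event $\{\boldsymbol{\hat{u}}\in\langle U_{\beta^0}\rangle\}$ as an event on the Gaussian $W$.

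\textbf{Step 2 (Explicit solution on the subspace event).} Next I would solve the optimality condition on this event. By Proposition \ref{key subdifferential proposition}(ii) together with (\ref{limiting pattern and pattern matrix}), $\partial J'_{\boldsymbol{\lambda}}(\beta^0;\boldsymbol{\hat{u}})=\partial J_{\boldsymbol{\lambda}}(\beta^0)$ there, so the optimality condition (\ref{main optimality condition}) reduces to $W-C\boldsymbol{\hat{u}}\in\partial J_{\boldsymbol{\lambda}}(\beta^0)$. Applying $U_{\beta^0}^T$ and using that $\partial J_{\boldsymbol{\lambda}}(\beta^0)-\Lambda_0\subset\langle U_{\beta^0}\rangle^{\perp}$ from (\ref{subdifferential and the dual norm}), one gets $U_{\beta^0}^T C\boldsymbol{\hat{u}}=U_{\beta^0}^T(W-\Lambda_0)$; together with $\boldsymbol{\hat{u}}\in\langle U_{\beta^0}\rangle$ this forces
\[
\boldsymbol{\hat{u}}=U_{\beta^0}(U_{\beta^0}^T CU_{\beta^0})^{-1}U_{\beta^0}^T(W-\Lambda_0),\qquad Z:=W-C\boldsymbol{\hat{u}}.
\]
A direct check shows $U_{\beta^0}^T(Z-\Lambda_0)=0$ for this formula, so $Z$ automatically lies in $\Lambda_0+\langle U_{\beta^0}\rangle^{\perp}$. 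Invoking (\ref{subdifferential and the dual norm}) once more, I would argue that the event $\{\boldsymbol{\hat{u}}\in\langle U_{\beta^0}\rangle\}$ is equivalent to $\{Z\in\partial J_{\boldsymbol{\lambda}}(0)\}$: the forward direction uses $Z=W-C\boldsymbol{\hat{u}}\in\partial J_{\boldsymbol{\lambda}}(\beta^0)\subset\partial J_{\boldsymbol{\lambda}}(0)$, and the converse follows because $Z\in\partial J_{\boldsymbol{\lambda}}(0)\cap(\Lambda_0+\langle U_{\beta^0}\rangle^{\perp})=\partial J_{\boldsymbol{\lambda}}(\beta^0)=\partial J'_{\boldsymbol{\lambda}}(\beta^0;\boldsymbol{\hat{u}})$ promotes the candidate formula for $\boldsymbol{\hat{u}}$ into a true minimizer of $V$ by the uniqueness guaranteed by $C\succ 0$.

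\textbf{Step 3 (Gaussian law of $Z$).} To finish, set $Q:=CU_{\beta^0}(U_{\beta^0}^T CU_{\beta^0})^{-1}U_{\beta^0}^T$, so that $Z=(I-Q)W+Q\Lambda_0$. A quick manipulation shows $Q=C^{1/2}PC^{-1/2}$, with $P$ the orthogonal projection onto $C^{1/2}\langle U_{\beta^0}\rangle$ stated in the corollary. Writing $W=C^{1/2}\xi$ with $\xi\sim\mathcal{N}(0,\sigma^2 I)$, one obtains $Z=C^{1/2}(I-P)\xi+C^{1/2}PC^{-1/2}\Lambda_0$, and since $I-P$ is symmetric and idempotent this has precisely the announced distribution $\mathcal{N}(C^{1/2}PC^{-1/2}\Lambda_0,\sigma^2 C^{1/2}(I-P)C^{1/2})$.

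The main obstacle, in my view, lies in the two-sided equivalence in Step~2: one has to verify that the oblique-projection formula for $\boldsymbol{\hat{u}}$ genuinely produces the minimizer of $V$, and not merely a candidate, on the event $Z\in\partial J_{\boldsymbol{\lambda}}(0)$. Once the subdifferential characterization (\ref{subdifferential and the dual norm}) is leveraged to intersect $\partial J_{\boldsymbol{\lambda}}(0)$ with the affine subspace $\Lambda_0+\langle U_{\beta^0}\rangle^{\perp}$, everything else is routine linear algebra revolving around the $C^{1/2}$-conjugation that converts the oblique projector $Q$ into the orthogonal projector $P$.
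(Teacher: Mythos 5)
Your argument is correct, and it reaches the same terminal identity as the paper, but the middle step is organized differently. The paper never writes down the minimizer explicitly: after the identical Step 1 reduction and the identification $\partial J'_{\boldsymbol{\lambda}}(\beta^0;u)=\partial J_{\boldsymbol{\lambda}}(\beta^0)$ on $\langle U_{\beta^0}\rangle$, it rewrites the event as the set condition $W\in C\langle U_{\beta^0}\rangle+\partial J_{\boldsymbol{\lambda}}(\beta^0)$, whitens by $C^{-1/2}$, recenters by $\Lambda_0$, and splits $Y=PY+(I-P)Y$ using the orthogonality from (\ref{subdifferential and the dual norm}); the two resulting conditions collapse to $Z\in\partial J_{\boldsymbol{\lambda}}(0)$. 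You instead solve the KKT system restricted to the subspace, obtaining the oblique-projection formula $\boldsymbol{\hat{u}}=U_{\beta^0}(U_{\beta^0}^TCU_{\beta^0})^{-1}U_{\beta^0}^T(W-\Lambda_0)$, define $Z=(I-Q)W+Q\Lambda_0$ globally as a function of $W$, and prove the two-sided equivalence by KKT sufficiency plus uniqueness of the minimizer of $V$ (strict convexity from $C\succ0$). The pieces you lean on are exactly the ones the paper uses — Corollary \ref{asymptotic sampling corollary}, (\ref{limiting pattern and pattern matrix}), Proposition \ref{key subdifferential proposition}(ii), and (\ref{subdifferential and the dual norm}), together with $U_{\beta^0}^TCU_{\beta^0}\succ0$ — so the converse direction you flag as the main obstacle is indeed sound: on $\{Z\in\partial J_{\boldsymbol{\lambda}}(0)\}$ the candidate lies in $\langle U_{\beta^0}\rangle$, hence $\partial J'_{\boldsymbol{\lambda}}(\beta^0;u^*)=\partial J_{\boldsymbol{\lambda}}(\beta^0)\ni Z=W-Cu^*$, so $u^*$ satisfies (\ref{main optimality condition}) and must equal $\boldsymbol{\hat{u}}$. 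What your route buys is an explicit closed form for $\boldsymbol{\hat{u}}$ on the recovery event (a bonus the paper's set-algebra proof does not provide); what the paper's route buys is that it avoids the KKT-sufficiency/uniqueness verification, replacing it by purely set-level equivalences in whitened coordinates. Your Step 3 computation of the law of $Z$, via $Q=C^{1/2}PC^{-1/2}$ and $W=C^{1/2}\xi$, coincides with the paper's final identification.
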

\begin{proof}
By Corollary \ref{asymptotic sampling corollary} and (\ref{limiting pattern and pattern matrix}), we obtain:
\begin{equation*}
    \mathbb{P}[\mathbf{patt}(\boldsymbol{\hat{\beta}}_n)=\mathbf{patt}(\beta^0)] \longrightarrow \mathbb{P}[\mathbf{patt}_{\beta^0}(\boldsymbol{\hat{u}})=\mathbf{patt}(\beta^0)]=\mathbb{P} \big[\boldsymbol{\hat{u}} \in \langle U_{\beta^0} \rangle\big].
\end{equation*}
Moreover, by Proposition \ref{key subdifferential proposition} and (\ref{limiting pattern and pattern matrix}), $\forall u\in\langle U_{\beta^0} \rangle$; $\partial J'_{\boldsymbol{\lambda}}({\beta^0};u) = \partial J_{\boldsymbol{\lambda}}(\mathbf{patt}_{\beta^0}(u)) = \partial J_{\boldsymbol{\lambda}}(\beta^0) $. Consequently, the optimality condition (\ref{main optimality condition}) $W \in Cu + \partial J'_{\boldsymbol{\lambda}}({\beta^0};u)$ yields:
\begin{align}
\boldsymbol{\hat{u}} \in \langle U_{\beta^0} \rangle &\iff W \in C\langle U_{\beta^0} \rangle + \partial J_{\boldsymbol{\lambda}}(\beta^0)\nonumber\\
        & \iff C^{-1/2}W \in C^{1/2}\langle U_{\beta^0} \rangle + C^{-1/2}\partial J_{\boldsymbol{\lambda}}(\beta^0)\nonumber\\
        &\iff \underbrace{-C^{-1/2}\Lambda_0 + C^{-1/2}W}_{=:Y} \in C^{1/2}\langle U_{\beta^0} \rangle + C^{-1/2}(\partial
        J_{\boldsymbol{\lambda}}(\beta^0)-\Lambda_0).\label{equation which needs orthogonal decomposition}
\end{align}
Furthermore, (\ref{subdifferential and the dual norm}) implies $C^{1/2}\langle U_{\beta^0} \rangle \perp C^{-1/2}(\partial J_{\boldsymbol{\lambda}}(\beta^0)-\Lambda_0)$ and because $C^{1/2}\langle U_{\beta^0}  \rangle = \langle P \rangle$, we have $C^{-1/2}(\partial J_{\boldsymbol{\lambda}}(\beta^0)-\Lambda_0)\subset \langle P\rangle^{\perp} = \langle I-P\rangle$. Decomposing $Y = PY +(I-P)Y$, (\ref{equation which needs orthogonal decomposition}) splits into two conditions: $ PY \in C^{1/2}\langle U_{\beta^0}\rangle$ and $(I-P)Y\in C^{-1/2}(\partial J_{\boldsymbol{\lambda}}(\beta^0)-\Lambda_0)$. The first condition is always satisfied and the second is equivalent to $(I-P)Y\in C^{-1/2}(\partial J_{\boldsymbol{\lambda}}(0)-\Lambda_0)$ by (\ref{subdifferential and the dual norm}). Thus (\ref{equation which needs orthogonal decomposition}) yields
\begin{align*}
 \boldsymbol{\hat{u}} \in \langle U_{\beta^0} \rangle&\iff (I-P)Y\in C^{-1/2}(\partial
J_{\boldsymbol{\lambda}}(0)-\Lambda_0)\\
&\iff \Lambda_0 + C^{1/2} (I-P)(-C^{-1/2}\Lambda_0 + C^{-1/2}W)\in \partial
J_{\boldsymbol{\lambda}}(0)\\
& \iff C^{1/2} P C^{-1/2}\Lambda_0 + C^{1/2}(I-P)C^{-1/2}W \in \partial
J_{\boldsymbol{\lambda}}(0),
\end{align*}

and using that $W\sim\mathcal{N}(0,\sigma^2 C)$, the above Gaussian vector has expectation $C^{1/2}P C^{-1/2}\Lambda_0$ and covariance matrix $\sigma^2 C^{1/2}(I-P)C^{1/2}$, which finishes the proof.
\end{proof}

\subsection{Pattern attainability}
In the following section, we want to characterize all patterns that $\boldsymbol{\hat{u}}$ can attain. This will depend on the signal vector $\beta^0$ and the penalty vector $\boldsymbol{\lambda}$.
\begin{definition}
We shall say a pattern $\mathfrak{p}\in\mathfrak{P}$ is \textit{attainable} by $\boldsymbol{\hat{u}}$, if $\mathbb{P}[\mathbf{patt}(\boldsymbol{\hat{u}})=\mathfrak{p}]>0$.
\end{definition}

First, consider a partial ordering on the partitions of $\{1,..,p\}$, where $\mathcal{I}^1\preceq\mathcal{I}^2$ if every element (cluster) in $\mathcal{I}^1$ is a subset of some element in $\mathcal{I}^2$; in that case we will say that $\mathcal{I}^1$ is a \textit{refinement} of $\mathcal{I}^2$, for example $\{\{1,2\},\{3\},\{5\},\{4,6\}\}\preceq\{\{1,2,3\},\{4,5,6\}\}$.
For $u,v\in\mathbb{R}^p$ such that $\max_i\vert u_i\vert< \min\left\{\vert v_i-v_j\vert: v_i\neq v_j \right\}/2$, we have $\mathcal{I}(v+u)\preceq\mathcal{I}(v)$, and $\mathcal{I}(v+u)\preceq\mathcal{I}(u)$. 
In particular, for $\mathfrak{p}\in\mathfrak{P}$ and $\mathfrak{p}_0=\mathbf{patt}_{\beta^0}(\mathfrak{p})$ we have both $\mathcal{I}(\mathfrak{p}_0)\preceq\mathcal{I}(\mathfrak{p})$ and $\mathcal{I}(\mathfrak{p}_0)\preceq\mathcal{I}(\beta^0)$. 
 
 In order to determine, which patterns are attainable by $\boldsymbol{\hat{u}}$, we need to consider the dimension of $\partial J_{\boldsymbol{\lambda}}(\mathfrak{p})$. To that end, using (\ref{subdifferential as convex hull in clusters}) we rewrite
 \begin{align*}
     \partial J_{\boldsymbol{\lambda}}(\mathfrak{p}) &= (\partial J_{\boldsymbol{\lambda}}(\mathfrak{p})-S_{\mathfrak{p}}\Pi_{\mathfrak{p}}\boldsymbol{\lambda})+S_{\mathfrak{p}}\Pi_{\mathfrak{p}}\boldsymbol{\lambda}
     \\
     &=con\{ S_{\mathfrak{p}}(\mathcal{S}(\mathfrak{p})-\mathbb{I})\Pi_{\mathfrak{p}}\boldsymbol{\lambda}\}+S_{\mathfrak{p}}\Pi_{\mathfrak{p}}\boldsymbol{\lambda}
     \\
    &=con\{S_{\mathfrak{p}}(\mathcal{S}^{+/-}_{I_0}-\mathbb{I}_{I_0})\Pi_{\mathfrak{p}}\boldsymbol{\lambda}\}\oplus
    ..\oplus con\{ S_{\mathfrak{p}}(\mathcal{S}_{I_m}-\mathbb{I}_{I_m})\Pi_{\mathfrak{p}}\boldsymbol{\lambda}\}+S_{\mathfrak{p}}\Pi_{\mathfrak{p}}\boldsymbol{\lambda}.
 \end{align*}
For $j\in\{1,..,m\}$, we have $dim\{con\{S_{\mathfrak{p}}(\mathcal{S}_{I_j}-\mathbb{I}_{I_j})\Pi_{\mathfrak{p}}\boldsymbol{\lambda}\}\}=\vert I_j \vert-1$, unless $\boldsymbol{\lambda}$ is constant on $I_j$. If $\boldsymbol{\lambda}$ is constant on $I_j$, the convex hull equals $0$ and is zero-dimensional. For the zero cluster $dim\{ con\{S_{\mathfrak{p}}(\mathcal{S}^{+/-}_{I_0}-\mathbb{I}_{I_0})\Pi_{\mathfrak{p}}\boldsymbol{\lambda}\}\}=\vert I_0\vert$, whenever $\boldsymbol{\lambda}$ is not constant zero on $I_0$, otherwise the convex hull is $0$. Consequently, for any $\mathfrak{p}\in\mathfrak{P}$:
\begin{align}\label{subdifferential dimension}
    dim \partial J_{\boldsymbol{\lambda}}(\mathfrak{p})&\leq  p-\vert\mathcal{I}(\mathfrak{p})\vert+1,
\end{align} with equality if and only if $\boldsymbol{\lambda}$ is non-constant on $I_j\in \{I_1,..,I_m\}$ and not equal to zero everywhere on $I_0$. Also, consider a set of the form:
\begin{equation}\label{D set pattern}
    D^{\mathfrak{p}}=\left\{u\in\mathbf{patt}^{-1}(\mathfrak{p}): \vert u_i\vert\in K_j \text{ for } i\in I_j, j=1,..,m_\mathfrak{p} \right\},
\end{equation}
 where $K_1,..,K_{m_\mathfrak{p}}$ are ordered compact disjoint positive intervals corresponding to the non-zero clusters in $\mathcal{I}(\mathfrak{p})=\{I_0,I_1,..,I_{m_\mathfrak{p}}\}$ and $K_0=\{0\}$. We denote the family of sets of the form (\ref{D set pattern}) by $\mathcal{D}^{\mathfrak{p}}$. Using the refined partition $\{I_0, I_1^+, I_1^{-},..., I_{m_\mathfrak{p}}^{+}, I_{m_\mathfrak{p}}^{-}\}$ we can also represent any $D^{\mathfrak{p}}\in\mathcal{D}^{\mathfrak{p}}$ explicitly as
 \begin{align}
     D^{\mathfrak{p}}&=\left\{ 0_{I_0}\oplus\alpha_1(\mathbf{1}_{I_1^+}-\mathbf{1}_{I_1^-})\oplus...\oplus\alpha_m(\mathbf{1}_{I_{m_\mathfrak{p}}^+}-\mathbf{1}_{I_{m_\mathfrak{p}}^-}): \alpha_i\in K_i, i=1,..,m_\mathfrak{p}\right\}\nonumber
     \\
     &=S_{\mathfrak{p}}\left\{\alpha_1\mathbf{1}_{I_1}\oplus...\oplus\alpha_m\mathbf{1}_{I_{m}}:\alpha_i\in K_i, i=1,..,m_\mathfrak{p}\right\}\label{D set representation},
 \end{align}
 where $\mathbf{1}_{I}$ denotes the vector in $\mathbb{R}^p$ with $1$'s on $I$ and zeros everywhere else. We can characterize all attainable patterns by $\boldsymbol{\hat{u}}$:
 
 \begin{theorem}\label{pattern attainability theorem}
 A pattern $\mathfrak{p}\in\mathfrak{P}$ with $\mathcal{I}(\mathfrak{p})=\{I_0,I_1,..,I_m\}$ is attainable by $\boldsymbol{\hat{u}}$ if and only if
 \begin{align*}
     i)\hspace{0,2cm}&\mathcal{I}(\mathfrak{p})\preceq\mathcal{I}(\beta^0),
     \\ 
     ii) \hspace{0,2cm}& sgn(\mathfrak{p})\odot sgn(\beta^0) \text{ is constant on } I_j\in\mathcal{I}(\mathfrak{p})\hspace{0,2cm}\forall j\neq 0, 
     \\
     iii)\hspace{0,2cm}&\boldsymbol{\lambda} \text{ is not constant on } I_j\in\mathcal{I}(\mathfrak{p})\hspace{0,2cm}\forall j\neq 0,
     \\
     iv)\hspace{0,2cm}&\boldsymbol{\lambda} \text{ is not constant zero on } I_0\in\mathcal{I}(\mathfrak{p}). 
 \end{align*}
\end{theorem}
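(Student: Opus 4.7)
The plan is to reformulate $\{\mathbf{patt}(\boldsymbol{\hat{u}})=\mathfrak{p}\}$ via the optimality condition and reduce attainability to positive Lebesgue measure of an explicit Minkowski sum. By (\ref{u_n set general}), $\mathbf{patt}(\boldsymbol{\hat{u}})=\mathfrak{p}$ is equivalent to $W\in C\,\mathbf{patt}^{-1}(\mathfrak{p})+\partial J_{\boldsymbol{\lambda}}(\mathfrak{p}_0)$ with $\mathfrak{p}_0=\mathbf{patt}_{\beta^0}(\mathfrak{p})$. Since $W\sim\mathcal{N}(0,\sigma^2 C)$ has a strictly positive continuous density on $\mathbb{R}^p$, $\mathfrak{p}$ is attainable if and only if this Minkowski sum has positive $p$-dimensional Lebesgue measure.

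I would then analyze the dimensions of the two summands. By the representation in (\ref{D set representation}), $\mathbf{patt}^{-1}(\mathfrak{p})$ is a relatively open $m$-dimensional subset of the subspace $\langle U_{\mathfrak{p}}\rangle$, and positive definiteness of $C$ preserves dimension, so $\dim(C\,\mathbf{patt}^{-1}(\mathfrak{p}))=m$. By the cluster-wise decomposition (\ref{subdifferential as convex hull in clusters}) and the count (\ref{subdifferential dimension}), $\dim\partial J_{\boldsymbol{\lambda}}(\mathfrak{p}_0)\leq p-|\mathcal{I}(\mathfrak{p}_0)|+1$, with equality precisely when $\boldsymbol{\lambda}$ is non-constant on each non-zero cluster of $\mathfrak{p}_0$ and not identically zero on its zero cluster. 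Positive Lebesgue measure of the Minkowski sum requires $m+\dim\partial J_{\boldsymbol{\lambda}}(\mathfrak{p}_0)\geq p$, which combined with $\mathcal{I}(\mathfrak{p}_0)\preceq\mathcal{I}(\mathfrak{p})$ (observed before (\ref{D set pattern})) forces $\mathcal{I}(\mathfrak{p}_0)=\mathcal{I}(\mathfrak{p})$ together with dimension-maximality of the subdifferential.

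Next I would translate these two requirements into the four conditions. Using $|\beta^0_i+\varepsilon\mathfrak{p}_i|=|\beta^0_i|+\varepsilon\,sgn(\beta^0_i)\,sgn(\mathfrak{p}_i)\,|\mathfrak{p}_i|$ on a non-zero cluster of $\beta^0$ and $\varepsilon|\mathfrak{p}_i|$ on its zero cluster, one checks that condition i) prevents a non-zero cluster $I_j$ of $\mathfrak{p}$ from being split by differing baseline magnitudes of $\beta^0$, and condition ii) prevents splitting by sign mismatches inside a single non-zero $\beta^0$-cluster. Together, i) and ii) are equivalent to $\mathcal{I}(\mathfrak{p}_0)=\mathcal{I}(\mathfrak{p})$ with matching zero/non-zero identification. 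Conditions iii) and iv) are then precisely the two equality conditions in (\ref{subdifferential dimension}), read off from the non-zero and zero clusters of $\mathfrak{p}_0$, which under i)–ii) coincide with the clusters of $\mathfrak{p}$.

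Finally, sufficiency requires not only the dimension identity but also transversality. By (\ref{subdifferential and the dual norm}) applied to $\mathfrak{p}_0$, the affine hull of $\partial J_{\boldsymbol{\lambda}}(\mathfrak{p}_0)$ lies in $\Lambda_{\mathfrak{p}_0}+\langle U_{\mathfrak{p}_0}\rangle^{\perp}=\Lambda_{\mathfrak{p}_0}+\langle U_{\mathfrak{p}}\rangle^{\perp}$, and equals this affine subspace under i)–iv) by dimension-maximality. A short argument using $C\succ 0$ shows $C\langle U_\mathfrak{p}\rangle\cap\langle U_\mathfrak{p}\rangle^{\perp}=\{0\}$: if $u\in\langle U_\mathfrak{p}\rangle$ with $Cu\perp\langle U_\mathfrak{p}\rangle$, then $u^{T}Cu=0$ forces $u=0$. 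Hence $\mathbb{R}^p=C\langle U_\mathfrak{p}\rangle\oplus\langle U_\mathfrak{p}\rangle^{\perp}$, and combined with the dimension identity this upgrades the Minkowski sum to contain a nonempty open set, yielding positive Lebesgue measure. The main obstacle I expect is the combinatorial bookkeeping in the preceding paragraph — verifying that the four conditions are exactly calibrated, with no slack in either direction, to produce $\mathcal{I}(\mathfrak{p}_0)=\mathcal{I}(\mathfrak{p})$ together with maximal subdifferential dimension.
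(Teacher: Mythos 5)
Your proposal is correct and follows essentially the same route as the paper: reduce attainability to full dimensionality of the Minkowski sum $CD^{\mathfrak{p}}+\partial J_{\boldsymbol{\lambda}}(\mathfrak{p}_0)$, obtain necessity from the dimension count in (\ref{subdifferential dimension}) together with the equivalence of i)--ii) with $\mathcal{I}(\mathfrak{p}_0)=\mathcal{I}(\mathfrak{p})$, and obtain sufficiency from transversality via $C\succ0$. The only (cosmetic) difference is in the transversality step, where you invoke the orthogonality $\partial J_{\boldsymbol{\lambda}}(\mathfrak{p}_0)-\Lambda_{\mathfrak{p}_0}\perp\langle U_{\mathfrak{p}_0}\rangle$ from (\ref{subdifferential and the dual norm}) rather than expanding over the symmetry group $\mathcal{S}(\mathfrak{p}_0)$ as the paper does, but both hinge on the same $\alpha^{T}C\alpha>0$ contradiction.
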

 
\begin{remark}\label{attainability remark}
Condition $i)$ asserts that, with probability one, the clusters of $\boldsymbol{\hat{u}}$ are contained within the clusters of $\beta^0$. The first two conditions $i)$ and $ii)$ together are equivalent to $\mathcal{I}(\mathfrak{p})=\mathcal{I}(\mathfrak{p}_0)$, where $\mathfrak{p}_0 = \mathbf{patt}_{\beta^0}(\mathfrak{p})$ is the limiting pattern of $\mathfrak{p}$ w.r.t. $\beta^0$. Indeed, if either $i)$ or $ii)$ is violated then $\mathcal{I}(\mathfrak{p}_0)\prec\mathcal{I}(\mathfrak{p})$ is a proper refinement, see Example \ref{pattern example}.
\end{remark}

\begin{proof}
Let $\mathfrak{p}_0 = \mathbf{patt}_{\beta^0}(\mathfrak{p})$ be the limiting pattern of $\mathfrak{p}$ w.r.t. $\beta^0$. Following the notation in (\ref{u_n set general}) and (\ref{D set pattern}), a pattern $\mathfrak{p}\in\mathfrak{P}$ is attainable by $\boldsymbol{\hat{u}}$ if and only if $0<\mathbb{P}[\boldsymbol{\hat{u}}\in D^{\mathfrak{p}}]=\mathbb{P} [W\in B]$, where $W\sim \mathcal{N}(0,\sigma^2C)$, $D^{\mathfrak{p}}\in\mathcal{D}^{\mathfrak{p}}$ and
\begin{align*}
    B&=CD^{\mathfrak{p}}+ \partial J'_{\boldsymbol{\lambda}}({\beta^0}; \mathfrak{p})\\
    &=CD^{\mathfrak{p}}+ \partial J_{\boldsymbol{\lambda}}(\mathfrak{p}_0),
\end{align*}
which is a consequence of (\ref{subdifferential identity for generalized penalty}). Therefore, a pattern $\mathfrak{p}$ is attainable if and only if $B$ is a full-dimensional subset in $\mathbb{R}^p$. From (\ref{D set representation}), we have $dim C D^{\mathfrak{p}}=dim D^{\mathfrak{p}}= \vert\mathcal{I}(\mathfrak{p})\vert-1$ and from (\ref{subdifferential dimension}) we get $dim \partial J_{\boldsymbol{\lambda}}(\mathfrak{p}_0)\leq  p-\vert\mathcal{I}(\mathfrak{p}_0)\vert+1$. It follows
\begin{align}
    dim B &\leq dim CD^{\mathfrak{p}}+ dim \partial J_{\boldsymbol{\lambda}}(\mathfrak{p}_0)\nonumber
    \\
    &\leq p -\left(\vert\mathcal{I}(\mathfrak{p}_0)\vert- \vert\mathcal{I}(\mathfrak{p})\vert\right)\label{lambda inequality}
    \\
    &\leq p.\label{refinement inequality}
\end{align}
If $i)$ or $ii)$ is violated, then by Remark \ref{attainability remark} we have  $\mathcal{I}(\mathfrak{p}_0)\prec\mathcal{I}(\mathfrak{p})$, thus the inequality in (\ref{refinement inequality}) is strict and $dim B < p$. If $iii)$ or $iv)$ are violated, the inequality in (\ref{lambda inequality}) is strict. Hence $i) - iv)$ are necessary for attainability. 

If $i) - iv)$ are satisfied, then (\ref{refinement inequality}) and (\ref{lambda inequality}) are equalities. Therefore, to see that $i) - iv)$ are sufficient for attainability, it remains to show that

\begin{equation*}
    dim B = dim CD^{\mathfrak{p}}+ dim \partial J_{\boldsymbol{\lambda}}(\mathfrak{p}_0).
\end{equation*}
This is a completely general fact, which is valid without any assumptions on $\mathfrak{p}, \beta^0$ or $\boldsymbol{\lambda}$. It suffices to argue that $span CD^{\mathfrak{p}}\cap span (\partial J_{\boldsymbol{\lambda}}(\mathfrak{p}_0)-S_{\mathfrak{p}_0}\Pi_{\mathfrak{p}_0}\boldsymbol{\lambda})=0$, which will guarantee linear independence of the spans and additivity of their dimensions\footnote{ The affine shift of $\partial J_{\boldsymbol{\lambda}}(\mathfrak{p}_0)$ by $S_{\mathfrak{p}_0}\Pi_{\mathfrak{p}_0}\boldsymbol{\lambda}$ does not affect the dimension.}. We set
\begin{align*}
    V(\mathfrak{p}_0)&=span (\partial J_{\boldsymbol{\lambda}}(\mathfrak{p}_0)-S_{\mathfrak{p}_0}\Pi_{\mathfrak{p}_0}\boldsymbol{\lambda})
    \\
    &=span\{ S_{\mathfrak{p}_0}(\mathcal{S}(\mathfrak{p}_0)-\mathbb{I})\Pi_{\mathfrak{p}_0}\boldsymbol{\lambda}\}.
\end{align*}
Assume that the above intersection is non-trivial, meaning that there exists a non-zero vector $\boldsymbol{\alpha}=S_{\mathfrak{p}}(\alpha_1\mathbf{1}_{I_1}\oplus...\oplus\alpha_m\mathbf{1}_{I_{m}})$ in $D^{\mathfrak{p}}$ (representation (\ref{D set representation})), such that $C\boldsymbol{\alpha}$ lies in $V(\mathfrak{p}_0)$. Then there are coefficients $\{\beta_{\Sigma}:\Sigma\in\mathcal{S}(\mathfrak{p}_0)\}$ such that
\begin{equation*}
    C\boldsymbol{\alpha}=\sum_{\Sigma\in\mathcal{S}(\mathfrak{p}_0)}\beta_{\Sigma} S_{\mathfrak{p}_0}(\Sigma-\mathbb{I})\Pi_{\mathfrak{p}_0}\boldsymbol{\lambda}.
\end{equation*} 
 Also, one can check that $S_{\mathfrak{p}_0}S_{\mathfrak{p}}$ has a constant sign on each of the non zero clusters of $\mathfrak{p}_0$. It follows that $S_{\mathfrak{p}_0}\boldsymbol{\alpha}$ is constant on each cluster of $\mathfrak{p}_0$. Therefore, $(S_{\mathfrak{p}_0}\boldsymbol{\alpha})^T(\Sigma-\mathbb{I})=0$ for any $\Sigma\in\mathcal{S}(\mathfrak{p}_0)$.  This leads to a contradiction:
\begin{equation*}
    0<\left\langle\boldsymbol{\alpha}, C\boldsymbol{\alpha}\right\rangle=
    \sum_{\Sigma\in\mathcal{S}(\mathfrak{p}_0)} \beta_{\Sigma} \left\langle S_{\mathfrak{p}_0}\boldsymbol{\alpha}, (\Sigma-\mathbb{I})\Pi_{\mathfrak{p}_0}\boldsymbol{\lambda} \right\rangle=0.
\end{equation*}
\end{proof}


\subsection{Asymptotic FDR control}

 We can harness the weak pattern convergence from Theorem \ref{theorem pattern} to characterize the asymptotic false discovery rate of the SLOPE estimator $\boldsymbol{\hat{\beta}}_n$ as $n$ goes to infinity. Recall the notation from Theorem \ref{sqrt-asymptotic}, where $\boldsymbol{\hat{u}}_n$ and $\boldsymbol{\hat{u}}$ are the minimizers of (\ref{V_n(u)}) and (\ref{V(u)}) respectively and $\boldsymbol{\hat{\beta}}_n=\beta^0+\boldsymbol{\hat{u}}_n/\sqrt{n}$. Moreover, under the scaling $\boldsymbol{\lambda}^n/\sqrt{n}\rightarrow\boldsymbol{\lambda}\geq0$, $\boldsymbol{\hat{u}}_n$ converges to $\boldsymbol{\hat{u}}$ both weakly and weakly in pattern. By Corollary \ref{asymptotic sampling corollary}, sampling a random pattern according to $\mathbf{patt}(\boldsymbol{\hat{\beta}}_n)$ is asymptotically equivalent to sampling a limiting pattern $\boldsymbol{\hat{\mathfrak{b}}}:=\mathbf{patt}_{\beta^0}(\boldsymbol{\hat{u}})$. 
 
 Given the true signal $\beta^0$, we formulate $p$ null-hypotheses $H_{0i}: \beta^0_i=0$; and reject $H_{0i}$ whenever $\boldsymbol{\hat{\beta}}_{n,i}\neq0$. Let $R^n=\vert\{i:\boldsymbol{\hat{\beta}}_{n,i}\neq0\}\vert$ denote the number of rejections and $V^n=\vert\{i:\boldsymbol{\hat{\beta}}_{n,i}\neq0, \beta^0_i=0\}\vert$ the number of false rejections. Similarly, let $R=\vert\{i:\boldsymbol{\hat{\mathfrak{b}}}_{i}\neq0\}\vert$ and $V=\vert\{i:\boldsymbol{\hat{\mathfrak{b}}}_{i}\neq0, \beta^0_i=0\}\vert$ be the corresponding numbers of rejections and false rejections in the limiting experiment $\boldsymbol{\hat{\mathfrak{b}}}$. Asymptotic FDR of $\boldsymbol{\hat{\beta}}_{n}$ is matched by the FDR of $\boldsymbol{\hat{\mathfrak{b}}}$: 
\begin{align*}
   \text{FDR}(\boldsymbol{\hat{\beta}}_{n})
   =\mathbb{E}\left[\dfrac{V^n}{1\vee R^n}\right]
     \xrightarrow[n\rightarrow\infty]{}  \mathbb{E}\left[\dfrac{V}{1\vee R}\right]
   =\text{FDR}(\boldsymbol{\hat{\mathfrak{b}}}).
\end{align*}
Moreover, note that by consistency $\boldsymbol{\hat{\mathfrak{b}}}_i\neq 0$, whenever $\beta^0_i\neq 0$, and $\boldsymbol{\hat{\mathfrak{b}}}$ has full power.


\begin{example}\label{example FDR control}
Suppose $C=\mathbb{I}_p$, and let $p_0=\vert I_0\vert$. The objective function (\ref{V(u)}) simplifies to 
\begin{align}
V(u)&=\dfrac{1}{2}u^{T}Cu-u^{T}W+J'_{\boldsymbol{\lambda}}({\beta^0};u)\\
&=\dfrac{1}{2} \Vert W-u \Vert_2^2-\dfrac{1}{2}\Vert W\Vert^2_2+J'_{\boldsymbol{\lambda}}({\beta^0};u),\nonumber
\end{align}
where $W\sim\mathcal{N}(0,\sigma^2\mathbb{I}_p)$. The penalty $J'_{\boldsymbol{\lambda}}({\beta^0};u)$ is separable (see Appendix \ref{proximal operator}), and it suffices to consider the minimization problem on $I_0$: $\Vert W_{I_0}-u_{I_0}\Vert^2/2+J^{I_0}_{\boldsymbol{\lambda}}(u)$, with the smallest $p_0$ lambdas appearing in the penalty. On $I_0$, the rejection procedure $\boldsymbol{\hat{\mathfrak{b}}}$ is determined by the restricted optimization and outside of $I_0$, it rejects with probability one. 

Furthermore, it is shown in \cite{bogdan2015slope} that minimization of

\begin{equation}\label{FDR orthogonal design}
    \dfrac{1}{2} \Vert \tilde{W}-\beta \Vert_2^2+J_{\boldsymbol{\lambda}}(\beta),
\end{equation}
with $\tilde{W}\sim\mathcal{N}(\beta^0,\sigma^2\mathbb{I}_p)$, and $\lambda_i$ given by the BHq coefficients:
\begin{equation*}
    \lambda_i=\sigma\lambda^{BH(q)}_i=\sigma\Phi^{-1}\left(1-\dfrac{iq}{2p}\right),
\end{equation*}
controls the FDR at level $q(p_0/p)$. The FDR control holds for any magnitudes of $\beta^0$. Letting $\vert\beta^0_i\vert\rightarrow\infty$ $\forall i\notin I_0$, the rejection procedure (\ref{FDR orthogonal design}) separates into $I_0$ and $I_0^{c}$ and is asymptotically equivalent to $\boldsymbol{\hat{\mathfrak{b}}}$, whence it follows that $\text{FDR}(\boldsymbol{\hat{\mathfrak{b}}})\leq q(p_0/p)$.

In particular, FDR($\boldsymbol{\hat{\beta}}_{n}$) is asymptotically controlled at level $q(p_0/p)$:
\begin{equation*}
    \text{FDR}(\boldsymbol{\hat{\beta}}_{n})
     \xrightarrow[n\rightarrow\infty]{}
   \text{FDR}(\boldsymbol{\hat{\mathfrak{b}}})\leq q\dfrac{p_0}{p}.
\end{equation*}

\begin{remark}
Note that the covariance structure among the true signals does not matter for the asymptotic FDR control. To see this, consider a block diagonal covariance $C=\mathbb{I}_{I_0}\oplus \Sigma$, with identity matrix $\mathbb{I}_{I_0}$ on $I_0$ and an arbitrary positive definite $\Sigma$ on $I_0^c$. The optimization (\ref{V(u)}) separates into $I_0$ and $I_0^c$ and the rejection procedure by $\boldsymbol{\hat{\mathfrak{b}}}_{\Sigma}=\mathbf{patt}_{\beta^0}(\boldsymbol{\hat{u}})$ does not depend on $\Sigma$, as it rejects with probability one on $I_0^c$. 
Hence $\text{FDR}(\boldsymbol{\hat{\beta}}_{n})
     \longrightarrow\text{FDR}(\boldsymbol{\hat{\mathfrak{b}}}_{\Sigma})=
   \text{FDR}(\boldsymbol{\hat{\mathfrak{b}}})\leq q(p_0/p).$
\end{remark}
\end{example}


\subsection{General penalty and pattern}\label{General penalty and pattern}
The arguments so far, have predominantly relied on the fact that the SLOPE norm is a maximum over finitely many linear functions $J_{\boldsymbol{\lambda}}(\beta)=\operatorname{max}\{\langle \Sigma\boldsymbol{\lambda},\beta \rangle: \Sigma\in \{\pm 1\}^n\cdot\mathcal{S}_p\}$. This is an example of a polyhedral gauge, which takes the form $f(x):=\max\{0, v_1^Tx,\dots,v_p^Tx\}$ for some $v_1,\dots,v_p\in\mathbb{R}^p$. Penalties given by polyhedral norms/gauges and the related notion of a model pattern is explored in \cite{tardivel:hal-03262087}. In what follows, we show how our results on weak pattern convergence extend to more general penalties, including polyhedral norms. 

To that end, let $f=\max\{f_1,..,f_N\}$ with convex and differentiable functions $f_1,\dots,f_N$. We  have seen in (\ref{subdiff for the max function}) that $\partial f(x)=con\{\nabla f_i(x):i\in I(x)\}$, where $I(x):=\{i:f_i(x)=f(x)\}$. 
This motivates the following definition of a pattern for a function $f$ at a point $x$.
\begin{definition}
    Given $f=\max\{f_1,..,f_N\}$, we define a \textit{pattern} of $f$ as an equivalence relation on $\mathbb{R}^p$ given by $x\sim_f y \iff I(x) = I(y)$, where $I(x)=\{i:f_i(x)=f(x)\}$. We also say that $I(x)\subset\{1,\dots, N\}$ is the pattern of $f$ at $x$. The set of all patterns $\mathfrak{P}_f$ is the powerset of $\{1,\dots, N\}$.
\end{definition}
 
Further, the directional derivative of $f$ at $x$ in direction $u$ satisfies
\begin{equation}\label{directional derivative equation}
    f'(x;u) = \sup\limits_{g\in\partial f(x)}\langle g,u \rangle = \max\limits_{i\in I(x)}\langle \nabla f_i(x),u\rangle,
\end{equation}
thus $\partial_u f'(x;u)=con\{\nabla f_i(x):i\in I_x(u)\}$, where $I_x(u):=\{i\in I(x): \langle \nabla f_i(x),u\rangle= f'(x;u)\}$.  We call $I_x(u)$ the \textit{limiting pattern} of $u$ with respect to $x$.

As in Theorem \ref{sqrt-asymptotic}, the minimizer 

\begin{equation*}
    \widehat{\boldsymbol{u}}_{f,n} = \operatorname{argmin}_u u^{T}C_n u/2-u^{T}W_n+ n^{1/2}[f(\beta^0+u\sqrt{n})-f(\beta^0)],
\end{equation*}
converges in distribution to 
\begin{equation*}
    \widehat{\boldsymbol{u}}_{f}= \operatorname{argmin}_u {u^{T}Cu/2-u^{T}W+ f'({\beta^0};u)}.
\end{equation*}

It is typical for a penalty $f$ with non-linear $f_i$'s that the general pattern $I(\widehat{\boldsymbol{u}}_{f,n})$ converges weakly to $I(\widehat{\boldsymbol{u}}_{f})$, but the SLOPE pattern $\mathbf{patt}(\widehat{\boldsymbol{u}}_{f,n})$ does not converge weakly to $\mathbf{patt}(\widehat{\boldsymbol{u}}_{f})$, see Example \ref{counterexample pattern convergence} in the Appendix.

To ensure convergence of the SLOPE pattern, we assume $f=\max\{f_1,..,f_N\}$, where $f_i(x)=\langle v_i, x\rangle + g(x)$, with $v_i\in\mathbb{R}^p$ and $g$ smooth and convex. This includes all penalties of the type $J_{\boldsymbol{\lambda}}(x)+\sum_{i=1}^p \alpha_i\vert x_i\vert +\gamma_i  x_i^2$, with $\alpha_i,\gamma_i\in\mathbb{R}\hspace{0,1cm}\forall i$, which encompasses for instance the LASSO, SLOPE, elastic net, quadratic loss, and any combination thereof. 
For such $f$, there is a one-to-one correspondence between patterns and sets of constant subdifferential: $\partial f(x) = \partial f(y) \iff I(x)=I(y)$, and a subdifferential at a pattern $\partial f(I)$ is well defined. Also, if $I(u)=I(v)$, then also $I_x(u)=I_x(v)$ and we can write $I_x(I)$ for the limiting pattern of $I\in\mathfrak{P}_f$ w.r.t. $x$.
A direct consequence of (\ref{directional derivative equation}) is that $\partial_u f'(x; u) = \partial f (I_x(u))$, generalizing (\ref{subdifferential identity for generalized penalty}). Moreover, for a sequence of functions $f_n=\max\{f_{1,n},..,f_{N,n}\}$, under suitable conditions on $f_n$ (such as $f_n = n^{1/2}f$), we have $\partial_u f_n(x+u/\sqrt{n})\overset{d_H}{\longrightarrow}\partial_u f'(x;u)$, generalizing (\ref{Hausdorff subdifferential convergence}). Crucially, linearity assumptions on $f$ grant that $I(x+u/\sqrt{n})$ stabilizes at $I_x(u)$, which allows for the Hausdorff convergence of subdifferentials. 

\begin{theorem}
For every convex set $\mathcal{K}\subset\mathbb{R}^p$: $\mathbb{P}[\widehat{\boldsymbol{u}}_{f,n}\in\mathcal{K}]\longrightarrow\mathbb{P}[\widehat{\boldsymbol{u}}_{f}\in\mathcal{K}]$ as $n\rightarrow\infty$. In particular, $I(\widehat{\boldsymbol{u}}_{f,n})$ converges weakly to $I(\widehat{\boldsymbol{u}}_{f})$ and  $\mathbf{patt}(\widehat{\boldsymbol{u}}_{f,n})$ converges weakly to $\mathbf{patt}(\widehat{\boldsymbol{u}}_{f})$.
\end{theorem}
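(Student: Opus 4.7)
The plan is to extend the Hausdorff-distance argument of Theorem \ref{theorem pattern} to the penalty $f = g + f_0$, where $f_0(x) := \max_i \langle v_i, x\rangle$ is the polyhedral part. The excerpt already gives the weak convergence $\widehat{\boldsymbol{u}}_{f,n} \overset{d}{\longrightarrow} \widehat{\boldsymbol{u}}_f$, which supplies tightness; fixing $\varepsilon > 0$ and $M > 0$ large enough, I may restrict to $\mathcal{K} \cap \overline{B_M(0)}$ up to an $O(\varepsilon)$ event. Since $f_i(x) - f_j(x) = \langle v_i - v_j, x\rangle$ is linear, the cells $\mathbf{patt}^{-1}(I)$ are relatively open polyhedra and hence convex, so every slice $\mathcal{K}_I := \mathcal{K} \cap \mathbf{patt}^{-1}(I) \cap \overline{B_M(0)}$ is convex. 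Because $\mathfrak{P}_f$ is finite, it suffices to show $\mathbb{P}[\widehat{\boldsymbol{u}}_{f,n} \in \mathcal{K}_I] \to \mathbb{P}[\widehat{\boldsymbol{u}}_f \in \mathcal{K}_I]$ pattern by pattern.

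Fix such an $I$. Using $\partial f(y) = \nabla g(y) + \partial f_0(I(y))$ together with the eventual stabilization $I(\beta^0 + u/\sqrt{n}) = I_0 := I_{\beta^0}(I)$ for every $u \in \mathcal{K}_I$ and $n$ large, the strong convexity of the objective $F_n$ turns the optimality condition into the equivalence
\begin{equation*}
\widehat{\boldsymbol{u}}_{f,n} \in \mathcal{K}_I \iff W_n \in \tilde{B}_n := \bigl\{C_n u + \nabla g(\beta^0 + u/\sqrt{n}) : u \in \mathcal{K}_I\bigr\} + \partial f_0(I_0),
\end{equation*}
with the analogous $\widehat{\boldsymbol{u}}_f \in \mathcal{K}_I \iff W \in B := C\mathcal{K}_I + \nabla g(\beta^0) + \partial f_0(I_0)$ (using the generalization $\partial_u f'(\beta^0; u) = \nabla g(\beta^0) + \partial f_0(I_0)$ asserted just before the theorem). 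The limit set $B$ is convex as a Minkowski sum of convex sets. I would then introduce the convex proxy $\bar{B}_n := C_n \mathcal{K}_I + \nabla g(\beta^0) + \partial f_0(I_0)$ and observe $d_H(\tilde{B}_n, \bar{B}_n) \leq LM/\sqrt{n}$ by Lipschitz continuity of $\nabla g$ near $\beta^0$, while $d_H(\bar{B}_n, B) \overset{a.s.}{\longrightarrow} 0$ from $C_n \overset{a.s.}{\longrightarrow} C$. Lemma \ref{Hausdorff lemma} applied to the convex $\bar{B}_n$ then yields $\mathbb{P}[W_n \in \bar{B}_n] \to \mathbb{P}[W \in B]$.

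The principal obstacle is that $\tilde{B}_n$ is not itself convex: the defining map $u \mapsto C_n u + \nabla g(\beta^0 + u/\sqrt{n})$ is non-affine, so $\tilde{B}_n$ is a Minkowski sum with a curved image rather than a polytope, while the inward-erosion sandwich underlying Lemma \ref{Hausdorff lemma} requires convexity of the finite-$n$ set. To bridge $\tilde{B}_n$ and $\bar{B}_n$, I would bound $|\mathbb{P}[W_n \in \tilde{B}_n] - \mathbb{P}[W_n \in \bar{B}_n]|$ by the $W_n$-probability of the tubular set $\bar{B}_n^{LM/\sqrt{n}} \setminus \bar{B}_n^{-LM/\sqrt{n}}$ (for the lower inclusion, exploit the explicit Minkowski-sum structure of $\tilde{B}_n$ by decomposing a point of the inner erosion via the convex $\partial f_0(I_0)$-factor to absorb the $\nabla g$-correction). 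This tubular mass vanishes because $W$ has a continuous bounded density and the boundary $\partial B$ of the convex limit has null Lebesgue measure. Summing over the finitely many patterns and letting $\varepsilon \downarrow 0$ yields the main convex-set convergence; the weak convergence of $I(\widehat{\boldsymbol{u}}_{f,n})$ and of $\mathbf{patt}(\widehat{\boldsymbol{u}}_{f,n})$ then follow by specializing $\mathcal{K}$ to the convex pattern cells of $f$ and of the SLOPE norm, respectively.
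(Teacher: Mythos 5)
Your overall route coincides with the paper's own proof: tightness of $\widehat{\boldsymbol{u}}_{f,n}$, restriction to $\overline{B_M(0)}$ up to an $\varepsilon$-event, decomposition of $\mathcal{K}$ into the finitely many convex pattern slices $\mathcal{K}_I$, translation of $\{\widehat{\boldsymbol{u}}_{f,n}\in\mathcal{K}_I\}$ into an event for $W_n$ via the optimality condition and the stabilization $I(\beta^0+u/\sqrt{n})=I_{\beta^0}(I)$, Hausdorff convergence plus Lemma \ref{Hausdorff lemma}, and finally summation over patterns and $\varepsilon\downarrow 0$. Where you go beyond the paper is your third paragraph: the published proof writes the finite-$n$ event as $W_n\in C_n\mathcal{K}^{\mathfrak{p}_f}+\partial f(\beta^0+\mathcal{K}^{\mathfrak{p}_f}/\sqrt{n})$ and applies Lemma \ref{Hausdorff lemma} directly, passing over the fact that $\nabla g(\beta^0+u/\sqrt{n})$ is coupled to the same $u$ as $C_nu$, so the true event set $\tilde B_n$ is neither that Minkowski sum nor obviously convex. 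Your sandwich of $\tilde B_n$ between a dilation and an erosion of the convex proxy $\bar B_n$ is exactly the right repair (and uniform continuity of $\nabla g$ near $\beta^0$ already suffices: you only need $\delta_n:=\sup_{\|u\|\le M}\|\nabla g(\beta^0+u/\sqrt{n})-\nabla g(\beta^0)\|\rightarrow 0$, not a Lipschitz rate).

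The one genuine gap is the lower inclusion $\bar B_n^{-\delta_n}\subset\tilde B_n$. The parenthetical recipe --- decompose a point of the erosion and absorb the $\nabla g$-correction into the $\partial f_0(I_0)$-factor --- does not work as stated: the correction to be absorbed is $\nabla g(\beta^0+u/\sqrt{n})-\nabla g(\beta^0)$ for the very witness $u$ you are trying to produce (so the argument is circular), and $\partial f_0(I_0)$ may be a single point (e.g.\ when $I_0$ is a singleton), leaving nothing to absorb into. The inclusion is nevertheless true, but it requires a fixed-point (or degree) argument. For instance: for $y\in\bar B_n$ the witness is unique and continuous in $y$, namely $\tilde u(y)$, the argmin of the strongly convex function $u\mapsto \tfrac12 u^TC_nu-\langle y-\nabla g(\beta^0),u\rangle+f_0'(\beta^0;u)$, and $y\in\bar B_n\iff\tilde u(y)\in\mathcal{K}_I$; given $z\in\bar B_n^{-\delta_n}$, the map $u\mapsto\tilde u\bigl(z-[\nabla g(\beta^0+u/\sqrt{n})-\nabla g(\beta^0)]\bigr)$ sends the compact convex set $\overline{\mathcal{K}_I}$ into $\mathcal{K}_I$, and a Brouwer fixed point $u^*$ satisfies $z\in C_nu^*+\nabla g(\beta^0+u^*/\sqrt{n})+\partial f_0(I_0)$, i.e.\ $z\in\tilde B_n$. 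With this step supplied, your argument is complete, and at this point it is in fact more careful than the proof in the paper.
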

\begin{proof}
The sets $I^{-1}(\mathfrak{p}_f)$, where $\mathfrak{p}_f\in\mathfrak{P}_f$, are convex cones, which  partition $\mathbb{R}^p$ into sets of constant pattern. For $\varepsilon>0$ and a convex $\mathcal{K}$, there exists $M>0$ s.t. $\mathbb{P}[\widehat{\boldsymbol{u}}_{f,n}\in\mathcal{K}\setminus B_{M}(0)]<\varepsilon $ $\forall n$. For each $\mathfrak{p}_f\in\mathfrak{P}_f$ consider the bounded convex set $\mathcal{K}^{\mathfrak{p}_f}= \mathcal{K}\cap I^{-1}(\mathfrak{p}_f)\cap B_{M}(0)$. We get
\begin{align}
    \mathbb{P}\left[\widehat{\boldsymbol{u}}_{f,n}\in \mathcal{K}^{\mathfrak{p}_f}\right] = & \mathbb{P}\left[W_n\in C_n \mathcal{K}^{\mathfrak{p}_f}+\partial f\left(\beta^0 +\mathcal{K}^{\mathfrak{p}_f}/\sqrt{n}\right)\right]\\
    \rightarrow 
    & \mathbb{P}\left[W\in C\mathcal{K}^{\mathfrak{p}_f}+\partial f'(\beta^0;\mathfrak{p}_f)\right]=\mathbb{P}\left[\widehat{\boldsymbol{u}}_{f}\in \mathcal{K}^{\mathfrak{p}_f}\right],
\end{align}
where the convergence follows by Lemma \ref{Hausdorff lemma}, since $I(\beta^0 +u/\sqrt{n})$ stabilizes for every $u\in\mathcal{K}^{\mathfrak{p}_f}$ at the limiting pattern $I_{\beta^0}(\mathfrak{p}_f)$ and $\partial f'(\beta^0;\mathfrak{p}_f)=\partial f (I_{\beta^0}(\mathfrak{p}_f))$.
\begin{align*}
    \limsup_{n\rightarrow\infty}\mathbb{P}\left[\widehat{\boldsymbol{u}}_{f,n}\in \mathcal{K}\right]
    &\leq \limsup_{n\rightarrow\infty} \sum_{\mathfrak{p}_f\in\mathfrak{P}_f}\mathbb{P}\left[\widehat{\boldsymbol{u}}_{f,n}\in \mathcal{K}^{\mathfrak{p}_f}\right]+\varepsilon
    = \sum_{\mathfrak{p}_f\in\mathfrak{P}_f}\mathbb{P}\left[\widehat{\boldsymbol{u}}_{f}\in \mathcal{K}^{\mathfrak{p}_f}\right]+\varepsilon
    \leq\mathbb{P}\left[\widehat{\boldsymbol{u}}_{f}\in \mathcal{K}\right]+\varepsilon,
    \\
    \liminf_{n\rightarrow\infty}\mathbb{P}\left[\widehat{\boldsymbol{u}}_{f,n}\in \mathcal{K}\right]
    &\geq \liminf_{n\rightarrow\infty} \sum_{\mathfrak{p}_f\in\mathfrak{P}_f}\mathbb{P}\left[\widehat{\boldsymbol{u}}_{f,n}\in \mathcal{K}^{\mathfrak{p}_f}\right]
    = \sum_{\mathfrak{p}_f\in\mathfrak{P}_f}\mathbb{P}\left[\widehat{\boldsymbol{u}}_{f}\in \mathcal{K}^{\mathfrak{p}_f}\right]
    \geq\mathbb{P}\left[\widehat{\boldsymbol{u}}_{f}\in \mathcal{K}\right],
\end{align*}
which shows that $\widehat{\boldsymbol{u}}_{f,n}$ converges to $\widehat{\boldsymbol{u}}_{f}$ on all convex sets. The remaining assertions of the Theorem on weak pattern convergence follow from setting $\mathcal{K}=I^{-1}(\mathfrak{p}_f)$ for some $\mathfrak{p}_f\in\mathfrak{P}_f$ or $\mathcal{K}=\mathbf{patt}^{-1}(\mathfrak{p})$ for $\mathfrak{p}\in\mathfrak{P}$, respectively.
\end{proof}

In this framework, 
the results on weak SLOPE pattern convergence in Section 4. and 5. extend to a broader family of regularizers $f$. We decide not to pursue this line of inquiry in this paper, and formulate and prove the subsequent results in the context of SLOPE.

\section{General Loss function}
Recall the assumption of the linear model $y = X\beta^0 + \varepsilon$, where $X=(X_1,..,X_n)^{T}$,  $X_i\sim_{i.i.d}\mathbb{P}$, with covariance $C$, $\varepsilon_1,..,\varepsilon_n$ i.i.d. and $X\independent\varepsilon$. We can rewrite (\ref{SLOPE-estimator_n}) as
\begin{equation}
     \boldsymbol{\hat{\beta}}_n = \underset{b\in\mathbb{R}^p}{\operatorname{argmin}}  \dfrac{1}{n}\sum_{i=1}^{n} g(y_i, X_i, \beta) + J_{\boldsymbol{\lambda}^n/n}(\beta),
\end{equation}
where we have sofar assumed $g(y_i, X_i, \beta)=( y_i- X_i^T\beta)^2/2=(\varepsilon_i - X^T_i(\beta-\beta^0))^2/2$. In this section, we show how the arguments from previous sections can be extended to a broader class of loss functions, including the Quantile and the Huber loss:
\begin{equation*} g(y_i, X_i, \beta) =
    \begin{cases}
      ( y_i- X_i^T\beta)^2/2 &  \text{Quadratic loss, } \\
      H(y_i- X_i^T\beta) & \text{Huber loss,} \\
      \vert y_i- X_i^T\beta\vert_{\alpha} & \text{Quantile loss,}
    \end{cases}
\end{equation*}
where $\vert \cdot \vert_{\alpha} :\mathbb{R}\rightarrow\mathbb{R}$, given by
\begin{align*}
    \vert x \vert_{\alpha}&:=\begin{cases}
      (1-\alpha)(-x) &  x\leq0, \\
      \alpha x &  x>0,
    \end{cases}
\end{align*}
and $\alpha\in (0,1).$ Note that $\vert x \vert_{1/2}=\frac{1}{2}\vert x\vert$. For the Quadratic loss and the Huber loss, we shall assume that $\varepsilon_i$ is centered with variance $\sigma^2$. For the Quantile loss, we specify the assumptions in the example section.
We apply the results from Pollard's paper \cite{pollard_1985}, harnessing the notion of stochastic differentiability. We consider 
\begin{align}\label{G_n}
    G_n(\beta)&:=\dfrac{1}{n}\sum_{i=1}^n g(y_i, X_i, \beta),\\
    G(\beta)&:=\mathbb{E}[g(\cdot,\beta)],\nonumber\\
    \nu_n g(\cdot,\beta)&:= n^{-1/2}\sum_{i=1}^n (g(\cdot,\beta)-G(\beta)).\nonumber
\end{align}
Observe that $G(\beta)=G_n(\beta)+n^{-1/2}\nu_n g(\cdot,\beta)$.
Throughout, we shall express $g(\cdot, \beta)$ in the form
\begin{equation}\label{key expansion of g}
    g(\cdot,\beta)=g(\cdot,\beta^0)+(\beta-\beta^0)^T\triangle(\cdot, \beta^0)+ \Vert \beta-\beta^0\Vert r(\cdot, \beta),
\end{equation}
where $\triangle(\cdot, \beta^0)$ is a random vector, which does not depend on $\beta$, and $r(\cdot, \beta)$ a rest term. Pollard \cite{pollard_1985} shows how the rather weak assumptions of twice differentiability of $G(\beta)$ together with a stochastic differentiability condition on $g(\cdot,\beta)$ are sufficient for asymptotic normality of the minimizer in (\ref{G_n}), centered around $\beta^0$. 

These conditions lend themselves well to describe the asymptotics of the SLOPE estimator $\boldsymbol{\hat{\beta}}_n$, which minimizes $G_n(\beta)+J_{\boldsymbol{\lambda}^n/n}(\beta)$. We refer the reader to \cite{pollard_1985} for a more thourough discussion, but include the definition of stochastic differentiability.
\begin{definition}
A function $g(\cdot,\beta)$ of the form (\ref{key expansion of g}) is called stochastically differentiable at $\beta^0$ if for any shrinking sequence of neighborhoods $U_n$ around $\beta^0$: 
\begin{equation*}
    \underset{\beta\in U_n}{\operatorname{sup}}\hspace{0,1cm} \dfrac{\vert\nu_n r(\cdot,\beta)\vert}{1+n^{1/2}\Vert\beta-\beta^0\Vert}\xrightarrow[n\rightarrow\infty]{p} 0.
\end{equation*}
\end{definition}
\begin{remark}
     Stochastic differentiability is a relatively weak condition on $g(\cdot,\beta)$. If $g(\cdot, \beta)$ is differentiable at $\beta^0$, and  $\triangle(\cdot,\beta^0)=\nabla_{\beta}g(\cdot,\beta^0)$ has a finite second moment $\mathbb{E}[\triangle(\cdot,\beta^0)\triangle(\cdot,\beta^0)^T]<\infty$, then $G(\beta)$ has a non-singular Hessian at $\beta^0$ and $g(\cdot,\beta)$ is stochastically differentiable at $\beta^0$ (Section 4. in \cite{pollard_1985}). However, differentiability of $g(\cdot,\beta)$ is not necessary for stochastic differentiability. More general ways of verifying stochastic differentiability, including the Huber loss or Median loss, are discussed in Section 5. \cite{pollard_1985}.
\end{remark}
The following Lemma is a direct corollary the proof of Theorem 2 in \cite{pollard_1985}. 
\begin{lemma}\label{Jonas Authority lemma} Assume $G(\beta)$ has a non-singular Hessian $\tilde{C}=\nabla^2\vert_{\beta=\beta^0}G(\beta)$ at it's minimizing value $\beta^0$ and $g(\cdot,\beta)$ is stochastically differentiable.  Then
\begin{equation*}
    n\left(G_n\left(\beta^0+\dfrac{u}{\sqrt{n}}\right)-G_n(\beta^0)\right)=\dfrac{1}{2}u^T\tilde{C}u+u^T\left[\nu_n\triangle(\cdot, \beta^0)+R_n(u)\right],
\end{equation*}
where $sup_{u\in K}\Vert R_n(u)\Vert\rightarrow 0$ in probability as $n\rightarrow\infty$, for any compact set $K$.
\end{lemma}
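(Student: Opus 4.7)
The plan is to substitute the decomposition $G_n = G + n^{-1/2}\nu_n g(\cdot, \beta)$ into the left-hand side, Taylor expand the deterministic piece using the non-singular Hessian at the minimizer, expand the stochastic piece using the representation (\ref{key expansion of g}), and finally control the error through stochastic differentiability. Explicitly, write
\[
n\bigl(G_n(\beta^0 + u/\sqrt{n}) - G_n(\beta^0)\bigr) = n\bigl[G(\beta^0 + u/\sqrt{n}) - G(\beta^0)\bigr] + \sqrt{n}\,\nu_n\bigl[g(\cdot, \beta^0 + u/\sqrt{n}) - g(\cdot, \beta^0)\bigr].
\]
Because $\beta^0$ minimizes $G$ (so $\nabla G(\beta^0)=0$) and $G$ has non-singular Hessian $\tilde{C}$ there, Taylor's theorem with its uniform remainder, valid because $u/\sqrt{n}$ lies in a shrinking neighborhood of $0$ when $u$ ranges over a compact $K$, gives
\[
n\bigl[G(\beta^0 + u/\sqrt{n}) - G(\beta^0)\bigr] = \tfrac{1}{2}u^T \tilde{C} u + \|u\|^2 \varepsilon_n(u),
\]
with $\sup_{u \in K} |\varepsilon_n(u)| \to 0$.

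For the stochastic piece, insert (\ref{key expansion of g}) with $\beta = \beta^0 + u/\sqrt{n}$ and exploit linearity of $\nu_n$ in its functional argument (here $\triangle(\cdot, \beta^0)$ carries no $\beta$-dependence, and the scalars $u/\sqrt{n}$, $\|u\|/\sqrt{n}$ pull outside $\nu_n$) to obtain
\[
\sqrt{n}\,\nu_n\bigl[g(\cdot, \beta^0 + u/\sqrt{n}) - g(\cdot, \beta^0)\bigr] = u^T \nu_n \triangle(\cdot, \beta^0) + \|u\|\, \nu_n r(\cdot, \beta^0 + u/\sqrt{n}).
\]
Applying stochastic differentiability to the shrinking neighborhood $U_n = \{\beta : \sqrt{n}\|\beta - \beta^0\| \leq \sup_{u \in K} \|u\|\}$ yields
\[
\sup_{u \in K} \frac{|\nu_n r(\cdot, \beta^0 + u/\sqrt{n})|}{1 + \|u\|} \xrightarrow{p} 0,
\]
so that $\|u\|\,|\nu_n r(\cdot, \beta^0 + u/\sqrt{n})| \leq \|u\|(1+\|u\|)\eta_n$ with $\eta_n = o_P(1)$, uniformly for $u \in K$.

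Collecting both error contributions into
\[
E_n(u) := \|u\|^2 \varepsilon_n(u) + \|u\|\, \nu_n r(\cdot, \beta^0 + u/\sqrt{n}),
\]
the bounds above yield $|E_n(u)| \leq \|u\|\,\delta_n$ for some $\delta_n = o_P(1)$ uniform over $u \in K$. Setting $R_n(u) := (E_n(u)/\|u\|^2)\,u$ for $u \neq 0$ and $R_n(0) := 0$, we have $u^T R_n(u) = E_n(u)$ and $\sup_{u \in K} \|R_n(u)\| \leq \delta_n \xrightarrow{p} 0$, which is the claimed identity. The main subtlety, and the step that takes a little more than a direct appeal to Pollard's scalar remainder form, is packaging the error as $u^T R_n(u)$ with $\|R_n(u)\|$ uniformly small; this requires the sharper uniform bound $E_n(u) = o_P(\|u\|)$, achieved by combining the $o(\|u\|^2)$ Taylor estimate with the $(1+\|u\|)$-weighted stochastic differentiability bound.
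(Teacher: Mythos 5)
Your proposal is correct and follows essentially the same route as the paper's proof: the same decomposition $G_n = G + n^{-1/2}\nu_n g$, the same second-order Taylor expansion of $G$ at its minimizer, the same use of the expansion (\ref{key expansion of g}) for the empirical-process part, and stochastic differentiability applied along the shrinking neighborhoods $\beta^0 + K/\sqrt{n}$ to kill the remainder uniformly on compacts. If anything, your explicit packaging $R_n(u) = (E_n(u)/\Vert u\Vert^2)\,u$ with the bound $\vert E_n(u)\vert \le \Vert u\Vert\,\delta_n$ is a slightly cleaner bookkeeping of the rest term than the paper's own formula, but the argument is the same.
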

\begin{proof} Abbreviate $\beta_n=\beta^0+u/\sqrt{n}$. We have 
\begin{equation*}
    G_n\left(\beta_n\right)-G_n(\beta^0)= G\left(\beta_n\right)-G(\beta^0)+n^{-1/2}\nu_n\left[g\left(\cdot,\beta_n\right)-g(\cdot,\beta^0)\right].
\end{equation*}
Taylor expansion of $G(\beta)$ around $\beta^0$ yields:
 \begin{align*}
     n(G(\beta_n)-G(\beta^0))&=n[(\beta_n-\beta^0)^T \tilde{C}(\beta_n-\beta^0)/2+ o(\Vert\beta_n-\beta^0\Vert^2)]\\
     &=u^T\tilde{C}u/2 + n\cdot o(\Vert u\Vert^2/n).
 \end{align*}
 At the same time
 \begin{align*}
     n\cdot n^{-1/2}\nu_n[g(\cdot,\beta_n)-g(\cdot,\beta^0)]&=n^{1/2}(\beta_n-\beta^0)^T \nu_n\triangle(\cdot,\beta^0)+n^{1/2}\Vert(\beta_n-\beta^0)\Vert \nu_n r(\cdot,\beta_n)\\
     &=u^T \nu_n\triangle(\cdot,\beta^0)+\Vert u\Vert \nu_n r(\cdot,\beta^0+ u/\sqrt{n}).
 \end{align*}
 Combining both expressions yields the desired representation with a rest term 
 \begin{equation*}
R_n(u)= u\cdot \nu_n r(\cdot,\beta^0+ u/\sqrt{n}) + u\cdot \dfrac{o(\Vert u\Vert^2/n)}{(\Vert u\Vert^2/n)}.
 \end{equation*}
 Stochastic differentiability guarantees that for any compact set $K$, $sup_{u\in K}\Vert R_n(u)\Vert\rightarrow 0$ in probability.
\end{proof}

The following statement unifies Theorem 2 in \cite{pollard_1985} and Theorem 2 in \cite{fu2000asymptotics} into one result.
\begin{theorem}\label{Pollards asymptotics with penalty}
Let $g(\cdot,\beta)$ be a convex loss function and $\boldsymbol{\lambda}^n/\sqrt{n}\rightarrow \boldsymbol{\lambda}\geq0$. Suppose that
\begin{enumerate}
    \item $G(\beta)$ has a non-singular Hessian $\tilde{C}=\nabla^2\vert_{\beta=\beta^0}G(\beta)$ at its minimizing value $\beta^0$,
    \item $\triangle(\cdot,\beta^0)$ in (\ref{key expansion of g}) satisfies $\mathbb{E}(\triangle(\cdot,\beta^0))=0$ and $C_{\triangle}:=\mathbb{E}(\triangle(\cdot,\beta^0)\triangle(\cdot,\beta^0)^T)<\infty$,
    \item $g(\cdot, \beta)$ is stochastically differentiable at $\beta^0$.
\end{enumerate}

Then $\boldsymbol{\hat{u}}_n=\sqrt{n}(\boldsymbol{\hat{\beta}}_n-\beta^0)$ converges in distribution to the random vector $\boldsymbol{\hat{u}}:=\textup{argmin}_{u} V(u)$, where
\begin{equation}
    V(u) = \dfrac{1}{2}u^{T}\tilde{C}u-u^{T}\tilde{W}+J'_{\boldsymbol{\lambda}}({\beta^0};u),\label{V(u) general}
\end{equation}
and $\tilde{W}\sim\mathcal{N}(0,C_{\triangle})$. Moreover, $\mathbf{patt}(\boldsymbol{\hat{u}}_n)$ converges in distribution to $\mathbf{patt}(\boldsymbol{\hat{u}})$, hence $\mathbf{patt}(\boldsymbol{\hat{\beta}}_n)$ converges in distribution to $\mathbf{patt}_{\beta^0}(\boldsymbol{\hat{u}})$ and $FDR(\boldsymbol{\hat{\beta}}_n)\rightarrow FDR(\mathbf{patt}_{\beta^0}(\boldsymbol{\hat{u}}))$.
\end{theorem}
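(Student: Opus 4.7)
The plan is to combine the quadratic expansion from Lemma~\ref{Jonas Authority lemma} with the asymptotic penalty calculation inside the proof of Theorem~\ref{sqrt-asymptotic}, and then lift the resulting weak convergence to weak pattern convergence by mimicking the Hausdorff-distance argument of Theorem~\ref{theorem pattern}. Setting
\[
V_n(u):=n\bigl[G_n(\beta^0+u/\sqrt n)-G_n(\beta^0)\bigr]+J_{\boldsymbol{\lambda}^n}(\beta^0+u/\sqrt n)-J_{\boldsymbol{\lambda}^n}(\beta^0),
\]
so that $\boldsymbol{\hat{u}}_n=\operatorname{argmin}V_n$, Lemma~\ref{Jonas Authority lemma} rewrites
\[
V_n(u)=\tfrac12 u^{T}\tilde C u + u^{T}\nu_n\triangle(\cdot,\beta^0)+u^{T}R_n(u)+J_{\boldsymbol{\lambda}^n}(\beta^0+u/\sqrt n)-J_{\boldsymbol{\lambda}^n}(\beta^0),
\]
with $\sup_{u\in K}\|R_n(u)\|\xrightarrow{p}0$ on every compact $K$. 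The multivariate CLT applied under assumption~2 gives $\nu_n\triangle(\cdot,\beta^0)\xrightarrow{d}\tilde W\sim\mathcal{N}(0,C_{\triangle})$, and the computation leading to~(\ref{J_lambda convergence}) shows the penalty difference converges pointwise to $J'_{\boldsymbol{\lambda}}(\beta^0;u)$; using $\tilde W\stackrel{d}{=}-\tilde W$ one then obtains $V_n(u)\xrightarrow{d}V(u)$ pointwise. Convexity of each $V_n$ together with uniqueness of $\operatorname{argmin}V$ (guaranteed by assumption~1) promotes this to convergence of argmins via the convex-analytic lemmas already invoked in Theorems~\ref{consistency theorem} and~\ref{sqrt-asymptotic}, yielding $\boldsymbol{\hat{u}}_n\xrightarrow{d}\boldsymbol{\hat{u}}$.

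For weak pattern convergence, I would work with the linearized objective
\[
\tilde V_n(u):=\tfrac12 u^{T}\tilde C u+u^{T}\nu_n\triangle(\cdot,\beta^0)+J_{\boldsymbol{\lambda}^n}(\beta^0+u/\sqrt n)-J_{\boldsymbol{\lambda}^n}(\beta^0),
\]
which has exactly the quadratic-plus-SLOPE form treated in Section~4, with the deterministic $\tilde C$ in place of $C_n$ and the score $-\nu_n\triangle(\cdot,\beta^0)\xrightarrow{d}\tilde W$ in place of $W_n$. Theorem~\ref{theorem pattern} then applies verbatim to $\tilde{\boldsymbol{u}}_n:=\operatorname{argmin}\tilde V_n$ and yields $\mathbf{patt}(\tilde{\boldsymbol{u}}_n)\xrightarrow{d}\mathbf{patt}(\boldsymbol{\hat{u}})$. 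To transfer this to $\boldsymbol{\hat{u}}_n$, fix a pattern $\mathfrak{p}$ and a bounded set $D^{\mathfrak{p}}\subset\mathbf{patt}^{-1}(\mathfrak{p})$ of the form~(\ref{D set pattern}); the optimality condition of $V_n$ restricted to $D^{\mathfrak{p}}$, combined with Proposition~\ref{key subdifferential proposition}, expresses the event $\{\boldsymbol{\hat{u}}_n\in D^{\mathfrak{p}}\}$ in the form $\{-\nu_n\triangle(\cdot,\beta^0)\in \tilde C D^{\mathfrak{p}}+\partial J_{\boldsymbol{\lambda}^n/\sqrt n}(\mathfrak{p}_0)+\rho_n\}$, where $\rho_n$ is an $o_p(1)$ perturbation arising from $R_n$ on the compact $D^{\mathfrak{p}}$. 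A strengthening of Lemma~\ref{Hausdorff lemma} that absorbs such a vanishing Minkowski perturbation of the target set, together with the Hausdorff convergence of the subdifferentials from Proposition~\ref{key subdifferential proposition}, then gives $\mathbb{P}[\boldsymbol{\hat{u}}_n\in D^{\mathfrak{p}}]\to\mathbb{P}[\boldsymbol{\hat{u}}\in D^{\mathfrak{p}}]$; tightness of $\boldsymbol{\hat{u}}_n$ controls the contributions from outside a large ball exactly as in the proof of Theorem~\ref{theorem pattern}.

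The remaining assertions are then direct corollaries. The identity $\boldsymbol{\hat{\beta}}_n=\beta^0+\boldsymbol{\hat{u}}_n/\sqrt n$ together with the definition of the limiting pattern yields $\mathbf{patt}(\boldsymbol{\hat{\beta}}_n)\xrightarrow{d}\mathbf{patt}_{\beta^0}(\boldsymbol{\hat{u}})$ exactly as in Corollary~\ref{asymptotic sampling corollary}, and FDR convergence follows by bounded convergence as in Example~\ref{example FDR control}.

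The main technical obstacle is the transfer step from $\tilde{\boldsymbol{u}}_n$ to $\boldsymbol{\hat{u}}_n$: stochastic differentiability delivers uniform smallness of $R_n$ and hence argmin-closeness of $\boldsymbol{\hat{u}}_n$ to $\tilde{\boldsymbol{u}}_n$, but because $\mathbf{patt}$ is discontinuous this closeness cannot be converted into a pattern-level agreement pointwise. Instead the $o_p(1)$ slack must be absorbed inside the Hausdorff argument via a Minkowski-perturbed version of Lemma~\ref{Hausdorff lemma}, which is the only genuinely new ingredient compared with the proofs of Theorems~\ref{sqrt-asymptotic} and~\ref{theorem pattern}.
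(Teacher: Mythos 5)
Your proposal is correct and takes essentially the same route as the paper: the same expansion via Lemma~\ref{Jonas Authority lemma} plus convexity for weak convergence of $\boldsymbol{\hat{u}}_n$, and the same absorption of the uniformly small rest term $R_n$ into a Minkowski enlargement/shrinkage of the optimality set $B_n=\tilde{C}D^{\mathfrak{p}}+\partial J_{\boldsymbol{\lambda}^n/\sqrt{n}}(\mathfrak{p}_0)$, handled through Hausdorff convergence, Lemma~\ref{Hausdorff lemma} and (\ref{B^delta approx}). The ``strengthened Hausdorff lemma'' you defer is precisely what the paper carries out inline via the two-sided sandwich $\mathbb{P}[W_n\in B_n^{-\delta_M}]-2\varepsilon\leq p_n\leq \mathbb{P}[W_n\in B_n^{\delta_M}]+2\varepsilon$, and your detour through the linearized minimizer $\tilde{\boldsymbol{u}}_n$ is unnecessary but harmless.
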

\begin{remark}
In the case $\boldsymbol{\lambda}=0$, the minimum is attained at $u=\tilde{C}^{-1}\tilde{W}\sim\mathcal{N}(0, \tilde{C}^{-1}C_{\triangle}\tilde{C}^{-1})$, which is exactly the assertion of Theorem 2 in \cite{pollard_1985}. 
\end{remark}

\begin{proof}
By Lemma \ref{Jonas Authority lemma}:  
\begin{align}
    \sqrt{n}(\boldsymbol{\hat{\beta}}_n-\beta^0)
    &=
    \underset{u}{\operatorname{argmin}}\hspace{0,1cm} G_n\left(\beta^0+\dfrac{u}{\sqrt{n}}\right)+J_{\boldsymbol{\lambda}^n/n}\left(\beta^0+\dfrac{u}{\sqrt{n}}\right)\nonumber\\
    &=
   \underset{u}{\operatorname{argmin}}\hspace{0,1cm}  n\left(G_n\left(\beta^0+\dfrac{u}{\sqrt{n}}\right)-G_{n}(\beta^0)\right) + J_{\boldsymbol{\lambda}^n}\left(\beta^0+\dfrac{u}{\sqrt{n}}\right)-J_{\boldsymbol{\lambda}^n}\left(\beta^0\right)\nonumber\\
    &=\underset{u}{\operatorname{argmin}}\hspace{0,1cm} \dfrac{1}{2}u^T\tilde{C}u+u^T[\nu_n\triangle(\cdot,\beta^0)+R_n(u)]+J_{\boldsymbol{\lambda}^n}\left(\beta^0+\dfrac{u}{\sqrt{n}}\right)-J_{\boldsymbol{\lambda}^n}\left(\beta^0\right),\label{ultimate minimization problem}
\end{align}
For every fixed $u$, the convex objective in (\ref{ultimate minimization problem}) converges in distribution and it follows that
\begin{align*}
    \sqrt{n}(\boldsymbol{\hat{\beta}}_n-\beta^0)\overset{d}{\longrightarrow}\underset{u}{\operatorname{argmin}} \left\{\hspace{0,1cm}  \dfrac{1}{2}u^T\tilde{C}u+u^T\tilde{W}+J'_{\boldsymbol{\lambda}}(\beta^0;u)\right\}=\boldsymbol{\hat{u}}.
\end{align*}
This shows weak convergence, which is not sufficient to conclude pattern convergence. The key to pattern convergence is to absorb the rest term $R_n(u)$ into the $O_p(1)$ stochastic term $W_n:=\nu_n\triangle(\cdot,\beta^0)$. We abbreviate the objective in (\ref{ultimate minimization problem}) by $F_n(W_n+R_n(u), u)$. The goal is to get rid of the dependency on $u$ in the first argument of $F_n$. To that end, let us fix $\varepsilon>0$. By uniform tightness of $\boldsymbol{\hat{u}}_n$, there exists a compact set $K$ such that $\mathbb{P}[\boldsymbol{\hat{u}}_n\in K^{\circ}]>1-\varepsilon$ for all $n\in\mathbb{N}$, where $K^{\circ}$ denotes the interior of $K$. Moreover, by Lemma \ref{Jonas Authority lemma}, there exists a sequence $\delta_M\downarrow 0$ as $M\rightarrow \infty$, such that $\mathbb{P}[sup_{u\in K}\Vert R_n(u)\Vert<\delta_M]>1-\varepsilon$ for every $n\geq M$. Let $\mathfrak{p}\in\mathfrak{P}$ be a fixed pattern and consider $p_n:=\mathbb{P}[\mathbf{patt}(\boldsymbol{\hat{u}}_n)=\mathfrak{p}]$.
\begin{align}
    p_n &= \mathbb{P}\left[\mathbf{patt}(\underset{u\in\mathbb{R}^p}{\operatorname{argmin}}\hspace{0,1cm} F_n(W_n+R_n(u), u))=\mathfrak{p}\right]\nonumber\\
    &\leq\mathbb{P}\left[\mathbf{patt}(\underset{u\in K^{\circ}}{\operatorname{argmin}}\hspace{0,1cm} F_n(W_n+R_n(u), u))=\mathfrak{p},\boldsymbol{\hat{u}}_n\in K^{\circ}, \underset{u\in K}{\operatorname{sup}}\Vert R_n(u)\Vert<\delta_M\right]+2\varepsilon\nonumber\\
    &\leq\mathbb{P}\left[\exists\hspace{0,1cm}\delta\in B_{\delta_M}(0):\mathbf{patt}(\underset{u\in K^{\circ}}{\operatorname{argmin}}\hspace{0,1cm} F_n(W_n+\delta, u))=\mathfrak{p},\boldsymbol{\hat{u}}_n\in K^{\circ}\right]+2\varepsilon\nonumber\\
    &\leq\mathbb{P}\left[\exists\hspace{0,1cm}\delta\in B_{\delta_M}(0): W_n +\delta \in \tilde{C}D^{\mathfrak{p}}+\partial J_{\boldsymbol{\lambda}^n/\sqrt{n}}(\mathfrak{p}_0)\right]+2\varepsilon\label{transition into optimality}\\
    &\leq\mathbb{P}\left[ W_n\in \left(\tilde{C}D^{\mathfrak{p}}+\partial J_{\boldsymbol{\lambda}^n/\sqrt{n}}(\mathfrak{p}_0)\right)^{\delta_M}\right]+2\varepsilon\nonumber,
\end{align}
where in (\ref{transition into optimality}) we have used optimality condition $0\in\partial F_n(W_n+\delta, u)$ and denoted by $D^{\mathfrak{p}}$ the intersection of $\mathbf{patt}^{-1}(\mathfrak{p})$ with $K$ and $\mathfrak{p}_0=\mathbf{patt}_{\beta^0}(\mathfrak{p})$. Setting $B_n:=\tilde{C}D^{\mathfrak{p}}+\partial J_{\boldsymbol{\lambda}^n/\sqrt{n}}(\mathfrak{p}_0)$, we have $\tilde{C}D^{\mathfrak{p}}+\partial J_{\boldsymbol{\lambda}/\sqrt{n}}(\mathfrak{p}_0)\overset{d_H}{\longrightarrow} \tilde{C}D^{\mathfrak{p}}+\partial J_{\boldsymbol{\lambda}}(\mathfrak{p}_0)=:B$ converging in the Hausdorff distance. Consequently, also $B_n^{\delta_M}\overset{d_H}{\longrightarrow}B^{\delta_M}$ and applying Lemma \ref{Hausdorff lemma}:
\begin{equation*}
    \limsup_{n\rightarrow\infty} p_n \leq \limsup_{n\rightarrow\infty}\mathbb{P}[W_n\in B_n^{\delta_M}]+2\varepsilon = \mathbb{P}[
    \tilde{W}\in B^{\delta_M}]+2\varepsilon \leq \mathbb{P}[\tilde{W}\in B] +3\varepsilon,
\end{equation*}
where in the last inequality we have taken $\delta_M$ small enough and used (\ref{B^delta approx}). Similarly, we can bound $p_n$ from below 
\begin{align}
    p_n &\geq\mathbb{P}\left[\mathbf{patt}(\underset{u\in K^{\circ}}{\operatorname{argmin}}\hspace{0,1cm} F_n(W_n+R_n(u), u))=\mathfrak{p},\boldsymbol{\hat{u}}_n\in K^{\circ}, \underset{u\in K}{\operatorname{sup}}\Vert R_n(u)\Vert<\delta_M\right]\nonumber\\
    &\geq\mathbb{P}\left[\forall\hspace{0,1cm}\delta\in B_{\delta_M}(0):\mathbf{patt}(\underset{u\in K^{\circ}}{\operatorname{argmin}}\hspace{0,1cm} F_n(W_n+\delta, u))=\mathfrak{p},\boldsymbol{\hat{u}}_n\in K^{\circ}, \underset{u\in K}{\operatorname{sup}}\Vert R_n(u)\Vert<\delta_M\right]\nonumber\\
    &\geq\mathbb{P}\left[\forall\hspace{0,1cm}\delta\in B_{\delta_M}(0): W_n +\delta \in \tilde{C}D^{\mathfrak{p}}+\partial J_{\boldsymbol{\lambda}^n/\sqrt{n}}(\mathfrak{p}_0)\right]-2\varepsilon\nonumber\\
    &=\mathbb{P}\left[ W_n\in \left(\tilde{C}D^{\mathfrak{p}}+\partial J_{\boldsymbol{\lambda}^n/\sqrt{n}}(\mathfrak{p}_0)\right)^{-\delta_M}\right]-2\varepsilon\nonumber,
\end{align}
Since $B_n^{-\delta_M}\overset{d_H}{\longrightarrow}B^{-\delta_M}$, by the same argument applying Lemma \ref{Hausdorff lemma} and (\ref{B^delta approx}):
\begin{equation*}
    \liminf_{n\rightarrow\infty} p_n \geq \liminf_{n\rightarrow\infty}\mathbb{P}[W_n\in B_n^{-\delta_M}]-2\varepsilon = \mathbb{P}[
    \tilde{W}\in B^{-\delta_M}]-2\varepsilon \geq \mathbb{P}[\tilde{W}\in B] -3\varepsilon,
\end{equation*}
As a result $\mathbf{patt}(\boldsymbol{\hat{u}}_n)$ converges in distribution to $\mathbf{patt}(\boldsymbol{\hat{u}})$. The remaining assertions are direct consequences of the weak pattern convergence of $\boldsymbol{\hat{u}}_n$.
\end{proof}

\begin{example}(Quadratic loss)
 Set $g(y_i, X_i, \beta)=(\varepsilon_i - t(\beta))^2/2$, with $t(\beta)=X^T_i(\beta-\beta^0)$. 
 We have $\nabla_{\beta}\hspace{0,1cm}g(y_i,X_i,\beta) = X_i(t(\beta)-\varepsilon_i) $, hence the first order term in (\ref{key expansion of g}) reads $\triangle(\cdot,\beta^0)=-\varepsilon_i X_i$ and by independence $C_\triangle = \mathbb{E}(\triangle(\cdot,\beta^0)\triangle(\cdot,\beta^0)^T) = \mathbb{E}[\varepsilon_i^2]\mathbb{E}[X_i X_i^T]=\sigma^2C$. Moreover, $\nabla^2_{\beta}\hspace{0,1cm}g(y_i,X_i,\beta)= X_i X_i^T$, which gives $\tilde{C}=\nabla^2\vert_{\beta=\beta^0}G(\beta)=\mathbb{E}[\nabla^2\vert_{\beta=\beta^0}\hspace{0,1cm}g(y_i,X_i,\beta)] = C$. This coincides with Theorem \ref{sqrt-asymptotic}.
\end{example}

\begin{example}(Huber loss \cite{hub67beh}) Consider $
    g(y_i, X_i, \beta)= H(\varepsilon_i - t(\beta))$, with $t(\beta) := X^T_i(\beta-\beta^0)$ and
the Huber loss $H:\mathbb{R}\rightarrow\mathbb{R}$, given by
\begin{align*}
    H(x)&:=k \left(-x-\dfrac{k}{2}\right)\mathbf{1}_{\{x<k\}}+\dfrac{1}{2}x^2\mathbf{1}_{\{\vert x \vert<k\}}+k\left(x-\dfrac{k}{2}\right)\mathbf{1}_{\{x>k\}}.
\end{align*}

We consider
\begin{align*}
    D(x,t):=\dfrac{\partial}{\partial t}H(x-t)&= k\mathbf{1}_{\{x-t<-k\}}-(x-t)\mathbf{1}_{\{\vert x-t\vert \leq k\}}-k\mathbf{1}_{\{x-t>k\}}\\
    \dfrac{\partial^2}{\partial^2 t} H(x-t)&= \mathbf{1}_{\{\vert x-t\vert \leq k\}}
\end{align*}
By chain rule $\nabla_{\beta}\hspace{0,1cm}g(y_i,X_i,\beta)=X_i D(\varepsilon_i,t(\beta))$, hence the $\triangle(\cdot,\beta^0)$ term in expansion (\ref{key expansion of g}) reads $\triangle(\varepsilon_i,X_i,\beta^0)=X_i D(\varepsilon_i,0)$. The covariance matrix of $\triangle(\cdot,\beta^0)$ equals $C_{\triangle}=\delta C$, where $C$ is the covariance matrix of $X_i$ and
\begin{equation}
    \delta=\mathbb{E}[D(\varepsilon_i,0)^2]=\mathbb{E}[\varepsilon_i^2\mathbf{1}_{\vert\varepsilon_i\vert<k}]+k^2\mathbb{P}[\vert\varepsilon_i\vert>k].
\end{equation}
Moreover, $\nabla^2_{\beta}\hspace{0,1cm}g(y_i,X_i,\beta)=X_iX_i^T\mathbf{1}_{\{\vert \varepsilon_i-t(\beta)\vert \leq k\}}$, and we get $\tilde{C}=\nabla^2\vert_{\beta=\beta^0}G(\beta)=\gamma C$, where $\gamma=\mathbb{P}[\vert\varepsilon_i\vert<k]$. In particular, letting $k\rightarrow\infty$, we get $\delta\rightarrow\mathbb{E}[\varepsilon_i^2]=\sigma^2$ and $\gamma\rightarrow 1$, and recover the standard quadratic case with $\tilde{C}=C$ and $C_{\triangle}=\sigma^2 C$. 
 Further, for $C=\mathbb{I}_p$ we have $\tilde{C}=\gamma\mathbb{I}_p, \tilde{W}\sim\mathcal{N}(0,\delta\mathbb{I}_p)$ and for  $\lambda_i:=\sqrt{\delta}\lambda_{i}^{BH(q)}=\sqrt{\delta}\Phi^{-1}(1-iq/2p)$, FDR($\hat{\boldsymbol{\beta}}_n$) is asymptotically controlled at level $q(\vert I_0 \vert/p)$. (This can be verified by substituting $\tilde{u}=\sqrt{\gamma}u$ in (\ref{V(u) general}) and using Example \ref{example FDR control}; see Appendix \ref{appendix FDR}) Note that $\lambda_i$ do not depend on $\gamma$, but only on $\delta$. Because $\delta=\delta_k\leq\sigma^2$ for any $k$, it suffices to penalize with smaller penalties to obtain FDR control for the Huber loss than for the standard Quadratic loss. In particular, $\delta\rightarrow 0$ as $k\rightarrow 0$, so that $\lambda_i$ are significantly smaller for small $k$. Similar to Example \ref{example FDR control}, the FDR control holds more generally for a block diagonal $C=\mathbb{I}_{I_0}\oplus \Sigma$.
\end{example}
 Quantile regression \cite{koenker1978regression} is a popular method where one assumes that there is a linear relation between the fixed effect and a fixed  quantile of $y$. It has been studied earlier with $L_1$ regularization in \cite{l1normquantile}.
\begin{example} (Quantile) 
For the Quantile regression we drop the assumption of finite variance and zero mean of the noise $\varepsilon$. Instead
we assume that the CDF of $\varepsilon$ has a continuous density $f$ and CDF $F$, which satisfies $F^{-1}(\alpha)=0$, and $f(0)>0$.  Consider $
    g(y_i, X_i, \beta)= \vert\varepsilon_i - t(\beta)\vert_{\alpha}$, with $t(\beta) := X^T_i(\beta-\beta^0)$. The loss function $\vert \cdot \vert_{\alpha}$ is everywhere differentiable except at $0$. The weak derivative is given by $\vert x \vert_{\alpha}^{'}= - (1-\alpha)\mathbf{1}_{\{x \leq 0\}}+\alpha \mathbf{1}_{\{x > 0\}}$. Note that $t\mapsto\mathbb{E}[\vert\varepsilon_i-t\vert_{\alpha}]$ attains its minimum at the solution to $0=- (1-\alpha)\mathbb{P}(\varepsilon_i \leq t)+\alpha \mathbb{P}(\varepsilon_i > t)$, which is the $\alpha$ quantile of $\varepsilon_i$. Now since this minimum is obtained by setting $t=0$ it is clear that $G$ has a minimum value at $\beta^0$.

By chain rule $\nabla_{\beta}\hspace{0,1cm}g(y_i,X_i,\beta) = -X_i \vert \varepsilon_i - t(\beta) \vert_{\alpha}^{'}=X_i((1-\alpha)\mathbf{1}_{\{\varepsilon_i \leq t(\beta)\}}-\alpha \mathbf{1}_{\{\varepsilon_i > t(\beta)\}})$ and the $\triangle(\cdot,\beta^0)$ term in expansion (\ref{key expansion of g}) reads $\triangle(\varepsilon_i,X_i,\beta^0)=-X_i \vert \varepsilon_i\vert_{\alpha}^{'}$. Consequently, the covariance matrix of $\triangle(\cdot,\beta^0)$ equals $C_{\triangle}=\delta C$, where
\begin{equation*}
    \delta= \mathbb{E}\bigr[ (\vert \varepsilon_i\vert_{\alpha}^{'})^2\bigr]=  (1-\alpha)^2+\alpha^2
\end{equation*}

Moreover,
\begin{align*}
    \nabla_{\beta}\mathbb{E}\bigr[\hspace{0,1cm}g(y_i,X_i,\beta)\bigr]&=\mathbb{E}\bigr[\hspace{0,1cm}\nabla_{\beta}g(y_i,X_i,\beta)\bigr]\\
    &=\mathbb{E}\Bigr[X_i\mathbb{E}\bigr[(1-\alpha)\mathbf{1}_{\{\varepsilon_i \leq t(\beta)\}}-\alpha \mathbf{1}_{\{\varepsilon_i > t(\beta)\}}\vert X_i\bigr]\Bigr]\\
    &=\mathbb{E}\Bigr[X_i\bigr((1-\alpha)\mathbb{P}({\varepsilon_i \leq t(\beta)} \vert X_i)-\alpha(1-\mathbb{P}({\varepsilon_i \leq t(\beta)} \vert X_i))\bigr)\Bigr]\\
    &= \mathbb{E}\Bigr[X_i\bigr(\mathbb{P}({\varepsilon_i \leq t(\beta)} \vert X_i)-\alpha\bigr)\Bigr],
\end{align*}

Thus $\tilde{C}=\nabla^2\vert_{\beta=\beta^0}G(\beta)=f(0) C$. Also, for $C=\mathbb{I}_p$, and $\lambda_i:=\sqrt{\delta}\Phi^{-1}(1-iq/2p)$,  FDR($\hat{\boldsymbol{\beta}}_n$) is asymptotically controlled at level $q(\vert I_0 \vert/p)$. For the Median loss $\alpha=1/2$, we obtain $\lambda_i=\left(1/\sqrt{2}\right)\Phi^{-1}(1-iq/2p)$. Again, the result remains valid for a block diagonal $C=\mathbb{I}_{I_0}\oplus \Sigma$. 
\end{example}

\section{Discussion}
In this article we proposed a general theoretical framework for the asymptotic analysis of the pattern recovery and FDR control by SLOPE and its extensions. We mainly concentrated on the extensions to robust loss of functions, which are very important from the practical perspective. However, our proof techniques can be generalized to obtain similar results for applications of SLOPE to other important Generalized Linear Models, like the logistic or multinomial regressions, or the Gaussian Graphical models.  Another point for future interest is the analysis of algorithms combining the SLOPE penalty with $L_1$ or $L_2$ penalties. Our preliminary investigation suggests that combining the SLOPE and $L_2$ penalty allows to obtain FDR control under a wider set of scenarios than for SLOPE itself and we consider the investigation of the properties of this method as a very promising direction of research. Finally, our techniques can be extended for the analysis of adaptive versions of SLOPE. In \cite{ABSLOPE} SLOBE, the first version of the adaptive SLOPE, was constructed. It was illustrated that SLOBE can control FDR when predictors are strongly correlated. We expect that our techniques can be useful for the analysis of SLOBE properties as well as for construction of new adaptive versions of SLOPE, which apart from FDR control will also enhance the pattern recovery properties of SLOPE.

The asymptotic results presented in this paper are concerned with the classical asymptotics, where the model dimension $p$ is fixed and $n$ diverges to infinity. Our analysis shows that even  this classical setup deriving the results on the pattern convergence requires the development of new tools and substantially more care as compared to the analysis of the convergence of the vector of parameter estimates. We believe that the our analysis framework based on the set Hausdorff distance can be extended for the analysis of the SLOPE properties in the high dimensional setup and consider our work as an important first step in this direction.

\section{Acknowledgments}
The authors acknowledge the support of the Swedish Research Council, grant no. 2020-05081.
We would like to thank Elvezio Ronchetti and Micha{\l} Kos for the discussions on the robust versions of SLOPE and further Piotr Graczyk, Bartosz Ko{\l}odziejek, Tomasz Skalski, Patrick Tardivel and Ulrike Schneider for their helpful comments and discussions.

\bibliographystyle{plain}
\newpage
\bibliography{citation}

\newpage

\begin{appendix}
\section{Appendix}

\subsection{Weak pattern convergence is not trivial}\label{counterexample pattern convergence}
Consider the penalty $f(x)=\max\{x_1^2,x_2\}$ on $\mathbb{R}^2$, and let $\widehat{\boldsymbol{\beta}}_f = \operatorname{argmin_{\beta}}{G_n(\beta)+f_n(\beta)}$, with $f_n = n^{1/2}f$ and $G_n$ as in (\ref{G_n}) for the standard quadratic loss. Replicating Theorem \ref{sqrt-asymptotic}, 
\begin{align*}
    \widehat{\boldsymbol{u}}_{f,n}=\sqrt{n}(\widehat{\boldsymbol{\beta}}_f-\beta^0)=&\operatorname{argmin}u^{T}C_n u/2-u^{T}W_n+ n^{1/2}[f(\beta^0+u/\sqrt{n})-f(\beta^0)]\\
    \overset{d}{\longrightarrow}&\operatorname{argmin}{u^{T}Cu/2-u^{T}W+ f'({\beta^0};u)}=:\widehat{\boldsymbol{u}}_{f}.
\end{align*} 
For $\beta^0=0$, we have $n^{1/2}[f(\beta^0+u/\sqrt{n})-f(\beta^0)]=\max\{n^{-1/2}u_1^2,u_2\}=:g_n(u)$, and on the half line $\mathcal{K}=\{u_1>0, u_2=0\}$; the subdifferential $\partial g_n(u)=(2u_1/\sqrt{n},0)^T$ is zero-dimensional. We obtain
\begin{equation*}
    \mathbb{P}\left[\widehat{\boldsymbol{u}}_{f,n}\in \mathcal{K}\right]=\mathbb{P}\left[W_n\in \left\{C_n \begin{pmatrix} u_1 \\ 0 \end{pmatrix}+\begin{pmatrix} 2u_1/\sqrt{n} \\ 0 \end{pmatrix}:u_1>0\right\}\right]=0\hspace{0,2cm}\forall n,
\end{equation*} 
provided $W_n$ is absolutely continuous w.r.t. the Lebesque measure.
Furthermore, from (\ref{directional derivative equation}) we get $f'(0;u)=\max\{\langle(0, 0),(u_1,u_2)\rangle, \langle(0,1),(u_1,u_2)\rangle\}=\max\{0, u_2\}$, hence on $\mathcal{K}$ the subdifferential $\partial f'(0;u)=con\{(0,0)^T,(0,1)^T\}$ is one-dimensional. We get
\begin{equation*}
    \mathbb{P}\left[\widehat{\boldsymbol{u}}_{f}\in \mathcal{K}\right]=\mathbb{P}\left[W\in \left\{C \begin{pmatrix} u_1 \\ 0 \end{pmatrix}+\partial f'(0;u):u_1>0\right\}\right]>0,
\end{equation*}
since $C_{11}>0$. In particular, the SLOPE pattern $\mathbf{patt}(\widehat{\boldsymbol{u}}_{f,n})$ does not converge weakly to $\mathbf{patt}(\widehat{\boldsymbol{u}}_{f})$, despite the weak convergence of $\widehat{\boldsymbol{u}}_{f,n}$ to $\widehat{\boldsymbol{u}}_{f}$. 

Observe that $\widehat{\boldsymbol{u}}_{f,n}$ puts positive mass on the parabola $\{u_2=n^{-1/2}u_1\}$, where $\partial g_n(u)$ is one-dimensional, whereas $\widehat{\boldsymbol{u}}_{f}$ puts positive mass on the tangential space of the parabola at $0$ given by $\{u_2=0\}$. More precisely, the Lebesque decompositions of $\widehat{\boldsymbol{u}}_{f,n}$ and $\widehat{\boldsymbol{u}}_{f}$ w.r.t. the Lebesque measure yield different singular sets; the (flattening) parabola and the x-axis respectively. This gives some intuition for why linearity of the functions in the penalty $f=\max\{f_1,..,f_N\}$ is essential for convergence on convex cones. By applying the Portmanteau Lemma, we get weak convergence of $I(\widehat{\boldsymbol{u}}_{f,n})$ to $I(\widehat{\boldsymbol{u}}_{f})$ w.r.t. the general pattern as described in \ref{General penalty and pattern}. 

\subsection{Proximal operator}\label{proximal operator}
Similar to the SLOPE optimization problem, the minimization of $(\ref{V(u)})$ and $(\ref{V(u) general})$ can be solved using proximal methods. The proximal operator of $J'_{\boldsymbol{\lambda}}({\beta^0};u)$ is given by:
\begin{equation*}
    \text{prox}_{J_{\boldsymbol{\lambda},{\beta^0}}}(y):=\underset{u\in\mathbb{R}^p}{\operatorname{argmin }}\hspace{0.2cm} (1/2)\Vert u-y\Vert_2^2 + J'_{\boldsymbol{\lambda}}({\beta^0};u)
\end{equation*}


Let $\mathcal{I}(\beta^0)=\{I_0, I_1, ..,I_m\}$ be the partition of $\beta^0$ into the clusters of the same magnitude. The directional SLOPE derivative $J'_{\boldsymbol{\lambda}}({\beta^0};u)$ is separable:
\begin{equation*}
    J'_{\boldsymbol{\lambda}}({\beta^0};u) = J^{I_0}_{\boldsymbol{\lambda}}(u) + J^{I_1}_{\boldsymbol{\lambda},{\beta^0}}(u) + ... + J^{I_m}_{\boldsymbol{\lambda},{\beta^0}}(u),
\end{equation*} 
with
\begin{align*}
     J^{I_0}_{\boldsymbol{\lambda}}(u) & =\sum_{i\in I_0}\boldsymbol{\lambda}_{\pi(i)}\vert u_i\vert, \\
      J^{I_j}_{\boldsymbol{\lambda},{\beta^0}}(u) & =\sum_{i\in I_j}\boldsymbol{\lambda}_{\pi(i)}u_i sgn(\beta^0_i),
\end{align*}
where the permutation $\pi$ in $J'_{\boldsymbol{\lambda}}({\beta^0};u)$ sorts the limiting pattern of $u$ w.r.t. $\beta^0$, i.e; $\vert \mathfrak{p}_0 \vert_{\pi^{-1}(1)}\geq\dots\geq\vert \mathfrak{p}_0 \vert_{\pi^{-1}(p)}$, with $\mathfrak{p}_0=\mathbf{patt}_{\beta^0}(u)$.

 Hence 
 \begin{equation*}
     \text{prox}_{J_{\boldsymbol{\lambda},\beta^0}}(y)=\text{prox}_{J^{I_0}_{\boldsymbol{\lambda},{\beta^0}}}(y)\oplus\text{prox}_{J^{I_1}_{\boldsymbol{\lambda},{\beta^0}}}(y)\oplus\dots\oplus\text{prox}_{J^{I_m}_{\boldsymbol{\lambda},{\beta^0}}}(y)
 \end{equation*}
 
 Since we can treat each cluster separately, we can w.l.o.g. assume that $\beta^0$ consists of one cluster only. There are only two possible cases:
 
 In the first case, $\beta^0=0$ and the proximal operator is described in \cite{bogdan2015slope}:
\begin{align}
    \text{prox}_{J_{\boldsymbol{\lambda}}}(y)&=\underset{u\in\mathbb{R}^p}{\operatorname{argmin }}\hspace{0.2cm} (1/2)\Vert u-y\Vert_2^2 + J_{\boldsymbol{\lambda}}(u)\nonumber\\
   &=S_{y}\Pi_{y}\underset{\tilde{u}_1\geq\dots\geq\tilde{u}_p\geq0 }{\operatorname{argmin }}\hspace{0.2cm} (1/2)\Vert \tilde{u}-\vert y\vert_{(\cdot)}\Vert_2^2 + \sum_{i=1}^p\lambda_i\tilde{u}_i,\label{slope proximal operator optimization}
\end{align}
where $\vert y\vert_{(\cdot)}=\Pi^T_yS_{y}y$  arises by sorting the absolute values of $y$. (See Proposition 2.2 in \cite{bogdan2015slope} and notation in the section Subdifferential and Pattern.)

In the second case, $\beta^0$ consists of a single non zero cluster. In this case the penalty becomes $J'_{\boldsymbol{\lambda}}({\beta^0};u)=\sum_{i=1}^p \boldsymbol{\lambda}_{\pi(i)}(S_{\beta^0}u)_i=\sum_{i=1}^p \boldsymbol{\lambda}_{i}(S_{\beta^0}u)_{\pi^{-1}(i)}$, where $ (S_{\beta^0}u) _{\pi^{-1}(1)}\geq\dots\geq (S_{\beta^0}u) _{\pi^{-1}(p)}$. In particular, $J_{\boldsymbol{\lambda},\beta^0}(S_{\beta^0}u)=\sum_{i=1}^p \boldsymbol{\lambda}_{i}u_{\pi^{-1}(i)}=\sum_{i=1}^p \boldsymbol{\lambda}_{i}u_{(i)}$, with $ u _{(1)}\geq\dots\geq u _{(p)}$.

\begin{align}
    \text{prox}_{J_{\boldsymbol{\lambda},\beta^0}}(y)&=\underset{u\in\mathbb{R}^p}{\operatorname{argmin }}\hspace{0.2cm} (1/2)\Vert u-y\Vert_2^2 + J'_{\boldsymbol{\lambda}}({\beta^0};u)\nonumber\\
   & = S_{\beta^0}\underset{\tilde{u}\in\mathbb{R}^p }{\operatorname{argmin }}\hspace{0.2cm} (1/2)\Vert \tilde{u}- S_{\beta^0}y\Vert_2^2 + \sum_{i=1}^p\lambda_i\tilde{u}_{(i)}\nonumber\\
   & =  S_{\beta^0}\Pi\underset{\tilde{u}_1\geq\dots\geq\tilde{u}_p}{\operatorname{argmin }}\hspace{0.2cm} (1/2)\Vert \tilde{u}-(S_{\beta^0}y)_{(\cdot)}\Vert_2^2 + \sum_{i=1}^p\lambda_i\tilde{u}_i,\label{isotonic optimization}
\end{align}

where $(S_{\beta^0}y)_{(\cdot)}=\Pi ^T (S_{\beta^0}y)$ is the sorted \footnote{Note that the permutation matrix $\Pi$ depends both on $y$ and $\beta^0$. } $(S_{\beta^0}y)$ vector. The optimization problem in (\ref{isotonic optimization}) is very similar to the optimization problem in (\ref{slope proximal operator optimization}). The only difference is in the relaxed constraint, where the set of feasible solutions in (\ref{isotonic optimization}) allows for negative values. The optimization (\ref{isotonic optimization}) is a special case of the isotonic regression problem \cite{10.2307/2284712}:
\begin{align*}
\operatorname{minimize }\hspace{0.5cm} & \Vert x-z\Vert_2^2\nonumber \\
\text{subject to}\hspace{0.5cm} & x_1\geq\dots\geq x_p,
\end{align*}
where we set $z= (S_{\beta^0}y)_{(\cdot)} - \boldsymbol{\lambda} $.
\subsection{FDR constant}\label{appendix FDR} Let $Z\sim\mathcal{N}(0,\mathbb{I}_p)$ and $\gamma>0$. Consider the rejection procedure $\boldsymbol{\hat{\mathfrak{b}}}=\mathbf{patt}_{\beta^0}(\boldsymbol{\hat{u}})$, with 
\begin{align*}
    \boldsymbol{\hat{u}}&= \operatorname{argmin}_uu^{T}\gamma\mathbb{I}_p u/2-u^{T}\sigma Z+J'_{\boldsymbol{\lambda}}({\beta^0};u)\\
    &=\sqrt{\gamma}\operatorname{argmin}_{\tilde{u}}\tilde{u}^{T}\mathbb{I}_p \tilde{u}/2- \tilde{u}^{T}(\sigma/\sqrt{\gamma}) Z+J'_{\boldsymbol{(\lambda/\sqrt{\gamma}) }}({\beta^0};\tilde{u}),
\end{align*}
where $\tilde{u}=\sqrt{\gamma}u$. Therefore, if $\lambda_i=\sigma \lambda^{BH(q)}_{i}=\sigma \Phi^{-1}(1-iq/2p)$, we have $\text{FDR}(\boldsymbol{\hat{\mathfrak{b}}})\leq q(\vert I_0\vert /p)$, by Example \ref{example FDR control}. In particular, the penalties $\lambda_i$ do not depend on the constant $\gamma$.

\end{appendix}

\end{document}